\title{Variational Asymptotic Preserving Scheme for the Vlasov-Poisson-Fokker-Planck System}
\author{Jose A. Carrillo\thanks{Mathematical Institute, University of Oxford, Oxford OX2 6GG, UK. carrillo@maths.ox.ac.uk}, Li Wang\thanks{School of Mathematics, University of Minnesota, Twin cities, MN 55455. wang8818@umn.edu}, Wuzhe Xu \thanks{School of Mathematics, University of Minnesota, Twin cities, MN 55455. xu000355@umn.edu} and Ming Yan \thanks{Department of Computational Mathematics, Science and Engineering and Department of Mathematics, Michigan State University, East Lansing, MI 48824. myan@msu.edu}}
\date{}
\newcommand{\eps}{\varepsilon}
\newcommand{\half}{\frac{1}{2}}
\newcommand{\RR}{\mathbb{R}}
\newcommand{\rd}{\mathrm{d}}
\newcommand{\W}{\mathcal{W}}
\newcommand{\A}{\mathsf{A}}
\newcommand{\B}{\mathsf{B}}
\newcommand{\CC}{\mathsf{C}}
\newcommand{\M}{\mathsf{M}}
\newcommand{\D}{\mathsf{D}}
\newcommand{\PP}{\mathsf{P}}
\newcommand{\UU}{\mathsf{U}}
\newcommand{\I}{\mathsf{I}}
\newcommand{\HH}{\mathsf{H}}
\newcommand{\hHH}{\hat{\mathsf{H}}}
\newcommand{\prox}{\operatorname{prox}}
\newcommand{\bj}{\boldsymbol{j}}
\newcommand{\bv}{\boldsymbol{v}}
\newcommand{\bvj}{\boldsymbol{v_j}}
\newcommand{\argmin}{\operatornamewithlimits{argmin}}
\crefname{hypothesis}{Hypothesis}{Hypotheses}
\title{Variational Asymptotic Preserving Scheme for Vlasov-Poisson-Fokker-Planck system}
\author{Jose A. Carrillo\thanks{Mathematical Institute, University of Oxford, Oxford OX2 6GG, UK. carrillo@maths.ox.ac.uk} \and Li Wang\thanks{School of Mathematics, University of Minnesota, Twin cities, MN 55455. wang8818@umn.edu} \and Wuzhe Xu \thanks{School of Mathematics, University of Minnesota, Twin cities, MN 55455. xu000355@umn.edu} \and Ming Yan \thanks{Department of Computational Mathematics, Science and Engineering and Department of Mathematics, Michigan State University, East Lansing, MI 48824. myan@msu.edu}}
\newcommand*{\addFileDependency}[1]{
  \typeout{(#1)}
  \@addtofilelist{#1}
  \IfFileExists{#1}{}{\typeout{No file #1.}}
}
\begin{document}

\maketitle

\begin{abstract}
We design a variational asymptotic preserving scheme for the Vlasov-Poisson-Fokker-Planck system with the high field scaling, which describes the Brownian motion of a large system of particles in a surrounding bath. Our scheme builds on an implicit-explicit framework, wherein the stiff terms coming from the collision and field effects are solved implicitly while the convection terms are solved explicitly. To treat the implicit part, we propose a variational approach by viewing it as a Wasserstein gradient flow of the relative entropy and solve it via a proximal quasi-Newton method. In so doing, we get positivity and asymptotic preservation for free. The method is also massively parallelizable and thus suitable for high dimensional problems. We further show that the convergence of our implicit solver is uniform across different scales. A suite of numerical examples are presented at the end to validate the performance of the proposed scheme. 
\end{abstract}

\section{Introduction}
The kinetic description of a gas of charged particles interacting through a mean electrostatic field created by their spatial distribution can be described by the Vlasov-Poisson-Fokker-Planck (VPFP) system: 
\begin{subequations}\label{vpfp00}
\begin{align}
    &{\partial_{t} f+v \cdot \nabla_{x} f-\frac{q}{m_e} \nabla_{x} \phi \cdot \nabla_{v} f=\frac{1}{\tau_e}}  \nabla_{v} \cdot\left(v f+ \mu_e \nabla_{v} f \right),  \\
    &{-\triangle_{x} \phi = \frac{q}{\epsilon_0}\left( \rho-h\right) }. \label{vpfp002}
\end{align}
\end{subequations}
Here $f(t,x,v)$ is the distribution function of particles at $t \in \mathbb{R}_{+}$, position $x \in \mathbb{R}^d$, and with velocity $v \in \mathbb{R}^d$. $\rho(t,x)$ is the density of electrons
\begin{equation}
   \rho(t,x)=\int_{\RR^d} f(t,x,v)  \rd v ,
\end{equation}
and $\phi(t,x)$ is the potential of electrostatic field obtained self consistently through the Poisson equation \eqref{vpfp002}. $h(x)$ is the density of positive background charges that satisfies global neutrality relation:
\[
\int_{\mathbb{R}^{d}} \int_{\mathbb{R}^{d}} f(0,x, v) d x d v=\int_{\mathbb{R}^{d}} h(x) \rd x .
\]
The constants $q$, $m_e$, $\epsilon_0$ and $\tau_e$ represent the elementary charge, electron mass, vacuum permittivity, and relaxation time, respectively. $\sqrt{\mu_e} = \sqrt{\frac{k_B T_{th}}{m_e}}$ is the thermal velocity, with $k_B$ being the Boltzmann constant and $T_{th}$ the temperature of the bath. The Fokker-Planck term on the right hand side of \eqref{vpfp00} represents the interaction of particles with background as a thermal bath. 

There has been a vast literature on the analytical aspect of the VPFP system. Existence and uniqueness results have been obtained in several frameworks: the existence of classical solutions was obtained by Victory and O'Dwyer in \cite{VHO90} locally in time and Weckler and Rein \cite{rein92} globally in time. Bouchut \cite{Bouchut91, Bouchut95} also gave an existence and uniqueness result in three dimensions for strong and global in time solution. In the more general setting of weak solutions, Carrillo and Soler allowed initial data in $L^p$ space \cite{CS95} and Morrey space \cite{CS97} and proved the existence of locally in time weak solution. Zheng and Majda obtained the existence of global measure solutions in one dimension \cite{ZM94}. 
The investigation of the quantitative properties of this system, especially its long time behavior, has also been adequate. Among them, we refer to the paper by Bouchut and Dolbeault \cite{bouchut1995long} and reference therein for the strong convergence to the unique stationary solution of the Cauchy problem via compactness argument, the one by Carrillo, Soler and Vazquez \cite{CSV96} on the asymptotic behavior of the frictionless case by similarity argument, and the one by Bonilla, Carrillo, and Soler \cite{BCS97} for the initial boundary value problem.

To study the physical behavior of the VPFP system, two important quantities are considered. One is the mean free path $l_e = \sqrt{\mu_e} \tau_e$, which is the average distance traveled by a particle between two successive collisions, and the other is the Debye length $\Lambda = \sqrt{\frac{\epsilon_0 k_B T_{th}}{q^2 \mathcal N}}$, where $\mathcal N$ denotes the concentration of the particles. When the mean free path of the electrons is much smaller than the Debye length, \eqref{vpfp00} can be rewritten in the following dimensionless form:
\begin{subequations} \label{vpfp}
\begin{align}
    &{\partial_{t} f+v \cdot \nabla_{x} f-\frac{1}{\eps} \nabla_{x} \phi \cdot \nabla_{v} f=\frac{1}{\eps}}  \nabla_{v} \cdot\left(v f+ \nabla_{v} f \right), \\
    &{-\triangle_{x} \phi =\rho -h}\,,
\end{align}
\end{subequations}
where $\eps = (\frac{l_e}{\Lambda})^2$. See \cite{ACGS99} for more details about the asymptotic limits. Sending $\eps \rightarrow 0$, we arrive at the so-called high field limit  
\begin{align}
&\partial_{t} \rho-\nabla_{x} \cdot\left(\rho \nabla_{x} \phi \right)=0, \label{rho_limit} \\
&{-\triangle_{x} \phi =\rho -h}\,, \nonumber
\end{align}
which is a nonlinear convection equation for mass density $\rho$. Indeed, one can first integrate equation (1.1a) w.r.t $v$ to get 
\begin{equation} \label{conser}
    \partial_t \rho + \nabla_x \cdot J = 0,
\end{equation}
where $\displaystyle J = \int_{\RR^d} v f(t,x,v) \rd v$. Then multiplying (1.1a) by $v$ and integrating w.r.t $v$, one obtains
\begin{equation} \label{contin}
    \eps (\partial_t J + \nabla_x \cdot Q) + \rho \nabla_x \phi +J = 0,
\end{equation}
where $\displaystyle Q = \int_{\RR^d} v \otimes v f(t,x,v) \rd v $. In the limit of $\eps \rightarrow 0 $, \eqref{contin} leads to 
$J = - \rho \nabla_x \phi $. Then \eqref{rho_limit} comes from plugging the above relation into \eqref{conser}. See $\cite{CGJS00, nieto2001high, goudon2005multidimensional, Poupaud92}$ for a physical and rigorous derivation of this limit, as well as the well-posedness of the limiting system.  

Numerically solving the VPFP system \eqref{vpfp00} shares the same difficulty as most of the kinetic equation: high dimensionality. Several methods have been developed, such as \cite{HV96, HV98, WO05, DA17}, to name a few. These methods are either deterministic or stochastic, with an effort in capturing some physical phenomena associated with the Vlasov-Poisson system such as Landau damping when the diffusion effect is rather weak. However, in the high field scaling we consider here, additional challenge comes from the stiffness of the field and collision terms, which generally calls for a resolved spatial and temporal discretization that can be very expensive.
Asymptotic preserving method, which aims at treating the stiff system and preserving its corresponding asymptotic limit at the discrete level, provides a unified solver to mutiscale problems. See \cite{Jin10, HJL17} for a review. 
In the specific context of VPFP system with high field scaling, we mention two particular methods. One is developed by Jin and Wang \cite{jin2011asymptotic} based on an implicit-explicit (IMEX) time discretization with a finite difference method in space and velocity, and the other is a quadrature-based moment closure method by Cheng and Rossmanith \cite{CR14}.

In this paper, we intend to design a new asymptotic preserving method for VPFP system in the same vein as \cite{jin2011asymptotic} but with marked difference. In particular, similar to \cite{jin2011asymptotic}, we group the stiff field and collision terms into one spatially dependent Fokker-Planck type operator and solve it implicitly, while treating the rest non-stiff terms explicitly. However, unlike the direct iterative solver (e.g., conjugate gradient or GMRES) employed in \cite{jin2011asymptotic} for the implicit part, here we propose a variational approach. This is hinted by the fact that the stiff term can be viewed as a Wasserstein gradient flow of the relative entropy with respect to the local Maxwellian, and therefore can be solved with the Jordan-Kinderlehrer-Otto (JKO) scheme \cite{jordan1998variational}. It then remains to solve the resulting optimization problem, for which we propose a proximal quasi-Newton method. The reason is that, when $\eps$ is small or the magnitude of $f$ varies significantly, the gradient type optimization methods experience a deteriorative convergence. Therefore, we design a pre-conditioner that uses partial second order information. As a result, our method is not only {\it asymptotic preserving} in the sense that we allow for unresolved spatial, temporal, and velocity discretization to capture the correct high field limit, but also the resulting implicit system solver enjoys a {\it uniform convergence}. This is an important issue that has not been emphasized in the literature. We also point out that, the variational formulation together with the JKO scheme, offers a natural implicit treatment for the collision term that also mimics the real physical process (i.e., entropy decrease), and therefore provides a desirable addition to the current family of asymptotic preserving (AP) schemes for kinetic equation. Moreover, its parallelizability makes it very appealing for high dimensional problems.

The rest of the paper is organized as follows. In the next section, we recall the implicit-explicit treatment for $\eqref{vpfp}$, which can be split into three steps: an explicit convection step, a Poisson solver, and an implicit collision step. We then put emphasize on the implicit collision solver by first introducing the variational formulation and then proposing the corresponding Newton-type optimization solver.  In section 3, we examine the properties of the proposed method, including positivity, asymptotic preservation and uniform convergence. Section 4 is devoted to numerous numerical examples, which validates the efficiency of our method as well as the aforementioned properties. The paper is concluded in Section 5.

\section{Numerical method}
In this section, we provide a detailed derivation of our numerical scheme, including temporal and spatial discretization, along with the optimization algorithm for inverting the implicit algebraic system. Throughout the paper, we consider one dimension in space and $d$-dimension ($d = 1, 2, 3$) in velocity. We also restrict ourselves to periodic boundary contion in $x$, and Fourier spectral method is adopted in solving the Poisson equation. As will be explained below, the spatial and velocity treatments are decoupled in the Vlasov-Fokker-Planck equation, therefore extending to higher dimension in space is straightforward and will not introduce substantial additional computational cost if the algorithm is parallelized. 

To be more specific, let $\Omega_x = [-L_x,L_x]$ be the spatial domain, and we partition it into $N_x$ uniform cells with $\Delta x = \frac{2 L_x}{N_x}$, and denote each grid point by $x_i = -L_x+i \Delta x$, $1 \leq i \leq N_x$. Likewise, we denote $\Omega_v = [-L_v, L_v]^d$ as the velocity domain, and evenly partition it into $N_v$ pieces in each dimension with $\Delta v = \frac{2L_v}{N_v}$. Then the velocity grid point is denoted as $v_{j_k} = -L_v +(j_k-\half) \Delta v$ with $1\leq j_k \leq N_v$ , $1\leq k \leq d$. Let $\tau$ be the time step, then $t^n = n \tau$, $n \geq 0$. Hence $f^n_{i, \bj}$ represents the approximation of $f (t_n, x_i, \bvj)$, where $\bj = \{j_1, \cdots, j_d\}$. We always use zero flux boundary condition in $v$, i.e.,  $( (v+\nabla_x \phi)f+\nabla_v f) \cdot \nu =0$, where $\nu$ is the outer normal direction for $\Omega_v$.

\subsection{Implicit-Explicit scheme}
As is done in~\cite{jin2011asymptotic}, we group the stiff terms in~\eqref{vpfp} into one spatially dependent Fokker-Planck type operator, treat it implicitly, and solve the rest non-stiff parts explicitly. Therefore, we have the following semi-discrete scheme: 
\begin{subequations} 
\begin{align*}
    &{\frac{f^{n+1}- f^n}{\tau}+v \cdot \nabla_{x} f^n = \frac{1}{\eps}}  \nabla_{v} \cdot\left((v+ \nabla_x \phi ) f+ \nabla_{v} f \right)^{n+1},  \\
    &{-\triangle_{x} \phi^{n+1}=\rho^{n+1} -h}.
\end{align*}
\end{subequations}
To implement, we note that the above semi-discretization scheme can be split into three steps without introducing the splitting error. 

\begin{description}
  \item[Step 1: Explicit transport step] 
\end{description}
 We first get an intermediate stage $f^*$ from the transport step $f^* = f^n - \tau v \cdot \nabla_x f^n $, 
where the spatial discretization is conducted via the MUSCL scheme:
\begin{equation*}
f^*_{i,\bj}=f^n_{i,\bj}  + \frac{\tau}{\Delta x } v_{\bj}( f_{i+\frac{1}{2}, \bj}-f_{i-\frac{1}{2}, \bj} )\,.
\end{equation*}
Here the flux is taken as 
\begin{align*}
f_{i+\frac{1}{2}, \bj} = & \max(v_{\bj},0) \left( f_{i, \bj}+\frac{1}{2} \phi\left(\theta_{i+\frac{1}{2},\bj}\right)\left(f_{i+1, \bj}-f_{i, \bj}\right) \right)
\\ &  + \min(v_{\bj},0)\left( f_{i+1, \bj}+\frac{1}{2} \phi\left(\theta_{i+\frac{1}{2},\bj}\right)\left(f_{i+1, \bj}-f_{i, \bj}\right) \right) \,,
\end{align*}
where $\theta_{i+\frac{1}{2},\bj} = \frac{f_{i, \bj}-f_{i-1, \bj}}{f_{i+1, \bj}-f_{i, \bj}}$ is the smoothness indicator function, and we choose the minmod slope limiter $\phi(\theta)=\max \{0, \min \{1, \theta\}\}$.

\begin{description}
  \item[Step 2: Poisson Step]
 \end{description}
After obtaining $f^*$, we get $\rho^*$ by integrating $f^*$ over $\bv$, for which we can use a simple midpoint rule: $\rho_i^* = \sum_{\bj}f^*_{i,\bj} (\Delta v)^d$. We then solve for $\phi^*$ via the Fourier based spectral method, and then get $\nabla_x \phi^*$.

\begin{description}
  \item[Step 3: Implicit collision step]
\end{description}
First note that the mass is conserved in the collision step, thus $\rho^{n+1} = \rho^*$ and $\phi^{n+1} = \phi^*$. Then for each $x_i$, we have
\begin{equation} \label{eqn315}
\frac{f^{n+1}_i-f^*_i}{\tau} = \frac{1}{\eps} \nabla_{v} \cdot ((v+\nabla_x \phi^*_i) f_i^{n+1}+\nabla_v f_i^{n+1}) \,,
\end{equation}
which will be solved by the variational method described blow.

\subsection{Variational formulation}
This section is devoted to the development of a variational numerical scheme for the implicit collision step. First, we would like to mention that there exist quite a few methods for discretizing the Fokker-Planck operator, such as Chang-Cooper scheme \cite{buet2010chang}, Scharfetter-Gummel discretization \cite{schlichting2020scharfetter}, and squareroot approximation \cite{heida2018convergences}. Among them, some are known to preserve positivity and dissipate entropy, two properties for the continuum equation that are desirable to be preserved at the discrete level. Here we intend to provide a different approach to address the stiffness issue. In particular, when $\varepsilon$ is small, a generic time implicit scheme would lead to a linear system that is ill-conditioned. Our variational scheme induces a natural way of building preconditioners  arising from an optimization method and efficiently resolves the ill-condition issue.  As a result, our method not only enjoys positivity preserving and entropy dissipating, but also is asymptotic preserving and uniformly efficient. Other smart preconditioners can also be devised for classical methods to avoid stiffness.

Let
\begin{equation} \label{eqn:Mstar}
M_i^* = \frac{\rho_i^*}{(\sqrt{2\pi})^d} e^{-\frac{|v + (\nabla_x \phi)^*_i|^2}{2}}
\end{equation}
be the local Maxwellian, then \eqref{eqn315} can be rewritten as 
\begin{equation} \label{eqn316}
\frac{f_i^{n+1} - f_i^*}{\tau} = \frac{1}{\eps} \nabla_v \cdot \left(f_i^{n+1} \nabla_v \ln \left(\frac{f_i^{n+1}}{M_i^*}\right)\right) = \frac{1}{\eps} \nabla_v \cdot \left(f_i^{n+1} \nabla_v \frac{\delta E(f_i^{n+1}|M^*)}{ \delta f_i^{n+1}}\right)\,,
\end{equation}
where $E(f|M) = \int_{\RR^d} f \ln (\frac{f}{M}) \rd v$ is the relative entropy of $f$ with respect to $M$, and $\frac{\delta E}{\delta f}$ denotes the first variation of $E$ in $f$. In view of \eqref{eqn316}, it can be considered as the gradient flow of the relative entropy in the Wasserstein metric, i.e., 
\[
\frac{f_i^{n+1} - f_i^*}{\tau} = - \frac{1}{\eps}\nabla_{d_{\W}} E(f_i^{n+1}|M_i^*)\,,
\] 
which can be solved via the celebrated JKO scheme \cite{jordan1998variational}. That is, $f_i^{n+1}$ is obtained to minimize the following functional 
\begin{equation} \label{eqn317}
f_i^{n+1} \in \argmin_{f_i \in \mathcal P_{ac} (\Omega_v)} ~ \left\{ \half  d_{\W}( f_i, f^*_i)^{2} + \frac{\tau}{\eps} E(f_i|M^*_i)\right\},
\end{equation}
where $d_{\W}( f_i, f^*_i)$ is the Wasserstein distance between $f_i$ and $f^*_i$, and $\mathcal P_{ac} (\Omega_v)$ is the set of probability measures on $\Omega_v$ that are absolutely continuous with respect to Lebesgue measure.
The formulation~\eqref{eqn317} has attracted a lot of attention on the analytical level as it provides a natural choice for $f^{n+1}$ that decreases the relative entropy, i.e., $E(f^{n+1}_i |M^*_i) \leq E(f^{n}_i |M^*_i) $. However, when it comes to numerical implementation, the computation of the Wasserstein distance constitutes a major obstacle. Only recent advances in this regard have helped to make this formulation numerically accessible, see \cite{Peyrebook19} and reference therein. In this paper, we will adopt the dynamic formulation by Benamou and Brenier \cite{benamou2000computational} and its fully discrete version \cite{carrillo2019primal}. In particular, we can reframe the Wasserstein distance into a convex optimization subject to linear constraints:
\begin{equation}\label{wassdic}
d_{\W}( f_0, f_1)^{2}=\min _{(f, m) \in \mathcal{C}_{1}} \int_{0}^{1} \int_{\Omega_v} \Phi(f( t,v), \|m(t,v)\|) \mathrm{d} v \mathrm{d} t ,
\end{equation}
where
\begin{equation}
\Phi(f, \|m\|)=\left\{\begin{array}{cl}{\frac{\|m\|^{2}}{f}} & {\text { if } f>0,} \\ {0} & {\text { if }(f, m)=(0,0),} \\ {+\infty} & {\text { otherwise }.}\end{array}\right. \nonumber
\end{equation}
and the constraint set $\mathcal C_1$ consists of 
\begin{align*}
&\partial_{t} f+\nabla_v \cdot m =0 \text { on } \Omega_v \times[0,1], \quad  m \cdot \nu =0 \text { on } \partial \Omega_v \times[0,1],  \\
& f(\cdot, 0)=f_{0}, \quad f(\cdot, 1) =f_1 \text { on } \Omega_v\,,
\end{align*}
where $\nu$ is the outer normal direction of $\Omega_v$. 

Plugging \eqref{wassdic} into \eqref{eqn317}, we arrive at the following constrained optimization problem: given $M^*(v)$ and $f^*(v)$, one obtains $f^{n+1}(v) = f(1, v)$ with $f(t,v)$ solving
\begin{equation} \label{dynJKO}
 \min_{(f, m) \in \mathcal C} \left\{\eps \int_0^1  \int_{\Omega_v} \Phi(f, \| m \|) \rd v \rd t +2 \tau  E(f(1,v)|M(v)) \right\}, 
 \end{equation}
where the constraint set $\mathcal C$ is
\begin{equation} \label{constraints}
 \partial_{t} f+\nabla_v \cdot m =0 \text { on } \Omega_v \times[0,1],~ m \cdot \nu =0 \text { on } \partial \Omega_v \times[0,1], ~f(0,v)  =f^*(v) \text { on } \Omega_v .
\end{equation}
Here the subscript $i$ is omitted as this step is independent of $x$. Note the difference between constraints $\mathcal C$ and $\mathcal C_1$ is that in $\mathcal C_1$, we do not know $f(1,v)$ a priori, and it is in fact coming from solving the optimization \eqref{dynJKO}, which is similar to an optimal control problem. 

We further write down the fully discrete form for \eqref{dynJKO} and \eqref{constraints}. Denote $f=[f_{\bj}]^{\intercal} \in \mathbb{R}^{d N_v}$ and $m = [m_1; \cdots; m_d] \in \mathbb{R}^{d N_v \times d } $, where $m_l = [m_{l,\bj}]^{\intercal} \in \mathbb{R}^{d N_v}$. Then $\|m\|_{\bj}^2 = \sum_{l=1}^d m_{l,\bj}^2$. The fully discrete JKO scheme now writes: 
\begin{align}
f^{n+1}_{\bj} \in & \arg \min_{f, m} \left\{ \sum_{\bj}  \left( \eps \Phi(f_{\bj},\|m\|_{\bj}) + 2 \tau f_{\bj} \ln\left(\frac{f_{\bj}}{M^*_{\bj}}\right) \right)  \Delta v^d \right\},  \label{disc1}
\\  & \text { s.t. } \quad  f_{\bj}-f^*_{\bj} + \sum_{l=1}^d \D_{\bv, l} m_{l, \bj} = 0, \qquad m_{l, \bj} \cdot \nu |_{\partial \Omega}  =0 \,,  \label{disc2}
\end{align}
where $\D_{\bv, l}$ is a discrete representation of the divergence that will be detailed later. 
Note that the PDE constraint in \eqref{constraints} is discretized only in time step, which has been pointed out in \cite{li2020fisher} (Theorem 3) that it will significantly reduce the dimension of the problem while maintaining the first order accuracy in $\tau$. Indeed, if we discretize the auxiliary inner time derivative in \eqref{constraints} with $N_t$ nodes, then the unknown $f$ would be of size $dN_v \times N_t$, and $m$ is of size $dN_v \times N_t \times d$. Here we choose $N_t=1$ and thus keeps the size of $f$ and $m$ to a minimum. 

To facilitate the explanation later, we let 
$
u = [f;m]
$
and rewrite \eqref{disc1}-\eqref{disc2} into 
\begin{align}
\min_{u}~ F(u):=
\sum_{\bj} \left( \eps \Phi(f_{\bj}, \|m\|_{\bj}) + 2\tau f_{\bj} \ln{\left(\frac{f_{\bj}}{M^*_{\bj}}\right)} \right) \Delta v^d  \quad \text{s.t.} \quad \A u=b,  \label{primal}
\end{align}
where $\A:=\begin{pmatrix}
\I_{d{N_v} \times d{N_v}} & \A_m 
\end{pmatrix}$ and $b:=f^*$. Note that all the operations here are element-wise. Here $\A_m$ gives a discretized divergence $\D_{\bv,l} m_{l,\bj}$, which satisfies the zero flux boundary condition. For instance, we use the center difference here. Then in one dimension, the boundary grid points are $v_{\half} = -L_v$ and $v_{N_v+\half}= L_v$, and the boundary condition becomes $0=m_{\half} = \frac{m_0+m_1}{2}$, which implies $m_0 = -m_1$. Then $D_v m_1 = \frac{m_2-m_0}{2 \Delta v} = \frac{m_2+m_1}{2 \Delta v}$. Extension to higher dimension is straightforward. As an example, we give $\A$ for $d=3$. We denote 
\[
\A = 
\begin{pmatrix}
\I &\A_1 &\A_2 &\A_3
\end{pmatrix}.
\]
where $\A_1,~\A_2,~\A_3$ represent the discritizations of $D_{v_1},~D_{v_2}~,D_{v_3}$ respectively. Define
\[
\D_v = 
\frac{1}{2 \Delta v}
\begin{pmatrix}
1 & 1  \\
-1 &0 & 1  \\
 & \ddots & \ddots & \ddots\\
 & & -1 & 0 &1 \\
 & & &-1 & -1
\end{pmatrix}
\in \mathbb{R}^{N_v \times N_v} .
\]
Denote Kronecker tensor product as $\otimes$, then we have
\[
\A_1 = \I_{N_v^2} \otimes \D_v,\quad \A_2 = \I_{N_v} \otimes \left( \D_v \otimes \I_{N_v} \right), \quad \A_3 =  \D_v \otimes \I_{N_v^2} .
\]

\subsection{Proximal quasi-Newton method}
In this subsection, we introduce the proximal quasi-Newton type method. First of all, we rewrite $\eqref{primal}$ as an unconstrained problem by using the following indicator function: 
\[
\chi(u) = \left\{ 
\begin{array}{cl}
    0, & \mbox{ if } \A u =b, \\
    +\infty, & \text{otherwise.}
\end{array} \right.
\]
Then $\eqref{primal}$ becomes
\begin{equation}\label{twof}
\min_{u}  F(u) + \chi (u)\,,
\end{equation}
where $F(u)$ is defined in \eqref{primal}. As written, $F(u)$ is a convex but nonsmooth function of $u$, and therefore a proximal type of algorithm is needed, as stated in \cite{carrillo2019primal}. However, in our specific case considered here, a simplification can be made. In fact, as shown in \cite[ Theorem 5.1]{jordan1998variational}, the minimizer of \eqref{primal} converges to the unique {\it positive} solution of the equation
\begin{equation} \label{eqn320}
\partial_t f= \frac{1}{\eps} \nabla_v \cdot ((v+ \nabla_x \phi) f + \nabla_v f)
\end{equation}
when $\tau \rightarrow 0$. Moreover, 
thanks to Lemma 8.6 in $\cite{santambrogio2015optimal}$ and mass conservation, the strict positivity of $f_{\bj}^n$ can be established as long as initially $f^0$ is non-negative $f^0_{\bj}\geq 0$, and has strictly positive initial mass, i.e. $\sum_{\bj} f_{\bj}^0 >0$.
Therefore, we can simplify $F(u)$ as:
\[
F(u) = \sum_{\bj} \left( \eps \frac{\| m_{\bj} \|^2}{f_{\bj}} +2 \tau f_{\bj} \ln\left(\frac{f_{\bj}}{M_{\bj}}\right) \right) \Delta v^d \,,
\]
which is now a smooth function in $u$, and hence gives access to the second order information that could significantly accelerate the convergence. 

Below we first state our algorithm, and then we explain the reasons for this choice. Here the stepsize $\gamma>0$ is chosen such that $f^k \succ 0$ for every iteration.

\begin{algorithm}[H]
\caption{Proximal quasi-Newton algorithm for \eqref{eqn317}}\label{alg:PN}\begin{algorithmic}
\REQUIRE {$u^{(0)} = [f^*;m^*]$ with $m^*\equiv 0$, the maximum iteration number ($N_{\textrm{max}}$), step-size $\gamma > 0$}
\WHILE{$k \leq N_{\textrm{max}}$}
\REPEAT
\STATE {
 $ \text{1. Compute } (\HH^k)_{i,j} = \begin{cases}
      (\nabla^2 F(u^{(k)})_{i,i}, & \text{if}\ i=j ,\\
      0, & \text{otherwise}.
    \end{cases}$ }
\STATE {$\text{2. Update } u^{(k+1)} = \prox_{\chi}^{\HH^k} (u^{(k)}- \gamma (\HH^k)^{-1} \nabla F(u^{(k)}))$.} 
\UNTIL{$\text{stopping criteria achieved}$}
\STATE {$u^{(\infty)} = u^{(k+1)}$.}
\ENDWHILE
\RETURN {$u^{n+1} = u^{(\infty)}= [f^{(\infty)}; m^{(\infty)}]$.}
\end{algorithmic}\end{algorithm}
There are two reasons for choosing this algorithm. One reason is due to the appearance of $\eps$ and small values of $f^n_{i, \bj}$. When $\eps$ is small or the magnitude of $f^n_{i, \bj}$ varies largely, the convergence of any optimization algorithm that only uses first order information will converge very slowly. This is because the Hessian of $F(u)$ becomes ill-conditioned in these scenarios. Therefore, using the second order information as in our algorithm would significantly improve the convergence rate (See Theorem \ref{convThm} and Remarks \ref{remark1} $\&$ \ref{remark2}). More importantly, although the stiffness introduced by small $\eps$ has been handled by the implicit JKO scheme and therefore enjoys the AP property---it allows for under-resolved mesh sizes and captures the correct asymptotic limit, it still comes with another difficulty which renders a direct implicit solver converging {\it non-uniformly}. The proximal quasi-Newton method we proposed here overcomes this difficulty. Another reason is that, it is well-known that computing the Hessian is expensive and often results in a dense matrix, which poses additional computational cost especially when the dimension is high. Instead, we only use the diagonal information of the Hessian as a surrogate, which is shown to still serve the purpose of accelerating the convergence while maintaining the sparsity of the matrix. 

Next we show how to compute the scaled proximal operator $z=\operatorname{prox}^{\HH}_{ \chi}(u)$, which can be obtained from a closed-form formula in our specific case. First, the definition of the scaled proximal operator is:
\[
z =  \operatorname{prox}^{\HH}_{ \chi}(u) \in \argmin_z~\chi (z)+\frac{1}{2}\|z-u\|_{\HH}^{2}=\argmin_{z:\A z=b} \frac{1}{2}\|z-u\|_{\HH}^{2}.
\]
Its corresponding Lagrangian is:
$L(z,\lambda) = \frac{1}{2} \|z-u \|_{\HH}^2+  \lambda^{\intercal} (b-\A z).$
Then optimality condition gives 
$\frac{\partial L}{\partial z} =  \HH(z-u) - \A^{\intercal} \lambda = 0.$
Hence
\begin{equation}\label{opt}
   z=u + \HH^{-1} \A^{\intercal} \lambda .
\end{equation}
By the primal feasibility, i.e. $\A z = b$, and $\eqref{opt}$, we get

\begin{equation*}
\lambda  =  (\A \HH ^{-1} \A^{\intercal})^{-1}(b - \A u),
\end{equation*}
which gives the closed-form formula for $\operatorname{prox}_{ \chi}^{\HH}$:
\begin{equation}\label{proxchi}
z=\operatorname{prox}_{ \chi}^{\HH}(u)  = u +   \HH ^{-1} \A^{\intercal}(\A \HH ^{-1} \A^{\intercal})^{-1}(b - \A u)  .
\end{equation}
In practice, computing the inverse of a matrix can be expensive. In our formula \eqref{proxchi}, there are two inverse of matrix, $\HH^{-1}$ and $(\A \HH^{-1} \A^{\intercal})^{-1}$. Computing $\HH^{-1}$ is trivial as it is diagonal, whereas computing $(\A \HH^{-1} \A^{\intercal})^{-1}$ might be time-consuming. However, due to the special structure of $\A$ and $\HH$, there exists fast methods. Indeed, to clearly illustrate the idea, consider 1D case and the diagonal matrix $\HH$ of the form
\[
\HH = 
\begin{pmatrix}
\HH_1 & 0\\
0 & \HH_2
\end{pmatrix}.
\]
By definition of $\A = [\I~\D]$, we get $\A \HH^{-1} \A^{\intercal} = \HH_1^{-1} + \D \HH_2^{-1} \D^{\intercal}$. Note that $\HH_2$ is diagonal and hence $\D \HH_2 \D^{\intercal} $ is just a weighted Laplacian, which can be efficiently inverted by fast algorithms such as multigrid method, see for instance $\cite{capizzano2010note}$.

Alternately, instead of fixing the stepsize $\gamma$ in Algorithm 1, we can also use a line search technique, and the algorithm is summarized in Algorithm 2. 
\begin{algorithm}[H]
\caption{Proximal quasi-Newton algorithm with line search for \eqref{eqn317}}\label{alg:PNS}
\begin{algorithmic}
\REQUIRE {$u^{(0)} = [f^*;m^*]$ with $m^* \equiv 0$, $0<\theta < \half$, \text {the maximum iteration Number} ($N_{\textrm{max}}$)}
\STATE {Let $k=0$}
\WHILE{$k \leq N_{\textrm{max}}$}
\REPEAT
\STATE {
 $ \text{1. Compute } (\HH^k)_{i,j} = \begin{cases}
      (\nabla^2 F(u^{(k)}))_{i,i}, & \text{ if}\ i=j, \\
      0, & \text{otherwise}.
    \end{cases}$ }
\STATE {$\text{2. Line search: let } t^l =1,  v^k=  \prox_{\chi}^{\HH^k} (u^{(k)}- (\HH^k)^{-1} \nabla F(u^{(k)})) -u^{(k)}$.} 

\WHILE{$F(u^{(k)}+t^l  v^k) > F(u^{(k)}) +  t^l \theta (\nabla F(u^{(k)}))^{\intercal} v^k  \text{ and } \min_{j} f^{(k)}_j <0$}
\STATE {$t^l = \half t^l$.}
\ENDWHILE
\STATE {$u^{(k+1)} = u^{(k)}+t^l v^k $.}
\UNTIL{$\text{stopping criteria achieved}$}
\STATE {$u^{(\infty)} = u^{(k+1)}$.}
\ENDWHILE
\RETURN {$u^{n+1} = u^{(\infty)}= [f^{(\infty)}; m^{(\infty)}]$.}
\end{algorithmic}\end{algorithm}
The advantages of the line search are obvious. First, the search step automatically preserves the positivity of $f_j^{(k)}$. Second, it often needs less steps to converge, see the numerical examples in Section 4. In addition, Algorithm 2 falls into the category of proximal Newton-type methods in \cite{lee2014proximal}, for which it is proven that if $\{ \HH^k \}$ are uniformly positive definite, i.e. $s \I \preceq \HH^k $ uniformly for $ s>0$, then for a closed, convex objective function whose infimum can be attained, $\{ u^{(k)} \}$ generated by Algorithm 2 is guaranteed to converge to the optimal point. 
In our numerical examples, we observe that $t^l = 1$ after sufficiently many iterations, and therefore we see superlinear convergence at the neighborhood of the optimal point (see Fig.\ref{fig:PN_1D}).

\section{Properties}
In this section, we study some properties of the numerical scheme. We firstly focus on the convergence behavior of the Newton type method, and then examine the properties of the entire solver, including positivity and asymptotic preserving property.

\subsection{Convergence of the proximal Newton type method}
We mainly focus on the convergence behavior of Algorithm~\ref{alg:PN} in this subsection. We first examine the convexity of $F$ in the following lemma. 

\begin{lemma}
$F(u)$ is strictly convex, i.e. $\nabla^2 F(u) \succ 0$ if $f \succ 0$ for any $d = 1,2,3$.
\end{lemma}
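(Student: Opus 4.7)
The plan is to exploit the fact that $F$ separates completely across the velocity grid points, so strict convexity reduces to strict convexity of each summand
\[
g_{\bj}(f_{\bj}, m_{\bj}) = \eps\,\frac{\|m_{\bj}\|^2}{f_{\bj}} + 2\tau\, f_{\bj}\ln\!\left(\frac{f_{\bj}}{M_{\bj}}\right)
\]
as a function of the block $(f_{\bj}, m_{\bj}) \in \RR_{>0} \times \RR^d$. Writing $g_{\bj} = \eps\,\phi + \psi$ with $\phi(f,m) = \|m\|^2/f$ and $\psi(f) = 2\tau f\ln(f/M)$, the two pieces will be handled separately: $\phi$ is convex but only semi-definite (it has a scaling direction in its kernel), whereas $\psi$ contributes a strict positive-definite boost along the $f$-direction that is precisely enough to kill the degeneracy of $\phi$.

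First I would compute $\nabla^2 \phi$ and verify convexity by the identity
\[
(\alpha,\beta)^{\intercal}\,\nabla^2\phi(f,m)\,(\alpha,\beta) = \frac{2}{f}\left\|\beta - \frac{\alpha}{f}m\right\|^{2} \ge 0\qquad \forall (\alpha,\beta)\in\RR\times\RR^d,
\]
which exhibits $\phi$ as (jointly) convex with one-dimensional kernel spanned by $(\alpha,\beta) = (1, m/f)$. (Equivalently, $\phi$ is the perspective of the convex function $m\mapsto\|m\|^2$.) Next I would observe that $\psi$ depends only on $f$ and $\psi''(f) = 2\tau/f > 0$, so $\nabla^2\psi$ is the rank-one PSD matrix with sole nonzero entry $2\tau/f$ in the $(f,f)$ block.

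Adding the two Hessians gives
\[
(\alpha,\beta)^{\intercal}\,\nabla^2 g_{\bj}(f,m)\,(\alpha,\beta) = \frac{2\eps}{f}\left\|\beta - \frac{\alpha}{f}m\right\|^{2} + \frac{2\tau}{f}\alpha^2,
\]
which is $\ge 0$ for $f>0$, and vanishes only when $\alpha = 0$ (from the entropy term) and then $\beta = 0$ (from the perspective term). Hence $\nabla^2 g_{\bj} \succ 0$ on $\{f_{\bj}>0\}$, and therefore $\nabla^2 F(u) \succ 0$ whenever $f\succ 0$, as claimed. The argument is dimension-free in $d$, so the conclusion is uniform over $d=1,2,3$.

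I do not anticipate a real obstacle: the only subtle point is making sure that adding the (degenerate) PSD Hessian of the perspective piece to the (rank-one) PSD Hessian of the entropy piece yields a strictly positive-definite sum, which is immediate from the explicit quadratic-form identity above since the kernel of the first term, namely $\{(\alpha,\alpha m/f)\}$, intersects the kernel of the second term, namely $\{\alpha = 0\}$, only at the origin.
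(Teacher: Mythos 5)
Your proof is correct, and it takes a genuinely different and cleaner route than the paper's. The paper writes out the $(d+1)N_v\times(d+1)N_v$ Hessian as a block matrix of diagonal $N_v\times N_v$ blocks, exploits that these blocks commute to reduce the characteristic polynomial to a scalar computation, and then derives explicit closed-form eigenvalues $\zeta_{1,q},\zeta_{2,q},\zeta_{3,q}$, checking each is positive. You instead notice that $F$ is separable across velocity grid points, so (after a permutation of coordinates) $\nabla^2 F$ is block-diagonal with $(d+1)\times(d+1)$ blocks $\nabla^2 g_{\bj}$, and you diagonalize nothing: the identity
$(\alpha,\beta)^{\intercal}\nabla^2 g_{\bj}(\alpha,\beta)=\tfrac{2\eps}{f}\bigl\|\beta-\tfrac{\alpha}{f}m\bigr\|^{2}+\tfrac{2\tau}{f}\alpha^{2}$
shows positive-definiteness at a glance, since the perspective term's kernel $\{(\alpha,\alpha m/f)\}$ meets the entropy term's kernel $\{\alpha=0\}$ only at the origin. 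Your argument is also dimension-free, whereas the paper's determinant expansion is written for $d=3$. The trade-off is that the paper's explicit eigenvalue formulas \eqref{eig1}--\eqref{eig3} are reused in Remarks~\ref{remark1} and~\ref{remark2} to quantify how the condition number degenerates as $\eps\to 0$ and how the preconditioner fixes it; your qualitative argument establishes the lemma but does not by itself provide that spectral information.
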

\begin{proof}
We only prove the $d = 3$ case as the other two cases can be easily reduced to. Recall $u=[f;m]$, then the Hessian $\nabla^2 F \in \RR^{4N_v \times 4N_v}$ reads:
\[
\nabla^2 F:= 
\begin{pmatrix}
\M & \CC_1 & \CC_2 & \CC_3\\
\CC_1 & \B & 0 &0 \\
\CC_2 & 0 & \B & 0\\
\CC_3 & 0 & 0 & \B
\end{pmatrix},
\]
where $\M,\B,\CC_l \in \RR^{N_v \times N_v}$ are all diagonal matrices defined as:
\begin{align*}
(\M)_{p,q} &= 
\left\{
    \begin{array}{cl}
   (2\eps \frac{\sum_{l=1}^3 m_l^2 }{f^3} + \frac{2 \tau}{f} )_q \Delta v^3 , &  \text{ if $p=q$,} \\
    0, & \text{otherwise,}
\end{array} \right.\\
(\B)_{p,q} &= 
\left\{
    \begin{array}{cl}
   (\frac{2 \eps}{f})_q \Delta v^3 , &  \text{ if $p=q$,} \\
    0, & \text{otherwise,}
\end{array} \right .\\
(\CC_l)_{p,q} &= 
\left\{
    \begin{array}{cl}
   (-\frac{2 \eps m_l}{f^2})_q \Delta v^3, &  \text{ if $p=q$,} \\
    0, & \text{otherwise.}
\end{array} \right .
\end{align*}
To obtain the eigenvalues $\zeta$ of $\nabla^2 F$, note that each entry in $\nabla^2 F$ is a diagonal matrix, thus they are pairwise multiplication commutative, we then have:
\begin{align*}
    | \nabla^2 F - \zeta \I| &=
 - | \CC_3 |  \begin{vmatrix}
 \CC_1 &\CC_2 & \CC_3 \\
 \B-\zeta \I & 0  &0 \\
 0 & \B - \zeta \I  & 0\\
 \end{vmatrix}
 +
|\B - \zeta \I| \begin{vmatrix}
\M - \zeta \I & \CC_1  &\CC_2 \\
\CC_1 & \B-\zeta \I  &0 \\
\CC_2  & 0 & \B-\zeta \I  
\end{vmatrix}\\
 &= - |\CC_3| |(\B-\zeta \I)^2 \CC_3| + |\B-\zeta \I| |(\M - \zeta \I)(\B - \zeta \I)^2-(\B - \zeta \I)\CC_2^2 - (\B - \zeta \I)\CC_1^2  | \\
 &=|\B - \zeta \I|^2 
 |(\M - \zeta \I)(\B - \zeta \I) -\CC_1^2-\CC_2^2-\CC_3^2 |.
\end{align*}
After calculation, eigenvalues of $\nabla^2 F$ are:
\begin{align}
\zeta_{1,q} &=(\frac{2\eps}{f})_q \Delta v^3 \label{eig1} \,;\\
\zeta_{2,q}&= \left(
 \frac{ \eps \sum_{l=1}^3 m_l^2}{f^3}+\frac{\tau + \eps}{f}+
\sqrt{(\frac{ \eps \sum_{l=1}^3 m_l^2}{f^3})^2 + \frac{2\eps \sum_{l=1}^3 m_l^2 (\tau+\eps)}{f^4}+(\frac{\tau-\eps}{f})^2}
\right)_q \Delta v^3 \label{eig2} \,;  \\
\zeta_{3,q}&=  
\left( \frac{ \eps \sum_{l=1}^3 m_l^2}{f^3}+\frac{\tau + \eps}{f}-
\sqrt{(\frac{ \eps \sum_{l=1}^3 m_l^2}{f^3})^2 + \frac{2\eps \sum_{l=1}^3 m_l^2 (\tau+\eps)}{f^4}+(\frac{\tau-\eps}{f})^2}
\right)_q \Delta v^3 \,. \label{eig3} 
\end{align}
which can be easily shown to be positive given $f>0$. 
\end{proof}
This lemma ensures that $\nabla^2 F(u^{(k)})$ are positive definite provided $f^{(k)}>0$. Moreover, we can easily see that $\HH^k$, which only keeps the diagonal elements of $\nabla^2 F(u^{(k)})$ are positive definite as well, and therefore guarantees the executability of our algorithm. 

Similar to the result in \cite{li2020fisher}, we have the following local convergence estimate, which indicates the role of $\HH^k$. 
\begin{theorem} \label{convThm}
Denote $u^*$ the unique minimizer of \eqref{twof}. Let $G^k = \int_0^1 \nabla^2 F(u^* + s(u^{(k)}-u^*)) \rd s$ and
suppose that there exists $0<\alpha< \beta$, such that 
$\alpha \I \preceq (\HH^k)^{-1} G^k \preceq \beta \I$, then for Algorithm \ref{alg:PN} we have $\| u^{(k+1)} - u^*  \|_{\HH^k} \leq C \| u^{(k)} - u^*  \|_{\HH^k}$, where $C = \max (|1-\gamma \alpha|,|\gamma \beta -1|) $. In particular, if we choose $\gamma = \frac{2}{\alpha + \beta}$, we have the optimal convergence rate with $C = \frac{\beta - \alpha}{\beta + \alpha}$. 
\end{theorem}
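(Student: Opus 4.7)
The plan is to view each iterate of Algorithm~\ref{alg:PN} as a gradient-like step followed by a projection and to show the composition is an $\HH^k$-contraction about $u^*$. Since $\chi$ is the indicator of the affine set $\{u:\A u=b\}$, the map $\prox^{\HH^k}_{\chi}$ is precisely the $\HH^k$-orthogonal projection onto this set, so the update reads
\begin{equation*}
u^{(k+1)} = \prox^{\HH^k}_{\chi}\bigl(u^{(k)} - \gamma(\HH^k)^{-1}\nabla F(u^{(k)})\bigr),
\end{equation*}
and this projection is $1$-Lipschitz in the $\HH^k$-norm, a standard fact for projections in Hilbertian metrics.

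First I would verify the fixed-point identity $u^* = \prox^{\HH^k}_{\chi}\bigl(u^* - \gamma(\HH^k)^{-1}\nabla F(u^*)\bigr)$. The optimality of $u^*$ for \eqref{twof} furnishes a Lagrange multiplier $\lambda^*$ with $\nabla F(u^*) = \A^{\intercal}\lambda^*$ and $\A u^* = b$; substituting these into the closed form \eqref{proxchi} the correction term cancels the gradient perturbation exactly. Equivalently, $(\HH^k)^{-1}\nabla F(u^*)$ lies in $(\HH^k)^{-1}\mathrm{range}(\A^{\intercal})$, the $\HH^k$-orthogonal complement of $\mathrm{null}(\A)$, so the projection sends the perturbed point back to $u^*$.

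Next, writing $u^{(k+1)}-u^*$ as the difference of two projections and invoking non-expansiveness,
\begin{equation*}
\|u^{(k+1)}-u^*\|_{\HH^k} \leq \bigl\|(u^{(k)}-u^*) - \gamma(\HH^k)^{-1}\bigl(\nabla F(u^{(k)}) - \nabla F(u^*)\bigr)\bigr\|_{\HH^k}.
\end{equation*}
Using the integral form of the mean-value theorem, $\nabla F(u^{(k)}) - \nabla F(u^*) = G^k(u^{(k)}-u^*)$, so the right-hand side equals $\bigl\|(\I - \gamma (\HH^k)^{-1} G^k)(u^{(k)}-u^*)\bigr\|_{\HH^k}$. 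It therefore remains to bound the $\HH^k$-operator norm of $T := \I - \gamma (\HH^k)^{-1} G^k$.

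Although $T$ is not symmetric in the Euclidean sense, it is self-adjoint with respect to the $\HH^k$-inner product because $\HH^k$ and $G^k$ are both symmetric (the latter being an average of Hessians of $F$, symmetric by the preceding lemma). Its $\HH^k$-operator norm therefore equals the largest modulus of its eigenvalues, namely $\max_\zeta |1-\gamma\zeta|$ where $\zeta$ ranges over the eigenvalues of $(\HH^k)^{-1}G^k$; the hypothesis confines $\zeta\in[\alpha,\beta]$, so $C = \max(|1-\gamma\alpha|,|\gamma\beta-1|)$ can be read off. Balancing $1-\gamma\alpha = \gamma\beta-1$ then pins down $\gamma = 2/(\alpha+\beta)$ and $C = (\beta-\alpha)/(\beta+\alpha)$. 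The only delicate point, and where I would be most careful, is this self-adjointness step: the hypothesis $\alpha\I\preceq(\HH^k)^{-1}G^k\preceq\beta\I$ must be interpreted through the similarity $(\HH^k)^{-1}G^k \sim (\HH^k)^{-1/2}G^k(\HH^k)^{-1/2}$, the latter being genuinely symmetric positive definite with real eigenvalues in $[\alpha,\beta]$, which is what converts the spectral assumption into the desired $\HH^k$-norm bound.
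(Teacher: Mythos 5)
Your proof is correct and follows essentially the same route as the paper's: both use the fixed-point identity $u^* = \prox^{\HH^k}_{\chi}(u^* - \gamma(\HH^k)^{-1}\nabla F(u^*))$, the non-expansiveness of $\prox^{\HH^k}_{\chi}$ in the $\HH^k$-norm, the integral mean-value representation $\nabla F(u^{(k)}) - \nabla F(u^*) = G^k(u^{(k)} - u^*)$, and then reduce to an operator-norm bound on $\I - \gamma(\HH^k)^{-1}G^k$ via the similarity to the symmetric matrix $(\HH^k)^{-1/2}G^k(\HH^k)^{-1/2}$. Your Lagrange-multiplier verification of the fixed-point identity and the observation that the prox is the $\HH^k$-orthogonal projection onto the affine constraint set are slightly more detailed than the paper, which simply asserts the fixed-point property, but the substance of the argument is identical.
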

\begin{proof}
First notice that $u^* = \prox_{\chi}^{\HH}(u^*-\gamma {\HH}^{-1} \nabla F(u^*))$ for $\HH=\HH^k$, then we have
\begin{align*}
    &\| u^{(k+1)} - u^*  \|_{\HH^k} \\
    & =  \| \prox_{\gamma \chi}^{\HH^k} (u^{(k)}-\gamma (\HH^k)^{-1} \nabla F(u^{(k)}))- \prox_{\gamma \chi}^{\HH^k} (u^*-\gamma (\HH^k)^{-1} \nabla F(u^*)) \|_{\HH^k} \\
    & \leq \| (u^{(k)}-u^*) - \gamma (\HH^k)^{-1} \nabla (F(u^{(k)})-F(u^*))\|_{\HH^k} \\
    & = \| (\I - \gamma (\HH^k)^{-1} G^k) (u^{(k)}-u^*)  \|_{\HH^k}  \\
    &= \| (\HH^k)^\half (\I - \gamma (\HH^k)^{-1} G^k) (u^{(k)} - u^*)  \| \\
    & = \| (\I - \gamma (\HH^k)^{-\half} G^k(\HH^k)^{-\half}) (\HH^k)^{\half}(u^{(k)}-u^*)  \|  \\
    & \leq \|\I - \gamma (\HH^k)^{-\half} G^k(\HH^k)^{-\half} \| \| u^{(k)}-u^* \|_{\HH^k}.
\end{align*}
Here the first inequality uses the fact that $\prox_{\gamma \chi}^{\HH^k}$ is a nonexpansive operator under $\HH^k$ norm, i.e., $\|\prox_{\gamma \chi}^{\HH^k}(u) - \prox_{\gamma \chi}^{\HH^k}(v)\|_{\HH^k} \leq \|u-v\|_{\HH^k}$ (see for instance \cite{lee2014proximal} for a proof) and the second equation uses the fact that $F(u^{(k)})-F(u^*) = G^k(u^{(k)} - u^*)$. 
Now since $ (\HH^k)^{-\half} G^k(\HH^k)^{-\half}$ is similar to $(\HH^k)^{-1} G^k$, we have $ C = \|\I - \gamma (\HH^k)^{-\half} G^k(\HH^k)^{-\half} \| = \max (|1-\gamma \alpha|,|\gamma \beta -1|)$. 
\end{proof}

\begin{remark} \label{remark1}
Adapting Theorem~\ref{convThm} to our case, we know that the convergence rate highly depends on the structure of $(\HH^k)^{-1} G^k$. Note that when $\HH^k = \I$, our method reduces to the projected gradient method. According to \eqref{eig1}, \eqref{eig2},  \eqref{eig3}, the eigenvalue $\zeta_{1,q} \rightarrow 0$ when $\eps \rightarrow 0$, which implies that $\alpha$ in the above theorem goes to zero, hence $C \rightarrow 1$. This explains why the gradient type methods converge slowly when $\eps$ is close to 0. On the contrary, with the preconditioner $(\HH^k)^{-1}$, the diagonal entries of $(\HH^k)^{-1} G^k$ all approximately equal to 1 in the neighborhood of optimal point $u^*$, thus we can approximate the eigenvalues $\sigma$ of $(  \HH^k)^{-1} G^k$ by solving (here we adopt the notations in Lemma 1):
\[
\begin{vmatrix}
\I - \sigma \I & \M^{-1} \CC_1 & \M^{-1} \CC_2 &\M^{-1} \CC_3 \\
\B^{-1} \CC_1 & \I-\sigma \I & 0 &0 \\
\B^{-1} \CC_2 & 0 & \I-\sigma \I & 0\\
\B^{-1} \CC_3 & 0 & 0 & \I -\sigma \I
\end{vmatrix} = 0.
\] 
This implies
\[
(1-\sigma)^2 \I \cdot |(1-\sigma)^2 \I -\M^{-1} \B^{-1}(\CC^2_1+\CC^2_2+\CC^2_3)|=0.
\]
Thus
\begin{equation}\label{evalue000}
\sigma = 1 \text{ or } 1 \pm \left( \sqrt{  \frac{2 \eps  \sum_{l=1}^3 m_l^2}{2 \eps \sum_{l=1}^3 m_l^2+2 \tau f^2}} \right)_i \Delta v^3 \,,
\end{equation}
which indicates that $\sigma \rightarrow 1$ when $\eps \rightarrow 0$. Therefore the condition number is close to 1 and hence gives much faster linear convergence ($C \ll 1$). 
\end{remark}
\begin{remark} \label{remark2}
Another case that may render the condition number of $G^k$ big is when the magnitude of $f$ varies largely. We will see in the following that the preconditioner $\HH^k$ also helps in this case. Let $j_1, j_2$ be two indexes such that $f_{j_1} =O(\eta)$ where $\eta \ll 1$ and $f_{j_2} = O(1)$. 
In the gradient type method when ${\HH^k}= \mathsf I$, according to $\eqref{eig1}$, we have $ \max \zeta \geq \frac{2 \eps}{f_{min}}$ and $\min \zeta \leq \frac{2 \eps}{f_{max}}$, which implies that the conditional number $\kappa$ of $G^k$ has $\kappa \geq \frac{f_{max}}{f_{min}} = O(\frac{1}{\eta})$. Immediately according to Theorem \ref{convThm}, the convergence rate $C \rightarrow 1$ when $\kappa \rightarrow \infty$.

On the other hand, from the expression of the eigenvalues \eqref{evalue000} for $(\HH^k)^{-1} G^k$, one sees 
\[
\sqrt{  \frac{2 \eps  \sum_{l=1}^3 m_l^2}{2 \eps \sum_{l=1}^3 m_l^2+2 \tau f^2}}  = \sqrt{\frac{2 \eps \frac{\sum_{l=1}^3 m_l^2}{f^2}}{2 \eps \frac{\sum_{l=1}^3 m_l^2}{f^2}+2 \tau}} = \sqrt{\frac{ \eps \left( \frac{\|m\|}{f}\right)^2 }{ \eps \left( \frac{\|m\|}{f}\right)^2 + \tau}} \,.
\]
Therefore, if the speed $\frac{\|m\|}{f}$ to the continuity equation \eqref{constraints} is bounded above by $C_1$ and below by $C_2$, then the above quantity is bounded between $\frac{\eps C_2}{\eps C_2 + \tau}$ and $\frac{\eps C_1}{\eps C_1 + \tau}$, which readily gives a uniform bound on the condition number. Here we do not have a rigorous proof to show the existence of $C_1$ and $C_2$, but from the numerical examples, we do observe a uniform bound on $\frac{\|m\|}{f}$.  
\end{remark}
Similarly, we have the local convergence estimate for Algorithm~\ref{alg:PNS} with line search as follows. 
\begin{theorem}\label{Thm:PNls2}
Suppose that there exists $r,~r',~R'>0$, such that $r' I \prec H^k \prec R' I$ and $r I \prec \nabla^2F(u^{(k)})$ for all $k \geq 0$. Assume also that $\nabla^2F$ is Lipschitz continuous with constant $L_2$. Let $\{ u^{(k)} \}$ be the sequence generated by Algorithm~\ref{alg:PNS} after sufficient large number of iteration, then
\[
\| u^{(k+1)} - u^* \|_{\HH^k} \leq C \| u^{(k)} - u^* \|^2_{\HH^k} + (1-(1-q)t^l) \| u^{(k)} - u^* \|_{\HH^k}\,,
\]
where 
$C=\frac{L_2\sqrt{R'}}{2r \sqrt{r'}}(2-t^l+qt^l)$ and $q = \| I-(\HH^k)^{-1/2} \nabla^2F(u^{(k)}) (\HH^k)^{-1/2} ||$.
\end{theorem}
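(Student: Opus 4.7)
The plan is to parallel the argument of Theorem \ref{convThm}, incorporating two new ingredients: the damped step $u^{(k+1)} = u^{(k)} + t^l v^k$ from the line search, and the mismatch between the preconditioner $\HH^k$ and the true Hessian $\nabla^2 F(u^{(k)})$ (quantified by $q$) together with the Lipschitz continuity of $\nabla^2 F$ (constant $L_2$), which jointly produce a quadratic correction term.

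First, I would decompose
\[
u^{(k+1)} - u^* = (1 - t^l)(u^{(k)} - u^*) + t^l\bigl(u^{(k)} + v^k - u^*\bigr)
\]
and apply the triangle inequality in the $\HH^k$-norm, reducing the task to bounding the one-step undamped error $\|u^{(k)} + v^k - u^*\|_{\HH^k}$. Exactly as in Theorem \ref{convThm}, the optimality of $u^*$ gives $u^* = \prox_{\chi}^{\HH^k}(u^* - (\HH^k)^{-1}\nabla F(u^*))$, and the nonexpansiveness of $\prox_{\chi}^{\HH^k}$ in the $\HH^k$-norm yields
\[
\|u^{(k)} + v^k - u^*\|_{\HH^k} \leq \|(I - (\HH^k)^{-1} G^k)(u^{(k)} - u^*)\|_{\HH^k},
\]
where $G^k := \int_0^1 \nabla^2 F(u^* + s(u^{(k)} - u^*))\,\rd s$ satisfies $\nabla F(u^{(k)}) - \nabla F(u^*) = G^k(u^{(k)} - u^*)$ by the mean value theorem.

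The crucial new step is the Hessian-preconditioner splitting
\[
\HH^k - G^k = \bigl(\HH^k - \nabla^2 F(u^{(k)})\bigr) + \bigl(\nabla^2 F(u^{(k)}) - G^k\bigr).
\]
Conjugating the first piece by $(\HH^k)^{-1/2}$ produces the operator $I - (\HH^k)^{-1/2} \nabla^2 F(u^{(k)}) (\HH^k)^{-1/2}$ whose spectral norm is exactly $q$, contributing a linear term $q\,\|u^{(k)} - u^*\|_{\HH^k}$. For the second piece, integrating the Lipschitz bound $\|\nabla^2 F(u^{(k)}) - \nabla^2 F(u^* + s(u^{(k)}-u^*))\| \leq L_2(1-s)\|u^{(k)} - u^*\|$ gives $\|\nabla^2 F(u^{(k)}) - G^k\| \leq \frac{L_2}{2}\|u^{(k)} - u^*\|$, which produces a term of order $\|u^{(k)} - u^*\|^2$. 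Converting the resulting Euclidean norm into the $\HH^k$-norm via the spectral bounds $r'I \prec \HH^k \prec R'I$ on one factor, and absorbing the other factor using the strong-convexity bound $rI \prec \nabla^2 F$, produces the coefficient $\frac{L_2 \sqrt{R'}}{2 r \sqrt{r'}}$ on the quadratic piece.

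Assembling via the triangle inequality of the first step then gives the linear contraction factor $1 - (1-q)t^l$ together with the quadratic coefficient $C = \frac{L_2 \sqrt{R'}}{2 r \sqrt{r'}}(2 - t^l + q t^l)$, where the combinatorial factor $(2 - t^l + q t^l)$ collects contributions from both the $(1-t^l)$- and $t^l$-weighted components of the error after the successive triangle inequalities. The main obstacle will be the careful bookkeeping of the norm conversions among the Euclidean, $\HH^k$-, and Hessian-weighted norms so that the constants line up as stated; in addition, the hypothesis \emph{after sufficiently many iterations} will be used to guarantee that $u^{(k)}$ lies in a neighborhood of $u^*$ in which the quadratic term is dominated by the linear contraction, thereby justifying the local convergence regime that this estimate describes.
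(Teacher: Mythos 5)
Your approach is correct but takes a genuinely different route from the paper's. The paper introduces the auxiliary full proximal Newton iterate $u^{(k)}+\Delta u^{(k)}_{nt}$ (using $\nabla^2 F(u^{(k)})$ as preconditioner), decomposes the error through it, and relies on two lemmas: one that compares the search directions produced by two different preconditioners (yielding the factor $q$), and one that invokes the quadratic convergence of exact proximal Newton from Lee--Sun--Saunders as a black box (yielding the quadratic term and requiring the ``after sufficiently many iterations'' hypothesis). You instead extend the argument of Theorem~\ref{convThm} directly: write $u^{(k+1)}-u^* = (1-t^l)(u^{(k)}-u^*) + t^l\bigl(u^{(k)}+v^k-u^*\bigr)$, apply nonexpansiveness of $\prox_{\chi}^{\HH^k}$ to the undamped step to reach $\|(\I-(\HH^k)^{-1}G^k)(u^{(k)}-u^*)\|_{\HH^k}$, and then split $\HH^k-G^k = (\HH^k-\nabla^2 F(u^{(k)}))+(\nabla^2 F(u^{(k)})-G^k)$, bounding the first piece by $q$ and the second by the Hessian Lipschitz constant. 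Your route is more self-contained (no external quadratic-convergence theorem) and actually yields a bound that holds without a locality hypothesis. The one inaccuracy in your write-up is the claim that the constants ``line up'': if you carry the bookkeeping out, the quadratic coefficient your argument produces is $\frac{t^l L_2}{2(r')^{3/2}}$, in which neither $r$ nor $R'$ appears --- there is no natural place for the strong-convexity bound $r\I\prec\nabla^2 F$ to enter your chain, since you never pass through the Hessian-weighted norm as the paper's Lemma~\ref{apdx_lemma2} does. This does not threaten the proof (the inequality has the same form, the linear factor $1-(1-q)t^l$ matches exactly, and the precise value of $C$ is inessential to the local-convergence conclusion), but you should state your own constant rather than assert agreement with the one in the theorem.
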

See the proof in Appendix~\ref{apdx:1}.

\begin{remark}
As with Algorithm~\ref{alg:PN}, the convergence behavior of Algorithm~\ref{alg:PNS} depends on the structure of $(\HH^k)^{-1} \nabla^2 F(u^{(k)})$ (Note that here we have $\nabla^2 F(u^{(k)})$ instead of $G^k$ in Theorem~\ref{convThm}). From Remark~\ref{remark1}, we see that eigenvalues $\sigma$ of $(\HH^k)^{-1} \nabla^2 F(u^{(k)})$ satisfy $\sigma \in (0,2)$ when $\Delta v \leq 1$, thus $q \in (0,1)$, which ensures the local linear convergence. In the case of $\eps \rightarrow 0$, from $\eqref{evalue000}$ we have $q \rightarrow 0$, and therefore
\[
\| u^{(k+1)} - u^* \|_{\HH^k} \leq C \| u^{(k)} - u^* \|^2_{H^k} + (1-t^l) \| u^{(k)} - u^* \|_{H^k}\,.
\]
Moreover, when $\eps \ll 1$, $\| \HH^k- \nabla^2 F(u^k) \| \rightarrow 0$ as $k \rightarrow \infty$, thus $ \{\HH^k\}_k$ satisfies Dennis-Mor\'e criterion, i.e.,
$\|(\HH_k - \nabla^2 F(u^*))(u_{k+1} - u_k)\|/ \|u_{k+1} - u_k\| \rightarrow 0$,
and Algorithm~\ref{alg:PNS} accepts unit step length after sufficient large number of iterations, i.e. $t^l=1$ (See Lemma 3.5 in \cite{lee2014proximal}). Therefore, a superlinear convergence is obtained after sufficient large number of iterations, which agrees with our numerical experiments (see Fig.~\ref{fig: mesh search}).
\end{remark}

In close, we state the following global sublinear convergence of Algorithm~\ref{alg:PNS}.
\begin{theorem}\label{Thm:ls}
Let $\{ u^{(k)} \}_{k=1}^{+\infty}$ be a sequence generated by Algorithm $\ref{alg:PNS}$, then 
$\displaystyle \min_{k = 0, \cdots , K-1} \{ \| u^{(k+1)} - u^{(k)}   \|^2 _{\HH^k} \} \leq \frac{\theta}{K} F(u^{(0)})$.
\end{theorem}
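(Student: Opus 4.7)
The plan is a standard descent-lemma argument: establish that each line-search iterate decreases $F$ by an amount proportional to $\|u^{(k+1)}-u^{(k)}\|^2_{\HH^k}$, then telescope and bound the minimum by the average. The three ingredients I will need are (i) a descent inequality coming from the prox subproblem, (ii) the line-search acceptance criterion, and (iii) a lower bound on $F$.

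The first and key step is to show that $v^k$ is an $\HH^k$-descent direction in the sense
\[
\langle \nabla F(u^{(k)}), v^k\rangle \leq -\|v^k\|^2_{\HH^k}.
\]
To see this, recall that $u^{(k)}+v^k = \prox_{\chi}^{\HH^k}(u^{(k)} - (\HH^k)^{-1}\nabla F(u^{(k)}))$ minimizes $\chi(z) + \tfrac{1}{2}\|z - u^{(k)} + (\HH^k)^{-1}\nabla F(u^{(k)})\|^2_{\HH^k}$. Since $\chi$ is the indicator of $\{z:\A z=b\}$ and, by construction, $\A u^{(k)}=b$, both $u^{(k)}$ and $u^{(k)}+v^k$ are feasible and $\A v^k=0$. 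The first-order optimality condition gives $\langle \HH^k v^k + \nabla F(u^{(k)}),\, z - u^{(k)} - v^k\rangle \geq 0$ for every feasible $z$; picking $z = u^{(k)}$ and rearranging yields the claimed inequality.

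Combining with the line-search rule, which guarantees $F(u^{(k+1)}) \leq F(u^{(k)}) + t^l\theta\,\langle\nabla F(u^{(k)}),v^k\rangle$, we obtain
\[
F(u^{(k+1)}) \leq F(u^{(k)}) - t^l\theta\,\|v^k\|^2_{\HH^k}.
\]
Because $u^{(k+1)}-u^{(k)} = t^l v^k$ and $t^l \in (0,1]$, we have $t^l\|v^k\|^2_{\HH^k} \geq (t^l)^2\|v^k\|^2_{\HH^k} = \|u^{(k+1)}-u^{(k)}\|^2_{\HH^k}$, so
\[
F(u^{(k)}) - F(u^{(k+1)}) \geq \theta\,\|u^{(k+1)}-u^{(k)}\|^2_{\HH^k}.
\]
Telescoping from $k=0$ to $K-1$ and using $F\geq 0$ on the feasible set (both summands are manifestly nonnegative: $\|m\|^2/f \geq 0$, and the discrete relative entropy $\sum_{\bj}f_{\bj}\ln(f_{\bj}/M^*_{\bj})\Delta v^d \geq 0$ by Gibbs' inequality since the constraint $\A u=b$ preserves the total mass shared with $M^*$) gives $\theta\sum_{k=0}^{K-1}\|u^{(k+1)}-u^{(k)}\|^2_{\HH^k} \leq F(u^{(0)})-F(u^{(K)})\leq F(u^{(0)})$, and replacing the sum by $K$ times the minimum yields the stated sublinear bound (modulo the apparent $\theta$ vs. $1/\theta$ normalization in the statement).

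The main obstacle is the descent inequality in the first step: one must exploit both the variational characterization of the $\HH^k$-weighted prox and the fact that the iterates remain in $\{\A u = b\}$ in order to convert the prox optimality into an inner-product bound against $\nabla F(u^{(k)})$. The remaining work — going from per-iterate descent to a sublinear rate on $\min_k\|u^{(k+1)}-u^{(k)}\|^2_{\HH^k}$ — is routine telescoping, provided one can justify $F\geq 0$, which is what mass conservation and Gibbs' inequality deliver.
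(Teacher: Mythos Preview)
Your proof is correct and follows the same route as the paper: establish $\langle\nabla F(u^{(k)}),v^k\rangle\le-\|v^k\|^2_{\HH^k}$ (the paper cites this as the ``search direction property'' from \cite{lee2014proximal}, whereas you derive it directly from the prox optimality condition together with feasibility of $u^{(k)}$), feed it into the Armijo condition, use $t^l\le 1$ to pass from $t^l\|v^k\|^2_{\HH^k}$ to $\|u^{(k+1)}-u^{(k)}\|^2_{\HH^k}$, and telescope. Your observation about the $\theta$ versus $1/\theta$ normalization is also correct: the paper's own chain of inequalities yields $\min_k\|u^{(k+1)}-u^{(k)}\|^2_{\HH^k}\le F(u^{(0)})/(\theta K)$, so the constant in the stated bound is a typo.
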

\begin{proof}
According to the search direction property (Proposition 2.4 in \cite{lee2014proximal}), we have 
\[
(\nabla F(u^{(k)}))^\intercal  v^k + \| v^k  \|^2 _{\HH^k} \leq 0.
\]
Thus by the sufficient descent requirement in the line search and the above inequality, we have 
\begin{align*}
    F(u^{(k+1)}) = F(u^{(k)}+t^l  v^k) & \leq F(u^{(k)}) +  t^l \theta (\nabla F(u^{(k)}))^{\intercal} v^k  \\
    & \leq F(u^{(k)}) - t^l \theta \| v^k  \|^2 _{\HH^k} \\
    & = F(u^{(k)}) - \frac{\theta }{t^l} \| u^{(k+1)} - u^{(k)}   \|^2 _{\HH^k} \\
    & \leq F(u^{(k)}) - \theta \| u^{(k+1)} - u^{(k)}   \|^2 _{\HH^k}.
\end{align*}
Summing up all the inequalities for $k = 0, \cdots, K-1$, we get
\[
F(u^{(K)}) \leq F(u^{(0)})  - \theta \sum_{k=0}^{K-1} \| u^{(k+1)} - u^{(k)}   \|^2 _{\HH^k}\,,
\]
which readily implies the result.  
\end{proof}

\subsection{Positivity}
Note first that the MUSCL scheme we used in the transport step preserves the positivity. Also, the proximal quasi-Newton method for the collision step is positivity preserving as long as the iteration step size $\gamma$ is properly chosen (In practice, this is done either by line search as explained in Algorithm 2.2, or by trial and error as used in Algorithm 2.1). Therefore, the full scheme is positivity preserving. It then remains to show that the step $\gamma$ can indeed be chosen properly, that is, its magnitude does not go to zero when $\eps$ vanishes. For simplicity, we consider $d=1$ in the rest of this subsection.

To start, we write down update rule explicitly, using Algorithm 1 or 2 with \eqref{proxchi}:
\begin{align*}
    u^{(k+1)} &= u^{(k)} - \gamma \HH^{-1} \nabla F(u^{(k)}) + \HH^{-1} \A^{\intercal}(\A \HH^{-1} \A^{\intercal})^{-1}[b-\A(u^{(k)}-\gamma \HH^{-1} \nabla F(u^{(k)}))] ,\\
    &=u^{(k)}-\gamma[\I -\HH^{-1} \A^{\intercal}(\A \HH^{-1} \A^{\intercal})^{-1}\A ] \HH^{-1}\nabla F(u^{(k)}).
\end{align*}
Here we omit the superscript ${(k)}$ in $\HH$ for notation simplicity. Recall that $\A=[\I \quad \D]$, $\HH^{-1} \nabla F = [e(f,m) \quad m]^{\intercal}$, where $e(f,m) = \frac{-\eps \frac{m^2}{f^2}+2 \tau (\log \frac{f}{M})+1}{2\frac{m^2}{f^3}+\frac{2\tau}{f}}$ and $\HH := 
\begin{pmatrix}
\HH_1 &0\\
0 & \HH_2
\end{pmatrix}
$ 
with $\HH_1, \HH_2 \in \RR^{N_v \times N_v}$ and
\begin{align*}
(\HH_1)_{i,j} &= 
\left\{
    \begin{array}{cl}
   (2\eps \frac{ m_i^2 }{f_i^3} + \frac{2 \tau}{f_i})  \Delta v , &  \text{ if $i=j$,} \\
    0, & \text{otherwise,}
\end{array} \right.\\
(\HH_2)_{i,j} &= 
\left\{
    \begin{array}{cl}
   (\frac{2 \eps}{f_i}) \Delta v , &  \text{ if $i=j$,} \\
    0, & \text{otherwise.}
\end{array} \right .
\end{align*}
Then we have 
\[
\HH^{-1} \A^{\intercal}(\A \HH^{-1} \A^{\intercal})^{-1}\A = 
\begin{pmatrix}
\PP & \PP \D \\
\cdot & \cdot
\end{pmatrix},
\]
where $\PP=\HH_1^{-1}(\HH_1^{-1}+\D \HH_2^{-1} \D^{\intercal})^{-1} = (\I +\D \HH_2^{-1} \D^{\intercal} \HH_1)^{-1}$. Thus
\[
f^{{(k+1)}}=f^{(k)}-\gamma [e(f^{(k)},m^{(k)})-(\PP e(f^{(k)},m^{(k)}) + \PP \D m^{(k)})].
\]
Since $\D m^{(k)} = f^{(0)} - f^{(k)} = f^* - f^{(k)}$ from the constraint, one sees
\begin{align}
     f^{(k+1)}&=f^{(k)}-\gamma [e(f^{(k)},m^{(k)})-(\PP e(f^{(k)},m^{(k)}) + \PP (f^* - f^{(k)}))] \nonumber \\ 
    & = f^{(k)} - \gamma[(\I-\PP)e(f^{(k)},m^{(k)})+ \PP f^{(k)}]+\gamma \PP f^* \nonumber
    \\ 
    & = (1-\gamma \PP) f^{(k)} + \gamma \PP f^* - \gamma (\I - \PP) e(f^{(k)}, m^{(k)})\,. \label{update_p}
\end{align}
To proceed, we study the $\eps$ dependence of matrix $\PP$ when $\eps$ vanishes in the following proposition. Note that when $\eps \ll  0$, $(\HH_1)_{i,i} \approx  \frac{2 \tau}{f_i} \Delta v$ and $(\HH_2^{-1})_{i,i} \approx \frac{f_i}{2 \eps} \Delta v$.

\begin{proposition}
For $\PP =  (\I +\D \HH_2^{-1} \D^{\intercal} \HH_1)^{-1}$, there exists an invertible matrix $\UU$ and a diagonal matrix $\Lambda$, both of which are independent of $\eps$, such that $\PP = U(\frac{1}{\eps} \Lambda+ \I)^{-1}U^{-1}$.
\end{proposition}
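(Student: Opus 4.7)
The plan is to exploit the small-$\eps$ approximations stated just before the proposition, namely $\HH_1 \approx 2\tau \Delta v\, F^{-1}$ and $\HH_2^{-1} \approx \tfrac{\Delta v}{2\eps} F$ with $F := \diag(f_i)$, to pull the $\eps$ dependence out as a single scalar factor. Multiplying the relevant matrices gives
\[
\D \HH_2^{-1} \D^{\intercal} \HH_1 = \frac{1}{\eps}\, S, \qquad S := \tau (\Delta v)^2\, \D F \D^{\intercal} F^{-1},
\]
where $S$ is manifestly independent of $\eps$ (it depends only on the current iterate $f^{(k)}$, the velocity mesh, and the time step $\tau$). Consequently $\PP = (\I + \tfrac{1}{\eps} S)^{-1}$, and the task reduces to producing an $\eps$-independent eigendecomposition of $S$.

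The main step is to diagonalize $S$ by a similarity transformation that does not involve $\eps$. Although $S$ is not itself symmetric, it is similar to a symmetric matrix via the conjugation $F^{-1/2}(\cdot) F^{1/2}$:
\[
F^{-1/2} S\, F^{1/2} = \tau (\Delta v)^2\, F^{-1/2} \D F \D^{\intercal} F^{-1/2} = \tau (\Delta v)^2\, R R^{\intercal},
\]
with $R := F^{-1/2} \D F^{1/2}$. Since $R R^{\intercal}$ is real symmetric and positive semidefinite, the spectral theorem furnishes an orthogonal matrix $V$ and a nonnegative diagonal matrix $\Lambda$, both independent of $\eps$, for which $F^{-1/2} S\, F^{1/2} = V \Lambda V^{\intercal}$. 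Setting $\UU := F^{1/2} V$ then yields $S = \UU \Lambda \UU^{-1}$, with $\UU$ invertible (a product of invertible factors, thanks to $f^{(k)} \succ 0$) and plainly independent of $\eps$.

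The conclusion is immediate: since $\UU$ and $\Lambda$ do not involve $\eps$,
\[
\I + \frac{1}{\eps} S = \UU \left( \I + \frac{1}{\eps} \Lambda \right) \UU^{-1},
\]
and inverting both sides gives $\PP = \UU\bigl(\tfrac{1}{\eps}\Lambda + \I\bigr)^{-1} \UU^{-1}$, which is the claimed form.

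The only nontrivial step is the diagonalization: $S$ is a product of symmetric matrices but is not symmetric, so the eigenbasis is not provided directly. The key observation is that the $\eps$-free similarity $F^{-1/2}(\cdot) F^{1/2}$ converts $S$ into the Gram matrix $\tau(\Delta v)^2 R R^{\intercal}$, to which the spectral theorem applies. Positivity of $f^{(k)}$, enforced by the line search in Algorithm~\ref{alg:PNS}, is what makes $F^{\pm 1/2}$ well-defined and guarantees invertibility of $\UU$.
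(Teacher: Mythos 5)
Your argument is correct and takes essentially the same route as the paper: both use the small-$\eps$ limit of $\HH_1$ to make it $\eps$-independent, pull out the scalar $1/\eps$, and diagonalize the remaining matrix by an $\eps$-free symmetric similarity that reduces it to a positive semi-definite matrix handled by the spectral theorem. Your conjugating matrix $\diag(f)^{-1/2}$ agrees, up to a constant factor, with the paper's $\HH_1^{1/2}$ in the small-$\eps$ regime, so your Gram form $\tau(\Delta v)^2 R R^{\intercal}$ and the paper's $\HH_1^{1/2}\D\hHH_2^{-1}\D^{\intercal}\HH_1^{1/2}$ are the same matrix.
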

\begin{proof}
First, we note that $\D$ has exactly one zero eigenvalue, 
where $\D$ is:
\[
\D = 
\begin{pmatrix}
1 & 1  \\
-1 &0 & 1  \\
 & \ddots & \ddots & \ddots\\
 & & -1 & 0 &1 \\
 & & &-1 & -1
\end{pmatrix}
\in \mathbb{R}^{N_v \times N_v}.
\]
Next we define the diagonal matrix $(\hat{\HH}_2)_{i,i} := \frac{2}{ f_i \Delta v}$, then ${\HH}_2^{-1} = \frac{1}{\eps} {\hHH}^{-1}_2 $ and $\D \HH_2^{-1} \D^{\intercal} \HH_1 = \frac{1}{\eps} \D \hHH_2^{-1} \D^{\intercal} \HH_1$.
Consequently, along with the fact that both $\HH_1$ and $\hHH_2$ are invertible, $Rank(\D \hHH_2^{-1} \D^{\intercal} \HH_1)=  Rank((\D \hHH_2^{-1/2}) (\D \hHH_2^{-1/2})^{\intercal} \HH_1)  = N_v-1$. Also note that since $\D \hHH_2^{-1} \D^{\intercal} $ is symmetric and $\HH_1$ is positive definite, we have 
\[
\HH_1^{\half} \D \hHH_2^{-1} \D^{\intercal} \HH_1 \HH_1^{-\half} = \HH_1^{\half} \D \hHH_2^{-1} \D^{\intercal}  \HH_1^{\half}\,,
\]
and therefore $\D\hHH_2^{-1} \D^{\intercal} \HH_1$ is similar to a symmetric matrix, and thus diagonalizable. Moreover, since both $\D\hHH_2^{-1} \D^{\intercal}$ and $\HH_1$ are positive semi-definite, we conclude that $\D\hHH_2^{-1} \D^{\intercal} \HH_1$ has exactly one zero eigenvalue and all the rest are positive. So there exist invertible matrix $U$ independent of $\eps$, s.t.
\[
\D \hHH_2^{-1} \D^{\intercal} \HH_1 = U \Lambda U^{-1}.
\]
where $\Lambda$ is a diagonal matrix with non-negative entries and likewise
\[
\D \HH_2^{-1} \D^{\intercal} \HH_1 = \frac{1}{\eps} U \Lambda U^{-1}.
\]
As a result,   
\[
K= \I +\D \HH_2^{-1} \D^{\intercal} \HH_1 = U (\frac{1}{\eps} \Lambda+ \I) U^{-1} 
\]
is invertible. Thus 
\[
P= K^{-1} = U(\frac{1}{\eps} \Lambda+ \I)^{-1}U^{-1}.
\]
\end{proof}

From the above proposition, we see that when $\eps \rightarrow 0$, the matrix $\PP \rightarrow 0$, and~$\eqref{update_p}$ becomes
\[
f^{(k+1)} =  f^{(k)} - \gamma  e(f^{(k)}, m^{(k)}).
\]
Therefore the selection of step size $\gamma$ that guarantees positivity doesn't vanish with $\eps$, instead its magnitude only depends on the initial data $f^{(0)}$.

\subsection{Asymptotic property}
First we look at the implicit collision step. Consider \eqref{disc1}--\eqref{disc2} for any fixed $x_i$, and let $\eps \rightarrow 0$ we have the following constrained optimization:
\begin{align}\label{limvar}
     f^{n+1}_{\bj} \in \argmin_{f} \left\{ \sum_{\bj}  f_{\bj} \ln\left(\frac{f_{\bj}}{M^*_{\bj}}\right) \Delta v^d \right\},
     \qquad \text{s.t.} \quad \sum_{\bj} f_{\bj} = \sum_{\bj} M^*_{\bj} \, .
\end{align}
where the constraint is obtained by summing over $\bj$ in \eqref{disc2}.
To make this limit of the variational problems \eqref{disc1}--\eqref{disc2} towards \eqref{limvar} as $\epsilon\to 0$ fully rigorous, one can make a direct use of the theory of $\Gamma$-convergence, see \cite{Braides}. Let us denote by $F_\epsilon(u)$ the functional defining the variational problems \eqref{disc1}--\eqref{disc2} and $F_0(u)$ the functional for \eqref{limvar}. 
In fact, it is very easy to check that in this finite dimensional setting, the sequence of functionals $F_\epsilon(u)$ is monotone with respect to $\epsilon$ and thus, the $\Gamma$-convergence of the $\epsilon$-regularized problems \eqref{disc1}--\eqref{disc2} to \eqref{limvar} follows from \cite[Chapter 2]{Braides}. This shows that the infimum value of the functional $F_\epsilon(u)$ converges to the infimum value of the functional $F_0(u)$ as $\epsilon\to 0$. Moreover, any cluster point of approximating sequences in $\epsilon$ will  converge to a point where the infimum of $F_0(u)$ is achieved. Therefore, this shows our claim above on the right limiting optimization problem. Since further discussion of this point is not needed for the purposes of this work, we leave to the reader to check that we have $\epsilon$-equicoercivity of the minimizing sequences in the $f$ variables and in the scaled $\sqrt{\epsilon} \,m$ variables, this together with the previous statement of convergence of the infimum values and the constraint \eqref{disc2} lead to the convergence of the minimizers of $F_\epsilon$ to the minimizer of $F_0$.

Coming back to limiting collisional step \eqref{limvar}, the corresponding Lagrangian writes as:
\[
L(f,\lambda)= \sum_{\bj}  f_{\bj} \ln\left(\frac{f_{\bj}}{M^*_{\bj}}\right)\Delta v^d  + \lambda \left(\sum_{\bj} f_{\bj} - \sum_{\bj} M^*_{\bj}\right),
\]
which leads to the following optimality condition: 
\[
\frac{\delta L}{\delta f_{\bj}} = \ln\left(\frac{f_{\bj}}{M^*_{\bj}}\right)\Delta v^d + (\lambda+\Delta v^d) \textbf{1} = 0.
\]
Therefore, one sees that $f_{\bj}$ differs from $M_{\bj}^*$ by one constant multiplier $\exp(-\frac{\lambda+\Delta v^d}{\Delta v^d})$ for all $\bj$. Along with mass conservation, we then have $f^{n+1}=M^*$. Recall the definition of $M^*$ in \eqref{eqn:Mstar}, and since $\rho^* = \rho^{n+1}$, we have
\begin{equation*}
  f^{n+1}_{\bj} \rightarrow \frac{\rho^{n+1}}{(\sqrt{2\pi})^d} e^{-\frac{|v_{\bj} + \nabla_x \phi^{n+1}|^2}{2}}\,,  \qquad \text{for all}  \quad n > 0 \,.
\end{equation*}
This allows us now to connect to the transport step in order to obtain the limiting scheme and check for consistency with the limiting equation \eqref{rho_limit}, i.e., showing the asymptotic property of the scheme.
Plugging it into the transport step and summing over $\bj$, we have
\begin{equation}\label{sum_over_v}
   \frac{\rho^{n+1} - \rho^n}{\tau} + \sum_{\bj} v_{\bj} \cdot \nabla_x f^n_{\bj} = 0 \,,
\end{equation}
where 
\begin{align*} \sum_{\bj} v_{\bj}\cdot \nabla_{x} f_{\bj}^{n} &=\nabla_{x} \cdot \left[\sum_{\bj}\left(v_{\bj}+\nabla_{x} \phi^{n}-\nabla_{x} \phi^{n}\right) e^{-\frac{\left|v_{j}+\nabla_{x} \phi^{n}\right|^{2}}{2}} \frac{\rho^{n}}{(\sqrt{2 \pi})^d}\right] \nonumber \\ &=-\nabla_{x} \cdot\left[\sum_{\bj} \nabla_{x} \phi^{n} e^{-\frac{\left|v_{\bj}+\nabla_{x} \phi^{n}\right|^{2}}{2}} \frac{\rho^{n}}{(\sqrt{2 \pi})^d}\right] \nonumber \\ &=-\nabla_{x} \cdot \left(\rho^{n} \nabla_{x} \phi^{n}\right) \sum_{\bj} \frac{1}{(\sqrt{2 \pi})^d} e^{-\frac{\left|v_{\bj}+\nabla_{x} \phi^{n}\right|^{2}}{2}} \,.
\end{align*}
It is obvious that $\sum_{\bj} \frac{1}{(\sqrt{2 \pi})^d} e^{-\frac{\left|v_{\bj}+\nabla_{x} \phi^{n}\right|^{2}}{2}}$ approximates one with at least second order accuracy in $v$. Therefore $\eqref{sum_over_v}$ gives a consistent  semi-discretization for the limit equation $\eqref{rho_limit}$, which concludes the asymptotic property of our scheme.

\section{Numerical examples}
In this section, we provide several examples demonstrating the efficiency and accuracy of our algorithms. The examples are presented in the order of increasing dimensions in $v$. The stopping criteria is chosen as: 
\[ 
\frac{|F(u^{(k+1)})-F(u^{(k)})|}{ | F(u^{(k)})|} < \delta, \qquad \frac{\|u^{(k+1)}-u^{(k)}\|_1}{ \| u^{(k)}\|_1} < \delta \,.
\]
where $\delta = 10^{-7}$ for all examples. Throughout the examples, we use Algorithm 2 with $\theta = 0.01$ unless otherwise specified. 
\subsection{1D in velocity}
\subsubsection{Convergence}
We first show that the convergence of our optimization algorithm is uniform in $\eps$. As this step matters only in $v$ direction, we consider the spatially homogeneous case:
\begin{equation}\label{sph}
\left\{\begin{array}{l}{\partial_{t} f=\frac{1}{\varepsilon} \nabla_{v} \cdot\left(vf +  \nabla_{v} f \right)}, \\ f(0,v)= 2e^{-\frac{(v-1.5)^2}{1.2}}+ \frac{1}{2} e^{-\frac{(v+1.5)^2}{1.5}}\,.\end{array}\right.
\end{equation}
The computational domain is chosen as $v \in [-5,5]$, and time step $\tau =0.05$. For one step JKO scheme, we show convergence behavior with varying $\eps$ by computing the relative error 
\begin{equation}
\textrm{error}_k = \frac{\|u^{(k)} - u^* \|_1}{\|u^*\|_1} . \label{errork}
\end{equation}
in Fig.~\ref{fig:PN_1D} for both fixed step size and adaptive step size with line search. Here $u^*$ is obtained by using Algorithm 1 with 160 iterations. It is seen that with fixed step size, a linear convergence is observed; while with line search, an initial linear convergence is followed by a super-linear convergence, which happens when the step size approaches one. 
\begin{figure}[!ht]
    \centering
    \includegraphics[width=0.48\textwidth]{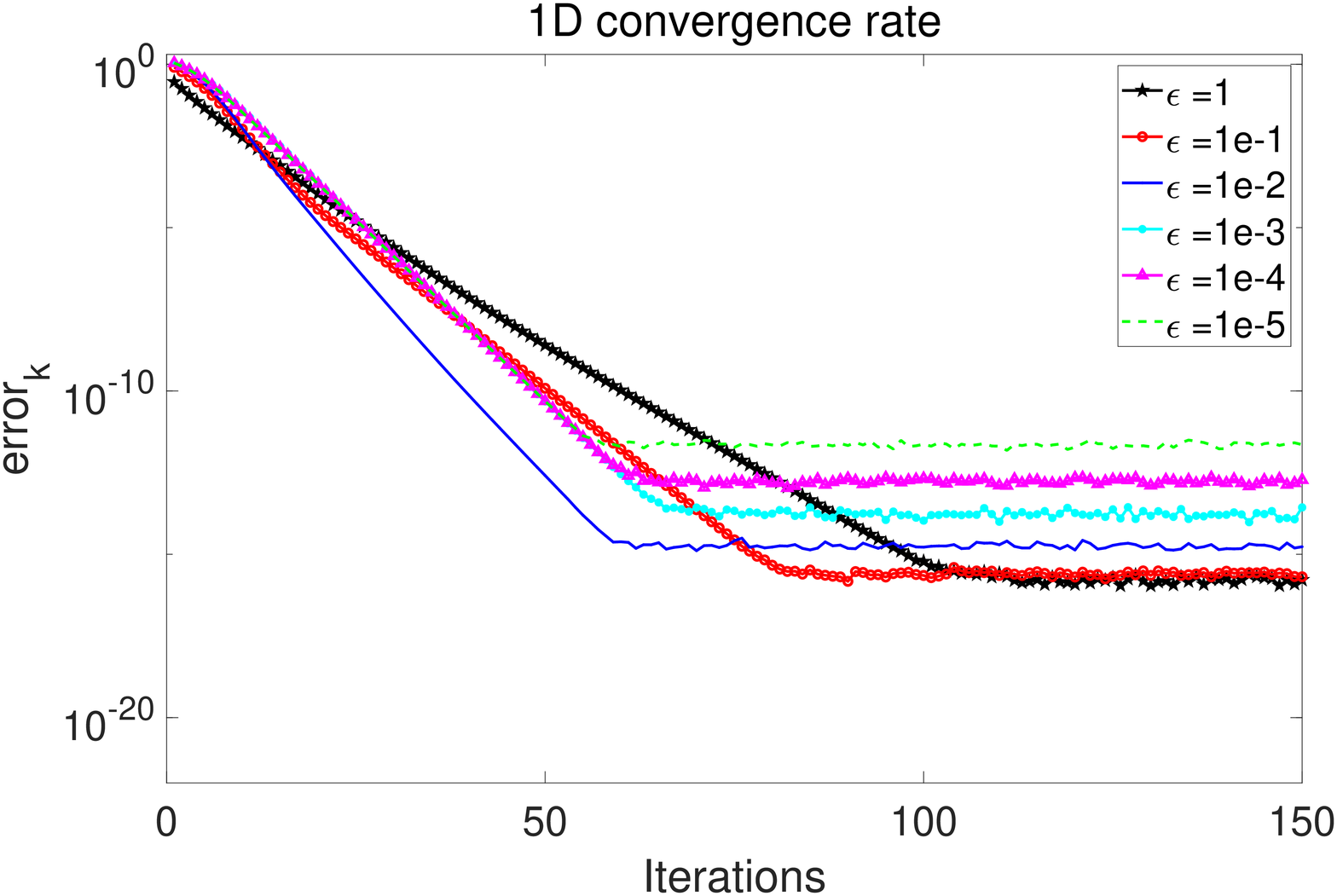}
    \includegraphics[width=0.48\textwidth]{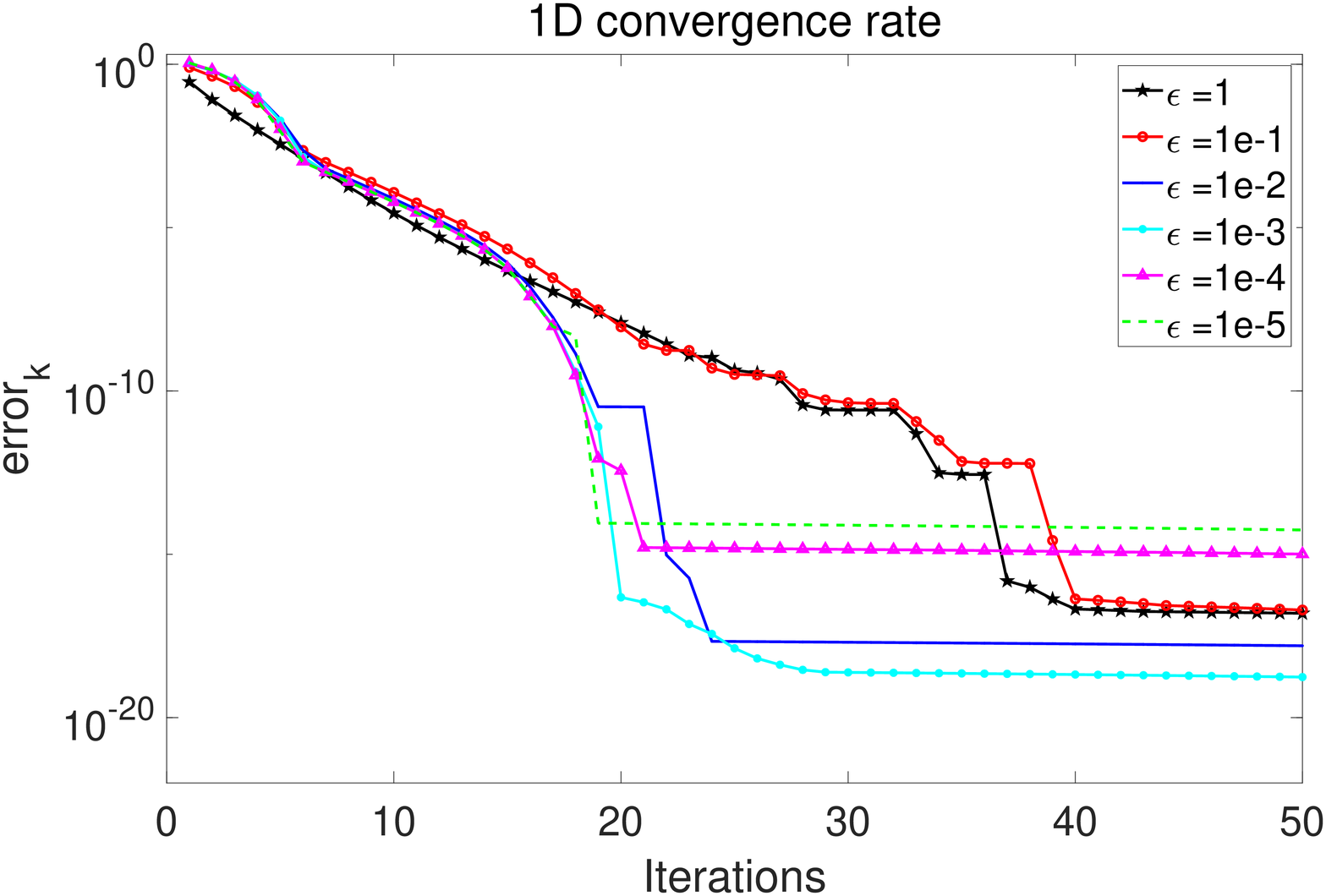}
    \caption{Convergence of one step JKO scheme with respect to $\epsilon$. Left: Proximal quasi-Newton with fixed step size: $\gamma = 0.5,0.5,0.5,0.4,0.4,0.4$ for $\eps = 1, 1e-1, 1e-2, 1e-3,1e-4,1e-5$ respectively. Right: Proximal quasi-Newton with line search. In both cases, $\Delta v = 10/64$, $\tau = 0.05$.}
    \label{fig:PN_1D}
\end{figure}
We also record the real simulation time of two methods in one outer time step when $\delta = 1e-7$ and $\tau = 0.05$. Result are shown in Table~\ref{tab:time cost}, where one sees that these two approaches are comparable in terms of efficiency.
\begin{table}[!ht]
    \centering
    \begin{tabular}{c c c}
    \hline\hline
          Method & Fix step size & Line search \\ [0.5ex] 
          \hline
         $\varepsilon = 1$  & 0.017s & 0.008s \\
         $\varepsilon = 1e-1$ & 0.011s & 0.008s \\
         $\varepsilon = 1e-2$ & 0.013s & 0.015s \\
         $\varepsilon = 1e-3$ & 0.02s & 0.007s \\
         $\varepsilon = 1e-4$ & 0.013s & 0.011s \\
         $\varepsilon = 1e-5$ & 0.013s & 0.030s \\
         \hline
    \end{tabular}
    \caption{ Run time of one outer time step. For fix step size method, we use $\gamma = 0.5,0.5,0.5,0.4,0.4,0.4$ for $\eps = 1,1e-1, 1e-2, 1e-3,1e-4,1e-5$ respectively. In both cases, $\Delta v = 10/64$, $\tau = 0.05$.}
    \label{tab:time cost}
\end{table}

Next, we check the dependence of convergence on the mesh size $\Delta v$ and time step $\tau$ with the same setting as above. Two cases with $\eps = 1$ and $\eps = 1e-5$ are considered. $u^*$ is again obtained by running Algorithm 1 with 160 iterations. The results are collected in Fig.~\ref{fig: mesh} and \ref{fig: mesh search}, where an almost uniform convergence behavior is observed with different $\Delta v$, which indicates the {\it independency} of our algorithm on the mesh size. 
\begin{figure}[!ht]
    \centering
    \includegraphics[width=0.45\textwidth]{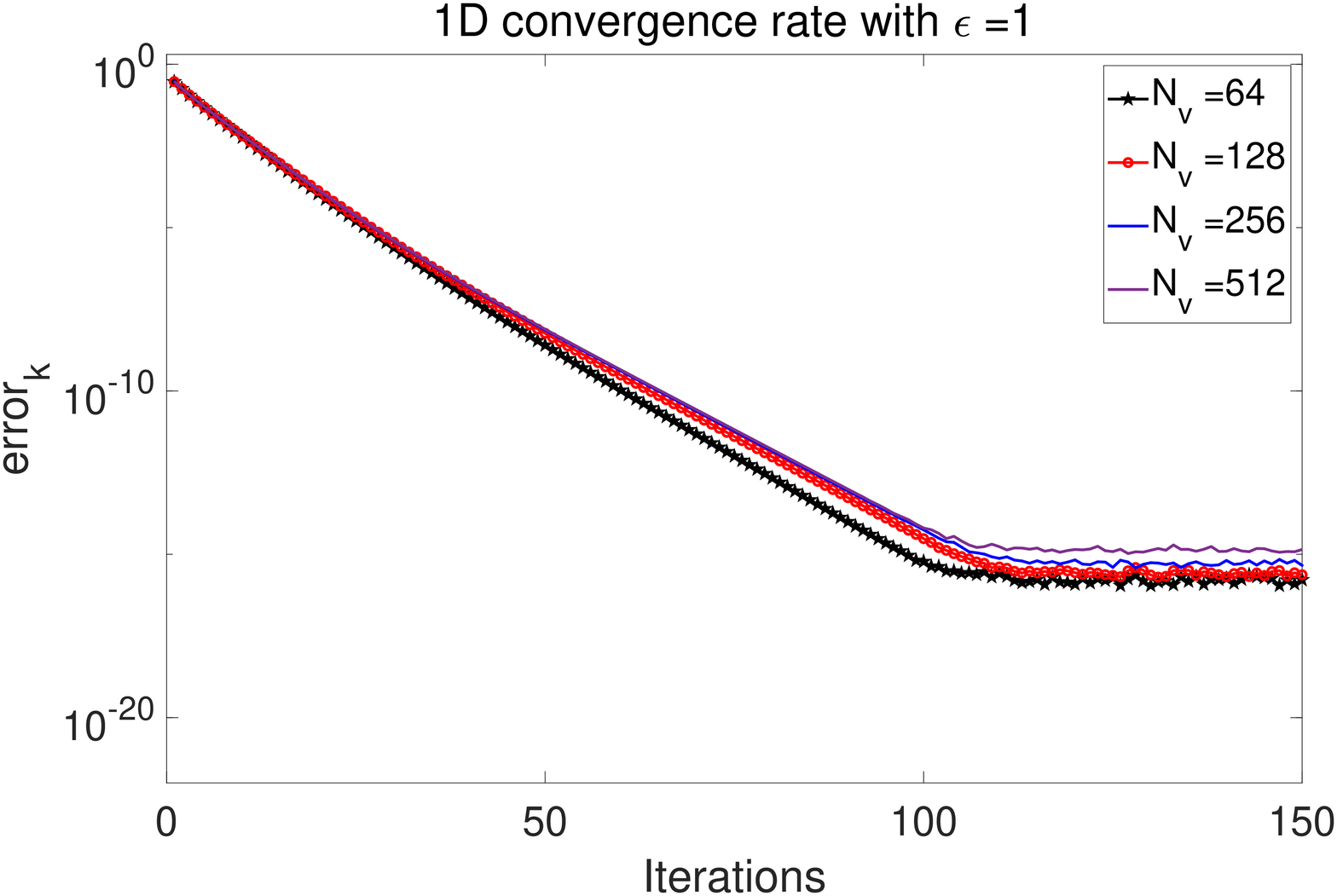}
    \includegraphics[width=0.45\textwidth]{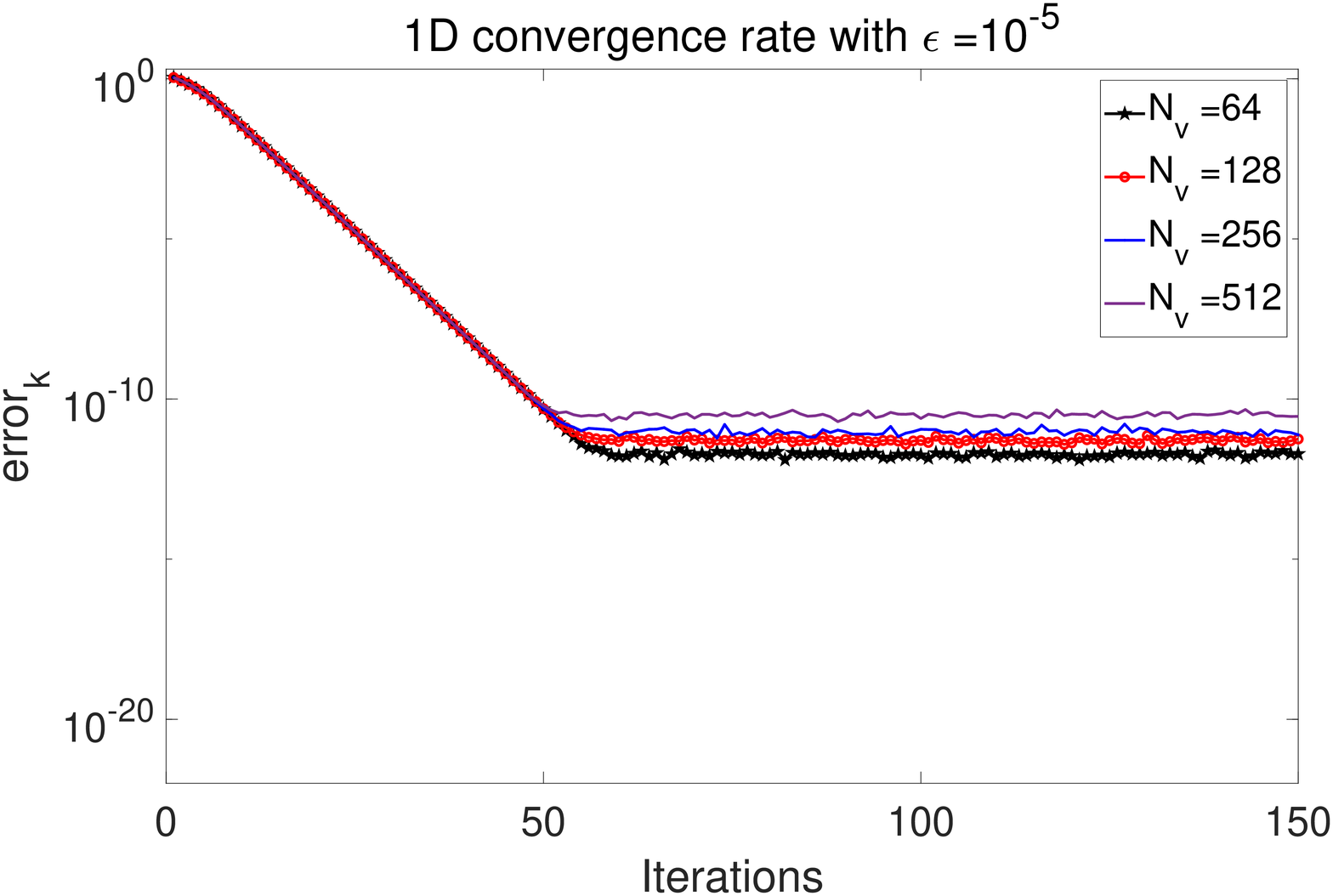}
    \vfill
    \includegraphics[width=0.45\textwidth]{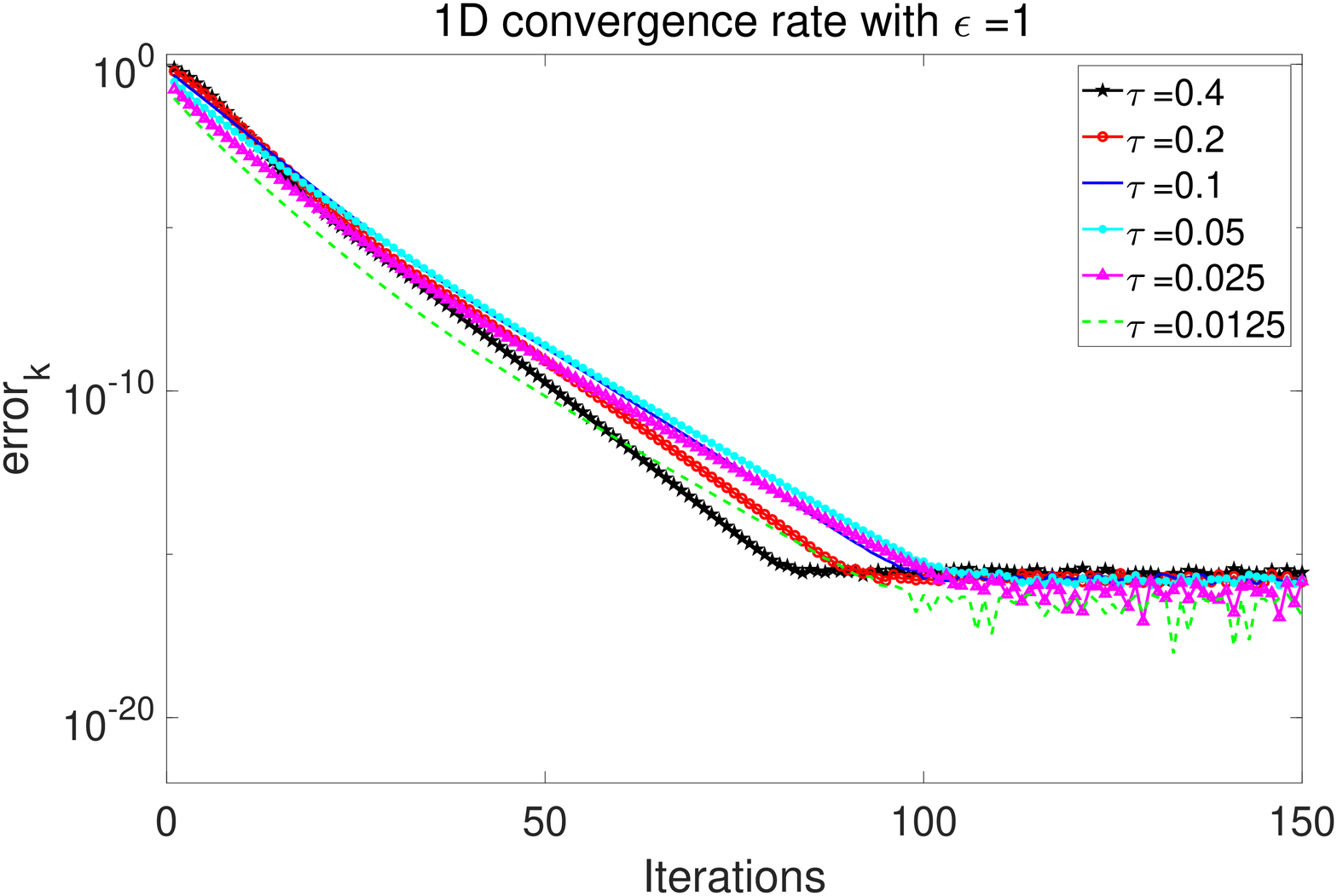}
    \includegraphics[width=0.45\textwidth]{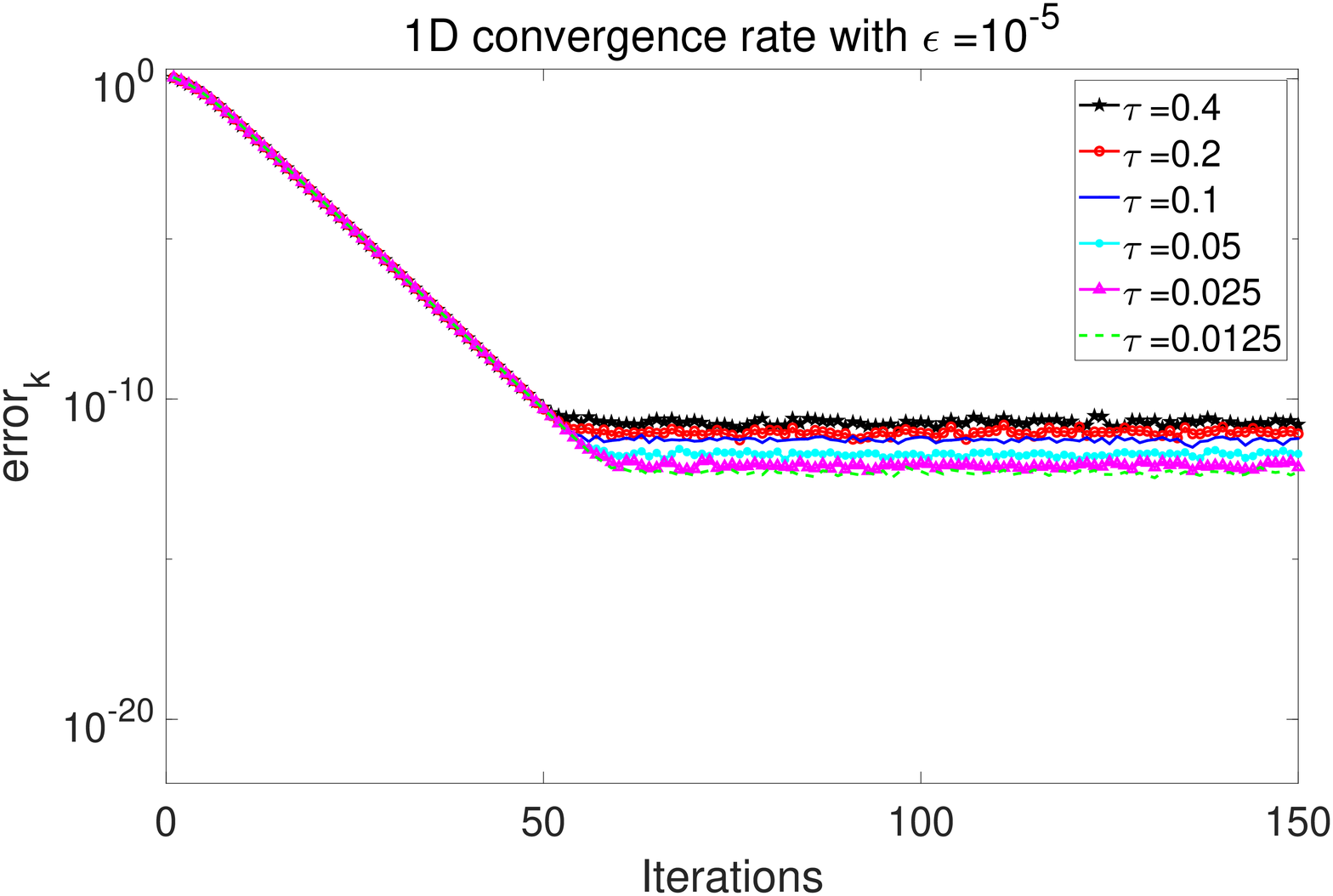}
    \caption{Convergence of Algorithm 1 with different $ \Delta v$ (top), or different $\tau$ (bottom). Top left: $\eps =1$, step size $\gamma =0.5$ and $\tau=0.05$. Top right: $\eps =1e-5$, step size $\gamma =0.4$ and $\tau=0.05$.  Bottom left: $\eps =1$, step size $\gamma =0.5$ and $\Delta v=10/64$. Bottom right: $\eps =1e-5$, step size $\gamma =0.4$ and $\Delta v=10/64$. }
    \label{fig: mesh}
\end{figure}

\begin{figure}[!ht]
    \centering
    \includegraphics[width=0.45\textwidth]{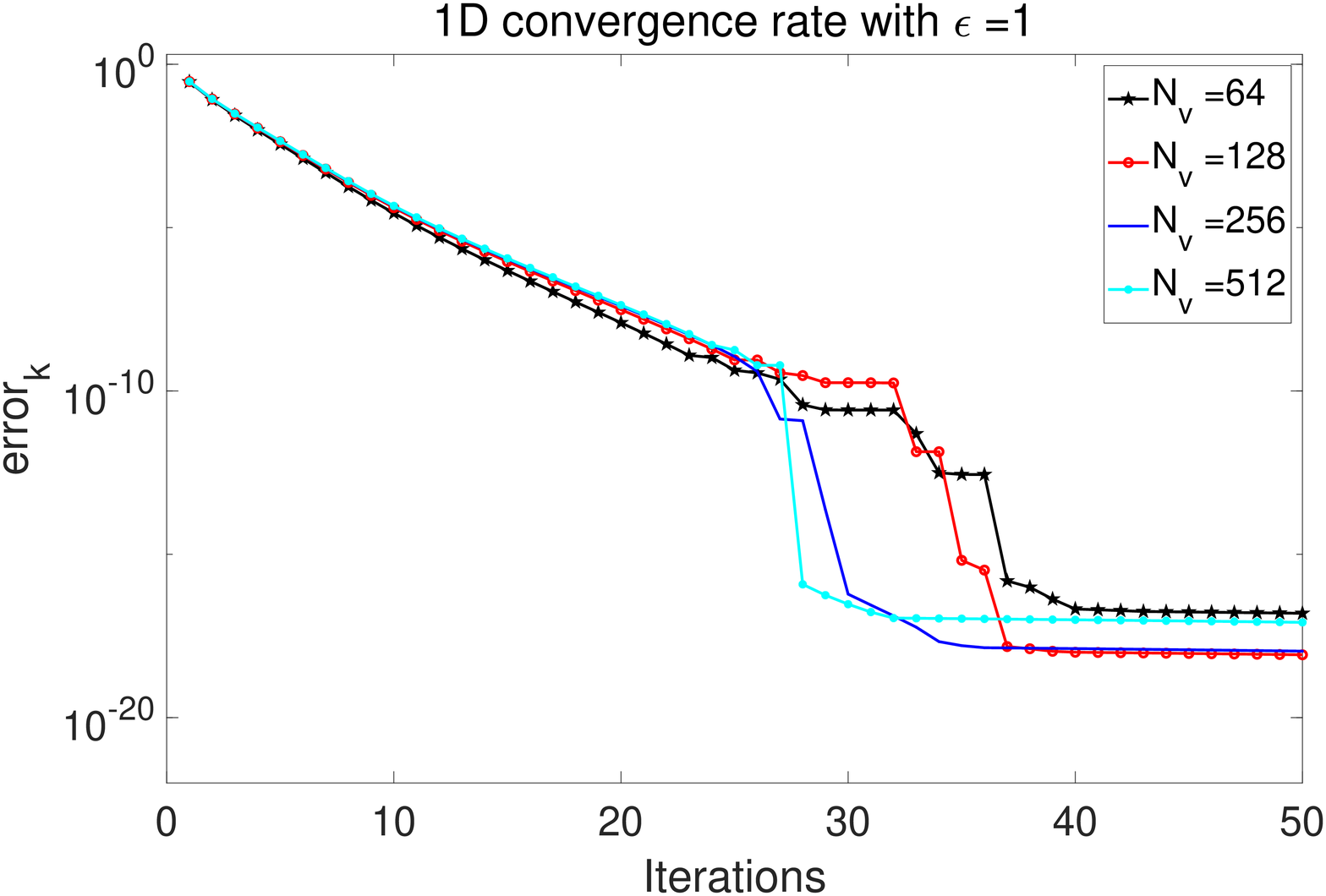}
    \includegraphics[width=0.45\textwidth]{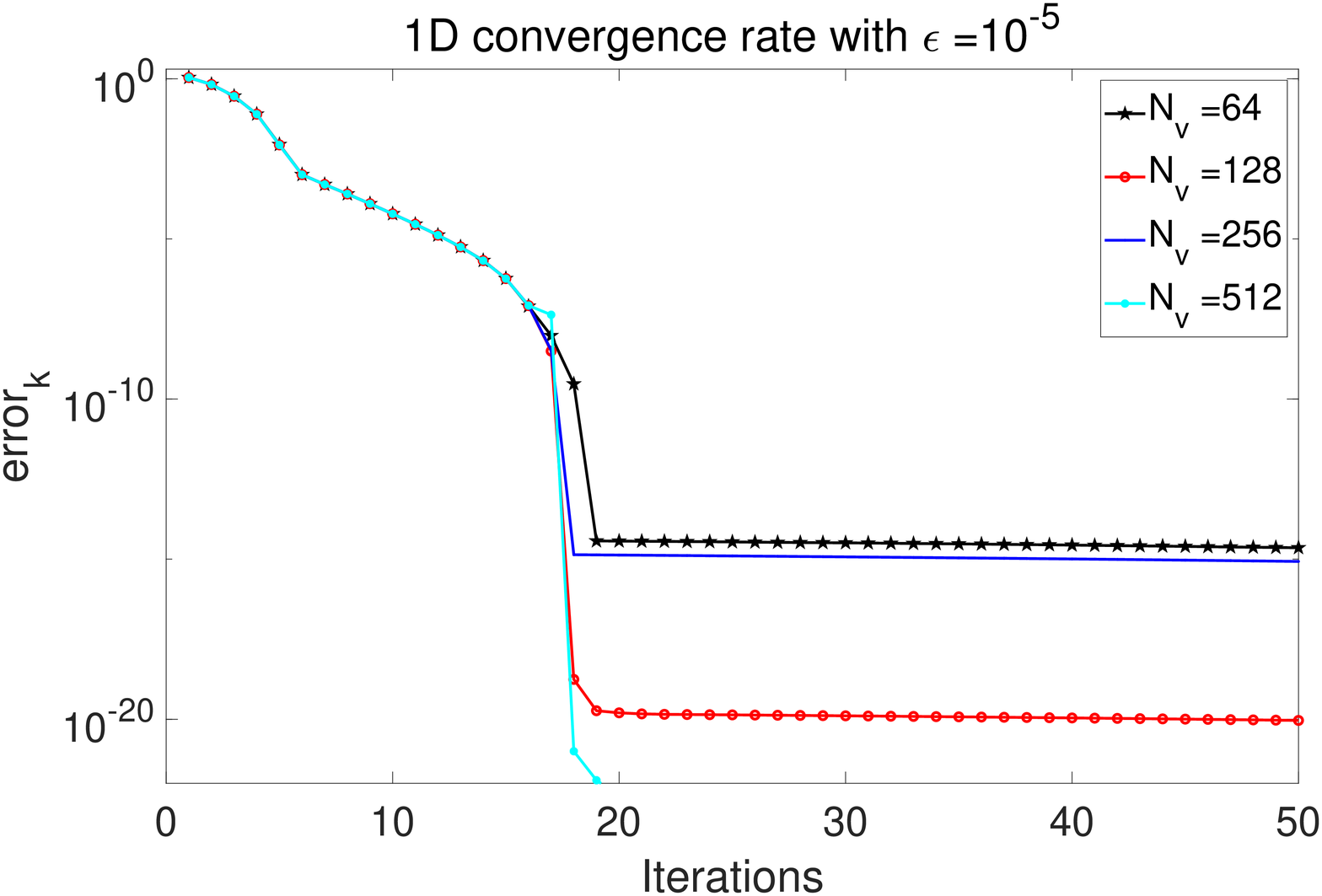}
    \vfill
    \includegraphics[width=0.45\textwidth]{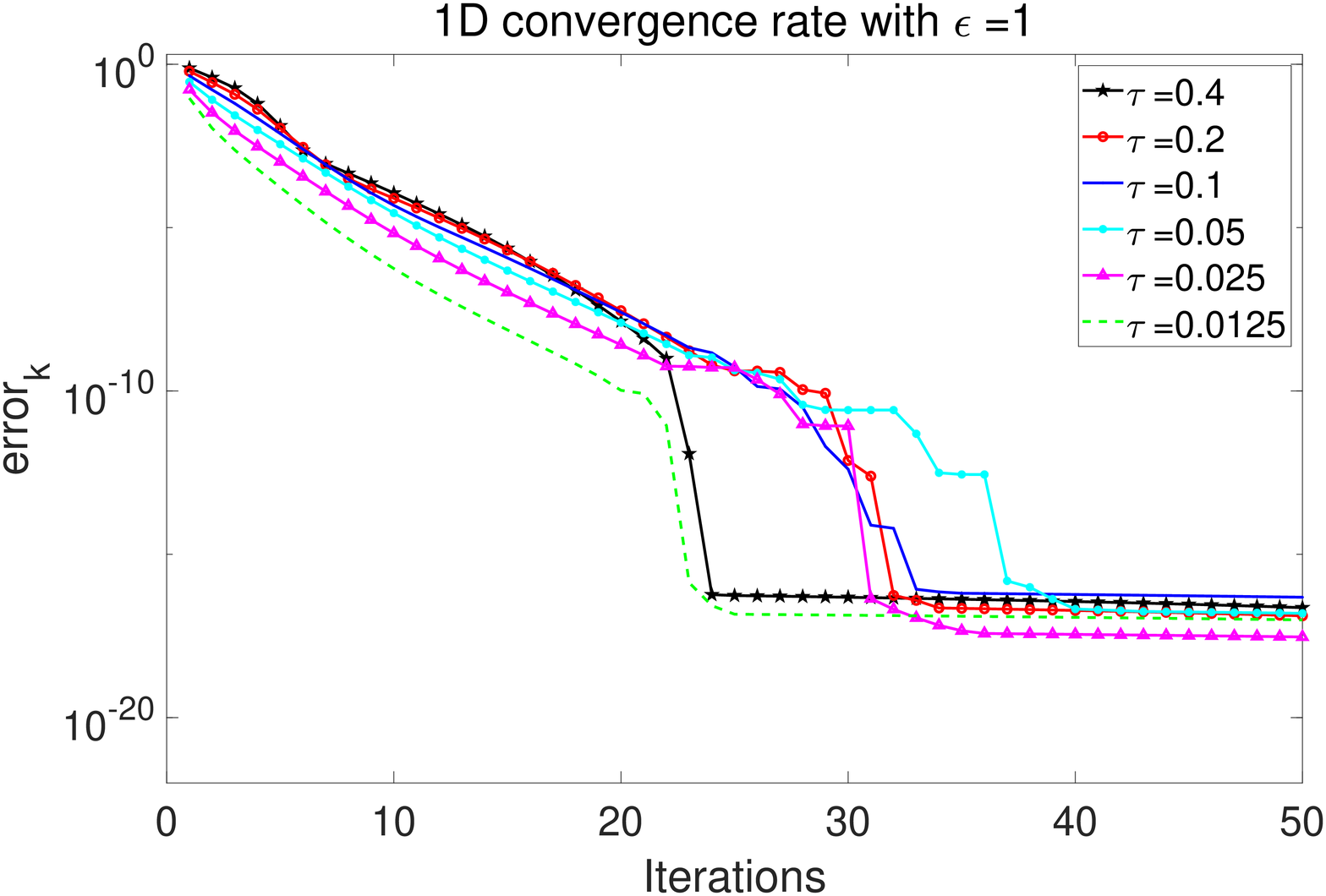}
    \includegraphics[width=0.45\textwidth]{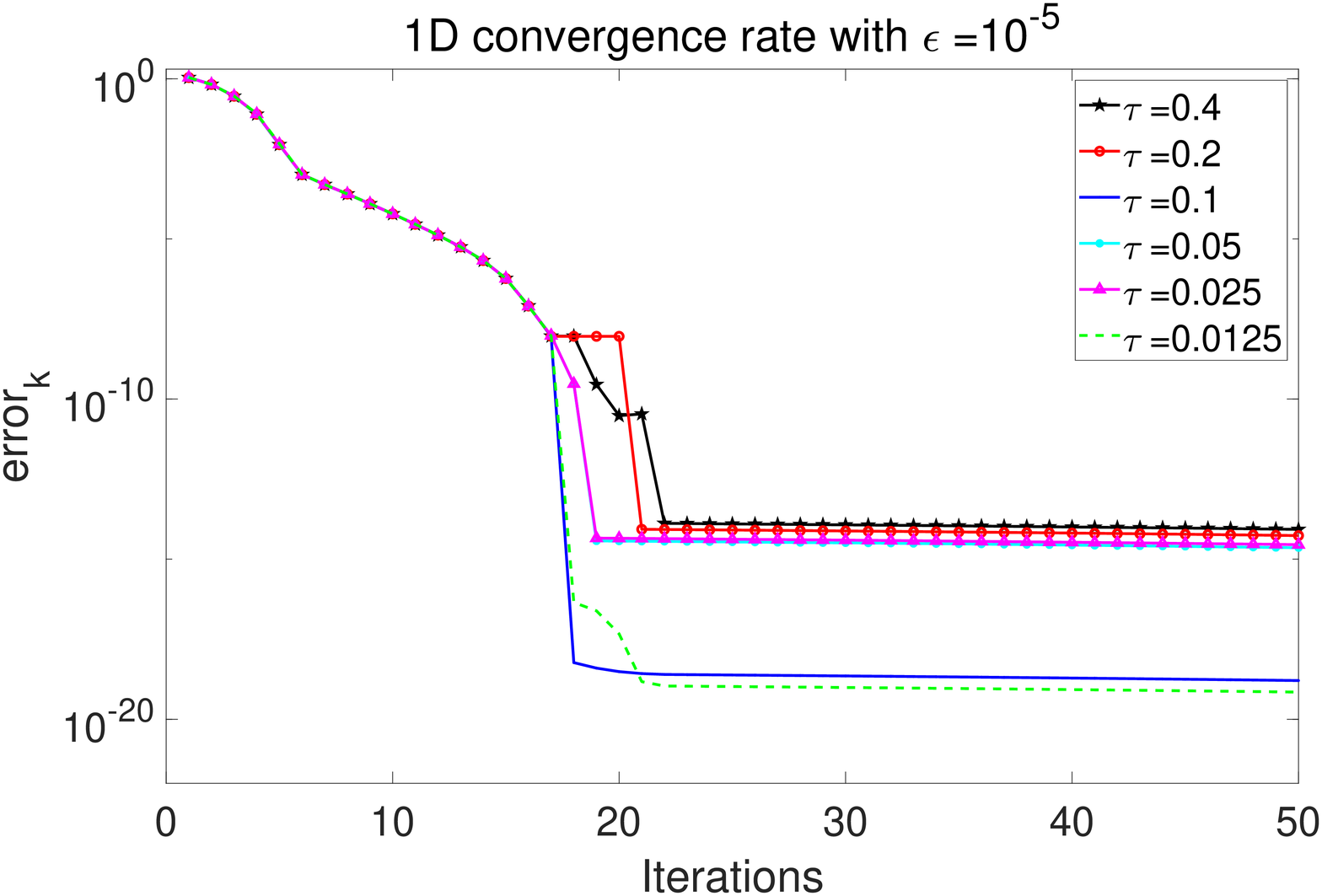}
    \caption{Convergence of Algorithm 2 with different $ \Delta v$ (top), or different $\tau$ (bottom). Top left: $\eps =1$, $\tau=0.05$. Top right: $\eps =1e-5$, $\tau=0.05$. Bottom left: $\eps =1$, $N_v=64$. Bottom right: $\eps =1e-5$, $N_v=64$.}
    \label{fig: mesh search}
\end{figure}
We also show convergence behavior at different time steps. In Fig~\ref{fig: diff_time_step}, in the case when $\eps $ large, it converges slower at beginning for both fix step size method and line search method. As $f$ approaches to the equilibrium, less iterations are required to converge. And in the case $\eps$ is small, it reaches equilibrium in merely one time step, thus we see flat curve after first several time steps, which implies it stay at equilibrium.
\begin{figure}[!ht]
    \centering
    \includegraphics[width=0.45\textwidth]{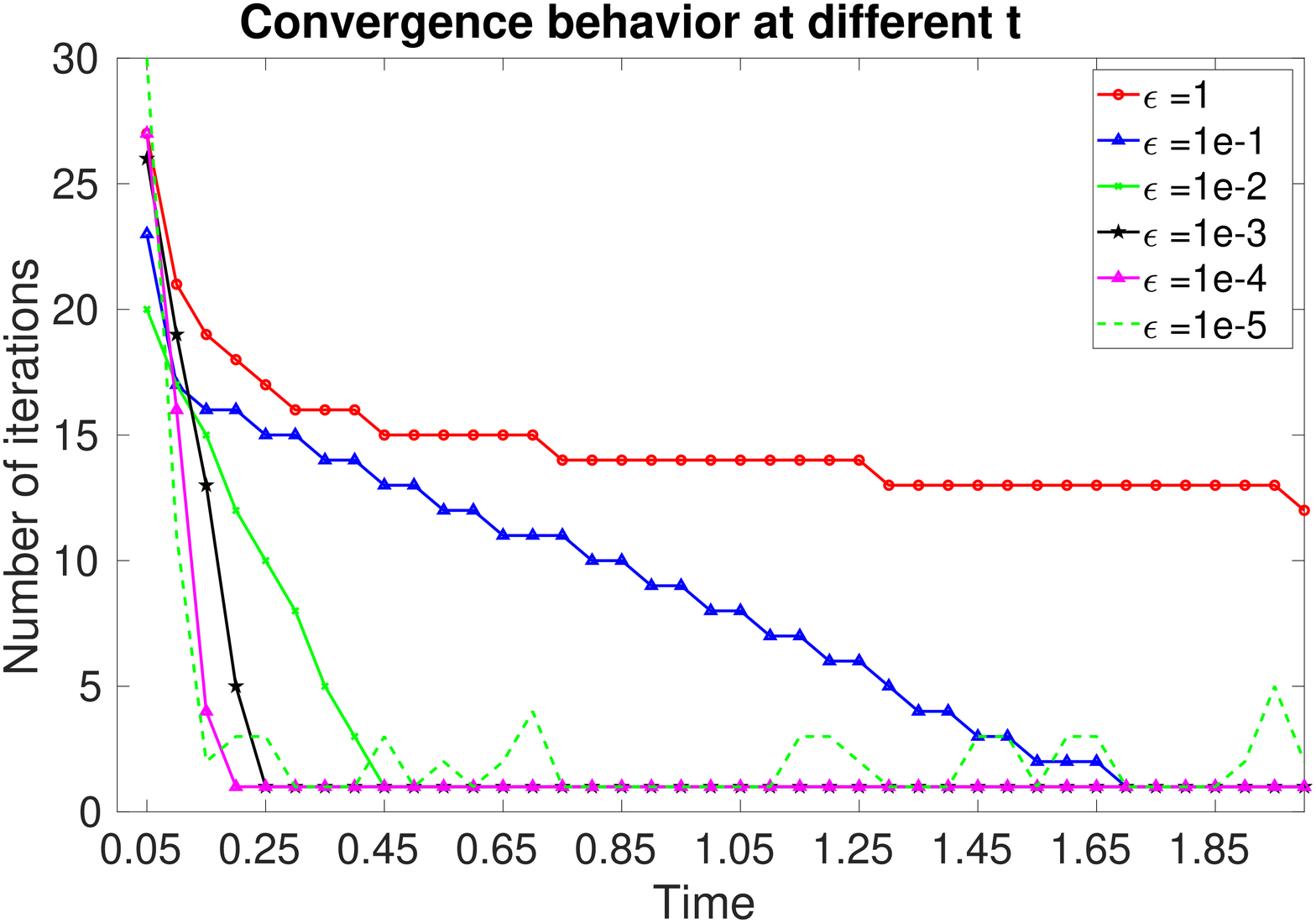}
    \includegraphics[width=0.45\textwidth]{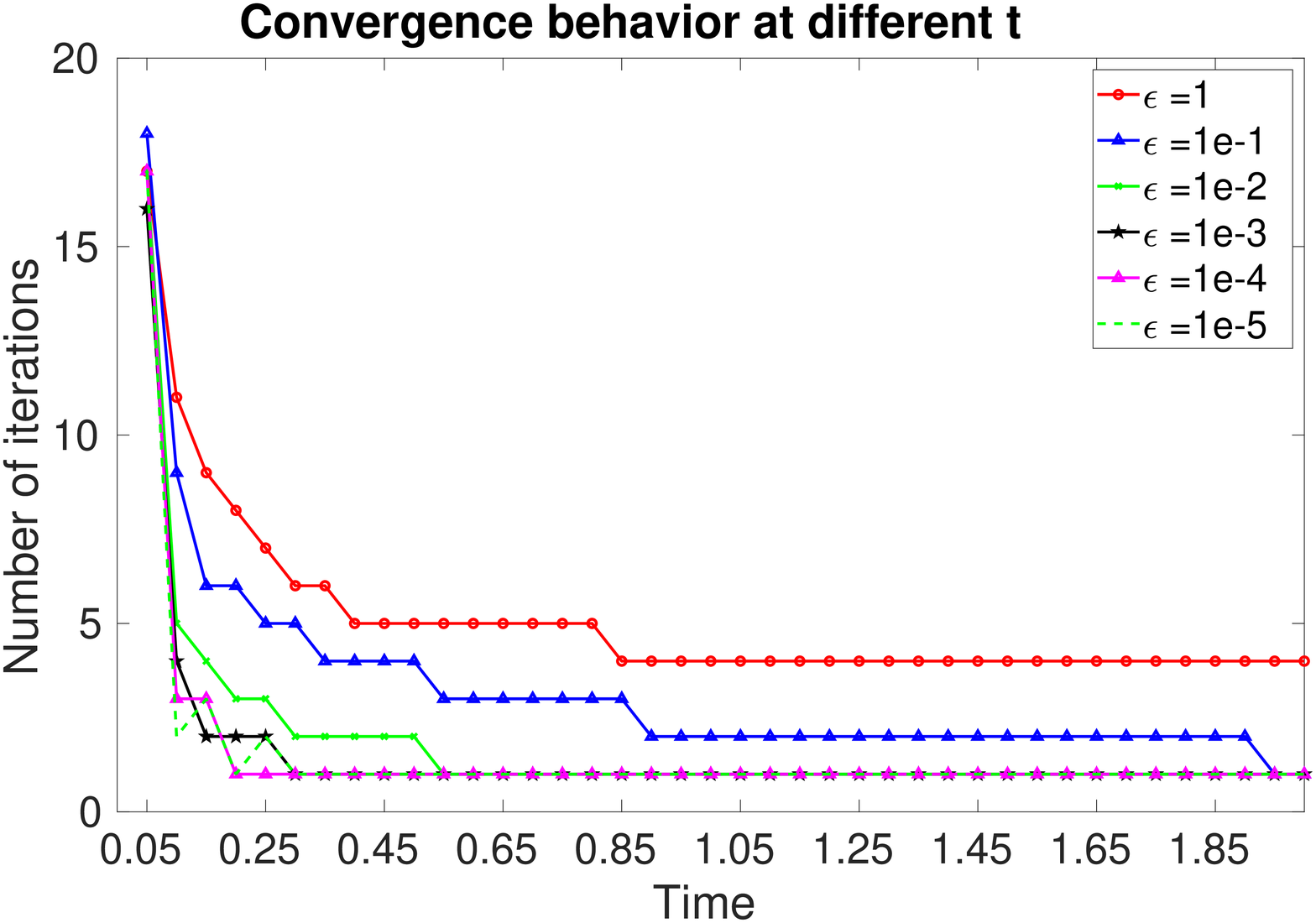}
    \caption{Convergence behavior at different time steps with $\tau=0.05$, $\Delta v=10/64$ and stopping criterion $\delta = 1e-7$. Left figure is generated by Algorithm 1 with $\gamma = 0.5,0.5,0.5,0.4,0.4,0.4$ for $\eps = 1, 1e-1, 1e-2, 1e-3,1e-4,1e-5$ respectively. Right figure is generated by Algorithm 2. }
    \label{fig: diff_time_step}
\end{figure}
\subsubsection{Accuracy}
In this subsection, we test the order of accuracy of our variational scheme with distinct $\eps$. For accuracy in $v$, we consider spatially homogeneous case \eqref{sph} with fix $\tau=0.0063$, and compute the following relative error with decreasing $\Delta v$: 
\[
e_{\Delta v} = \| f_{\Delta v}(v,T) -f_{\frac{\Delta v}{2}}(v,T) \|_1 := \sum_{j=1}^{N_v} |(f_{\Delta v})_j(T) - (f_{\frac{\Delta v}{2}})_j(T)| \Delta v.
\]
The results are gathered in Fig.~\ref{fig:second_order_in_v}, where a uniform second order accuracy is observed. 
\begin{figure}[!ht]
    \centering
    \includegraphics[width=0.6\textwidth]{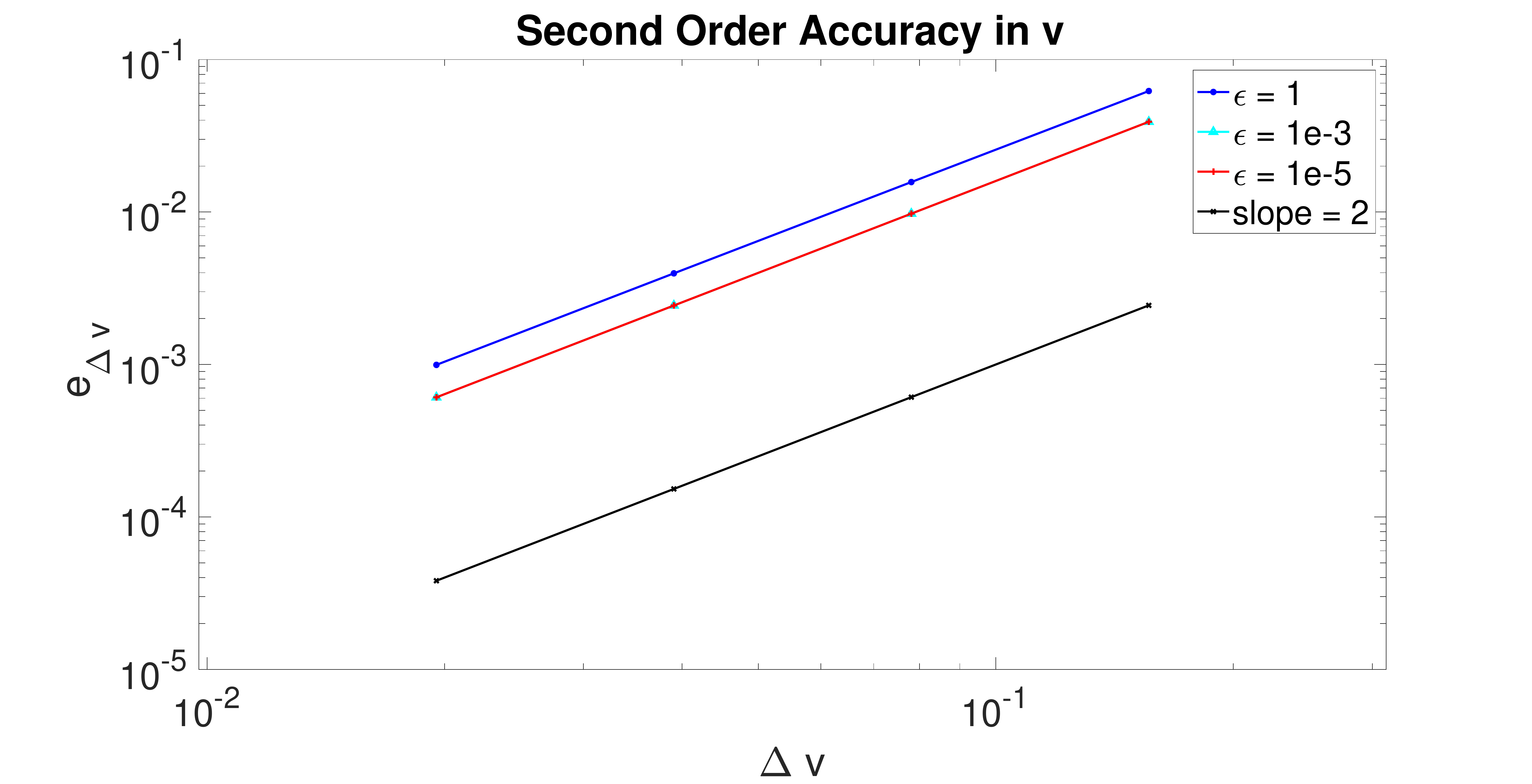}
    \caption{Relative error $e_{\Delta v}$ with  $\Delta v = 10/64, 10/128, 10/256, 10/512, 10/1024$ and fixed $\tau = 0.0063$,  $T=0.1$ and $N_{max} = 1000$. The black line indicates second order accuracy. }
    \label{fig:second_order_in_v}
\end{figure}
To check the accuracy in $x$ and $t$, we consider the spatially inhomogeneous VPFP system \eqref{vpfp} with the following initial condition:
\begin{equation} \label{IC}
 \rho^{0}(x)={\sqrt{2 \pi}}(2+\cos (2 \pi x)),~ f^{0}(x, v)=\frac{\rho^{0}(x)}{2\sqrt{2 \pi}}\left(e^{-\frac{|v+1.5|^{2}}{2}}+e^{-\frac{|v-1.5|^{2}}{2}}\right), 
 h(x)=\frac{5.0132}{1.2661} e^{\cos (2 \pi x)}.
\end{equation} 
and compute the relative error:
\[
e_{\tau} = \| f_{\tau}(T,x,v) -f_{\frac{\tau}{2}}(T,x,v) \|_1 := \sum_{i=1}^{N_x} \sum_{j=1}^{N_v} |(f_{\tau})_{i,j}(T) - (f_{\frac{\tau}{2}})_{i,j}(T)| \Delta v \Delta x.
\]
and
\[
e_{\Delta x} = \| f_{\Delta x}(T,x,v) -f_{\frac{\Delta x}{2}}(T,x,v) \|_1 := \sum_{i=1}^{N_x} \sum_{j=1}^{N_v} |(f_{\Delta x})_{i,j}(T) - (f_{\frac{\Delta x}{2}})_{i,j}(T)| \Delta v \Delta x.
\]
As expected, we observe first order accuracy in time and second order accuracy in space, both uniformly in $\eps$. The results are shown in Fig.~\ref{fig:order of tau and x}.
\begin{figure}[!ht]
    \centering
    \includegraphics[width=0.45\textwidth]{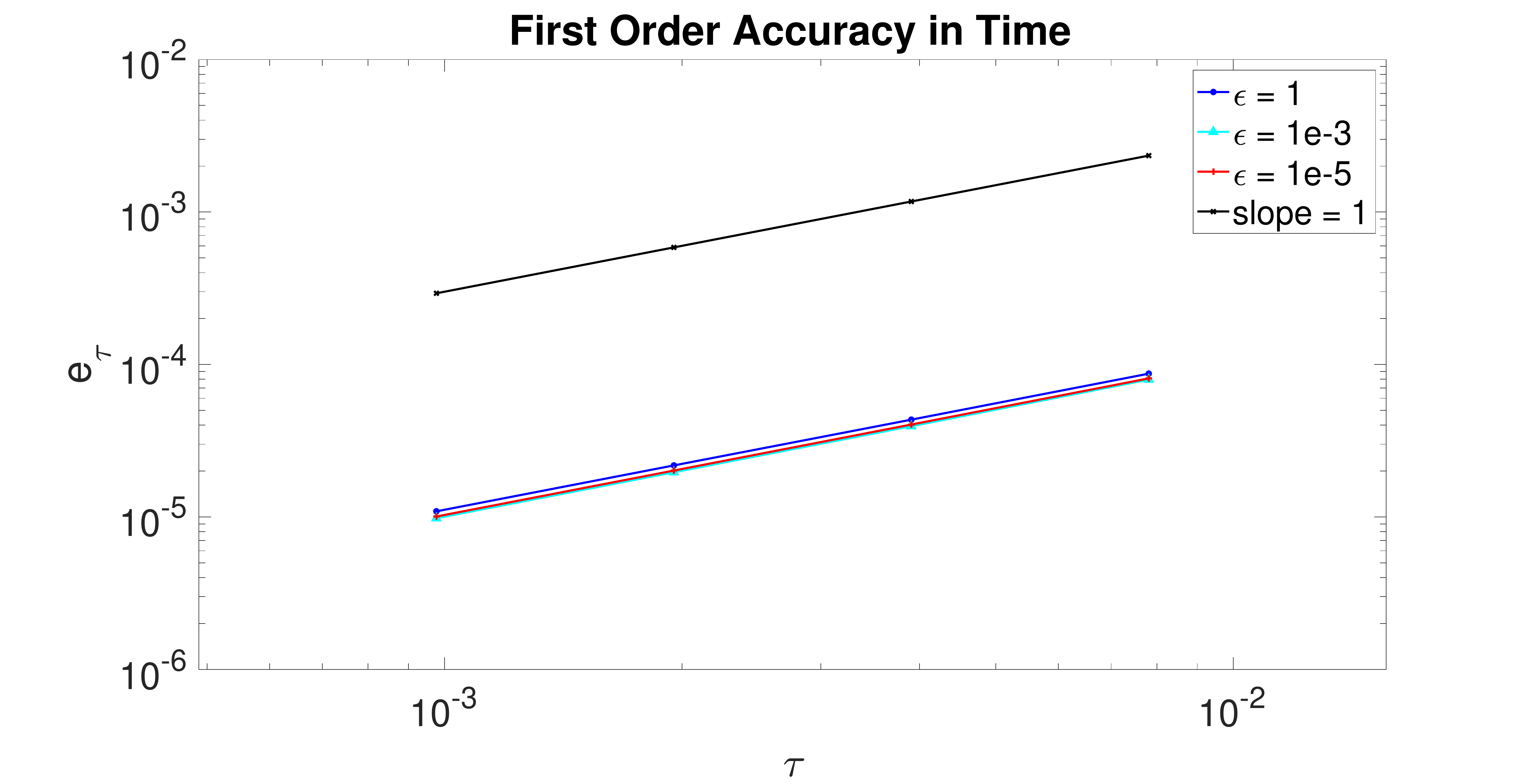}
    \includegraphics[width=0.45\textwidth]{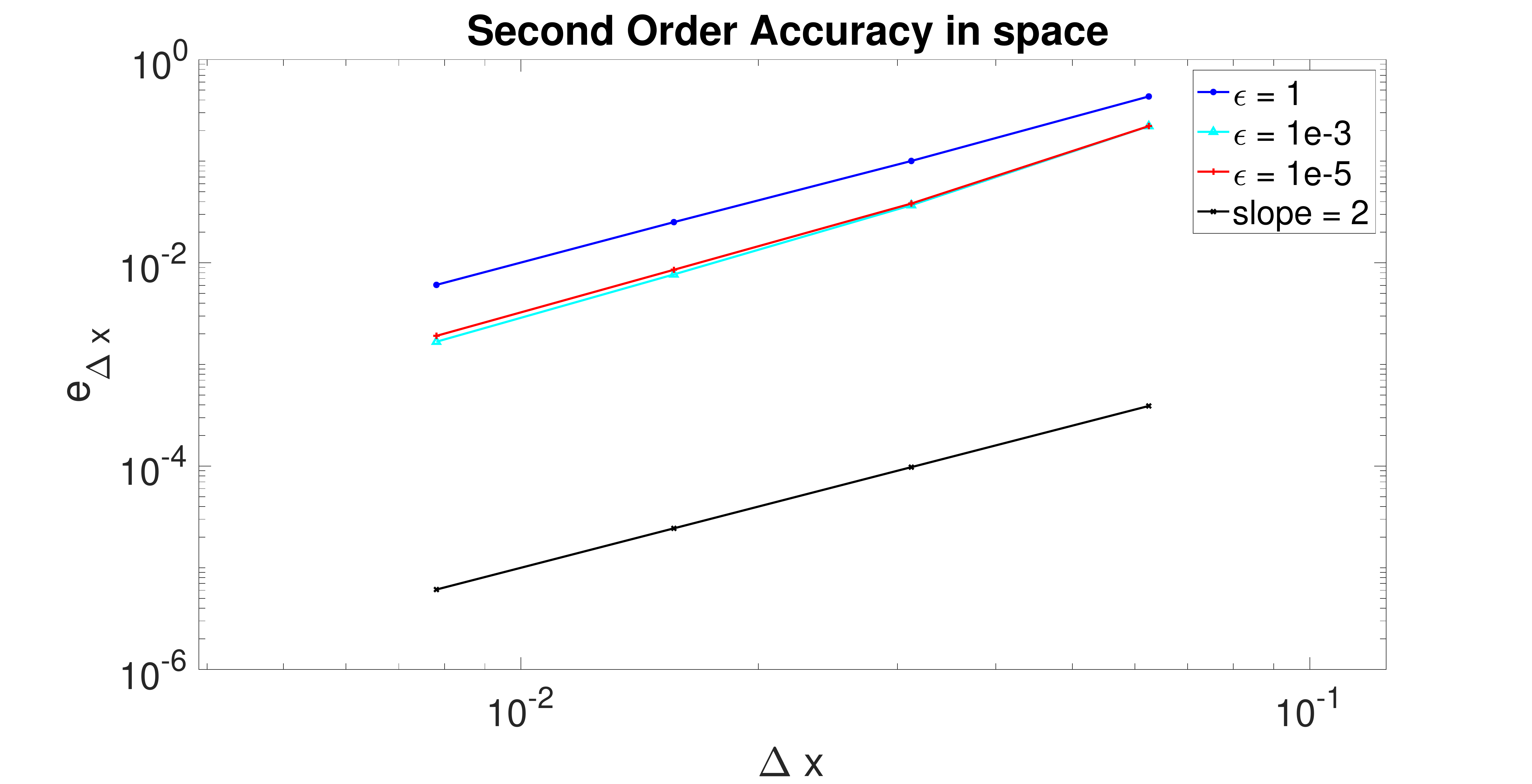}
    \caption{Left: relative error $e_{\tau}$ with fixed $\Delta v = 12/64$, $\Delta x =1/16 $, and varying $\tau =  \Delta x /8 , \Delta x /16 , \Delta x /32 , \Delta x /64 , \Delta x /128 $. The black line indicates first order accuracy. Right: relative error $e_{\Delta x}$ with fixed $\Delta v = 10/64$, and varying $\Delta x = 1/16, 1/32, 1/64, 1/128, 1/256$ and $\tau = \Delta x / 8$. The black line indicates second order accuracy.  In both cases, $T=0.1$, $N_{max} = 1000$. }
    \label{fig:order of tau and x}
\end{figure}

\subsubsection{The asymptotic preserving property}\label{vpfp_ap}
This section is devoted to check the asymptotic property of our scheme. For this purpose, 
consider the spatially inhomogeneous VPFP system \eqref{vpfp} with the following initial condition \eqref{IC}. The computational domain is chosen as $x \in [0,1] $ and $v\in [-6,6]$. At every time $t_n=n \cdot  \tau$, we consider the $l_1$ distance between our solution $f^n$ with the local equilibrium $M^n$ as
\[
\|f^n-M^n \|_1= \sum_{i,j} |f^n(x_i,v_j)-M_i^n(v_j)|  \Delta x  \Delta v.
\]
Fig.~\ref{fig:DR1D} shows that this distance decreases at the order of $\mathcal O (\eps)$ with decreasing $\eps$, which confirms asymptotic property.
\begin{figure}[!ht]
    \centering
    \includegraphics[width=0.55\textwidth]{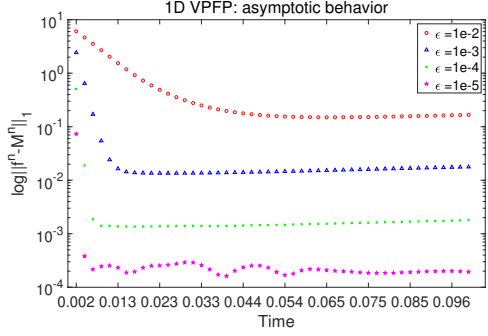}
    \caption{Evolution of distance between our solution $f$ and the local equilibrium $M$ and with decreasing $\eps$. Here $N_{max} = 1000$, $\Delta x =1/64 $, $\Delta v=12/64$, and $\tau = \Delta x/16 $.}
    \label{fig:DR1D}
\end{figure}

\subsubsection{Entropy decay}
In this section, we first consider the Vlasov-Fokker-Planck (VFP) system
\begin{equation}
\partial_t f +  v \nabla_x f -   \nabla_x \phi_0 \cdot \nabla_v f =  \nabla_v \cdot (vf + \nabla_v f) \nonumber
\end{equation}
with a fix external potential $\phi_0(x)$ and check the entropy decay property. The initial condition is taken to be: 
\begin{align*}
& \rho^{0}(x)={\sqrt{2 \pi}}(2+\cos (2 \pi x)), \quad f^{0}(x, v)=\frac{\rho^{0}(x)}{2\sqrt{2 \pi}}\left(e^{-\frac{|v+1.5|^{2}}{2}}+e^{-\frac{|v-1.5|^{2}}{2}}\right), \\
& \phi_0(x) = \frac{1}{5} \sin (2 \pi x).
\end{align*}
According to $\cite{bouchut1995long, DV01, GMM17}$, $f$ converges exponentially fast to the global equilibrium
\[
f_{\infty}= \frac{2 \sqrt{2 \pi} }{\int_0^1 e^{-\frac{1}{5 } \sin x} \rd x} e^{-\frac{v^2}{2}-\frac{1}{5 } \sin (2 \pi x)}.
\]
To see this, we compute the evolution of the relative entropy
\begin{equation} \label{entropyff}
E(f|f_{\infty}) = \int \int f \log \frac{f}{f_{\infty}} \rd v \rd x
\end{equation}
and display the results in Fig.~\ref{fig:entropy}. As shown, the relative entropy decays in time with an exponential rate at the beginning. This decay, however, is flattened at around $10^{-2}$, which indicates a discrepancy between $f$ and $f_\infty$.
On the right figure of Fig.~\ref{fig:entropy}, we see that this discrepancy decays with finer grids, which implies that our scheme does not preserve the global equilibrium exactly, but only up to some numerical error. 

\begin{figure}[!ht]
  \centering
  \includegraphics[width=0.45\textwidth]{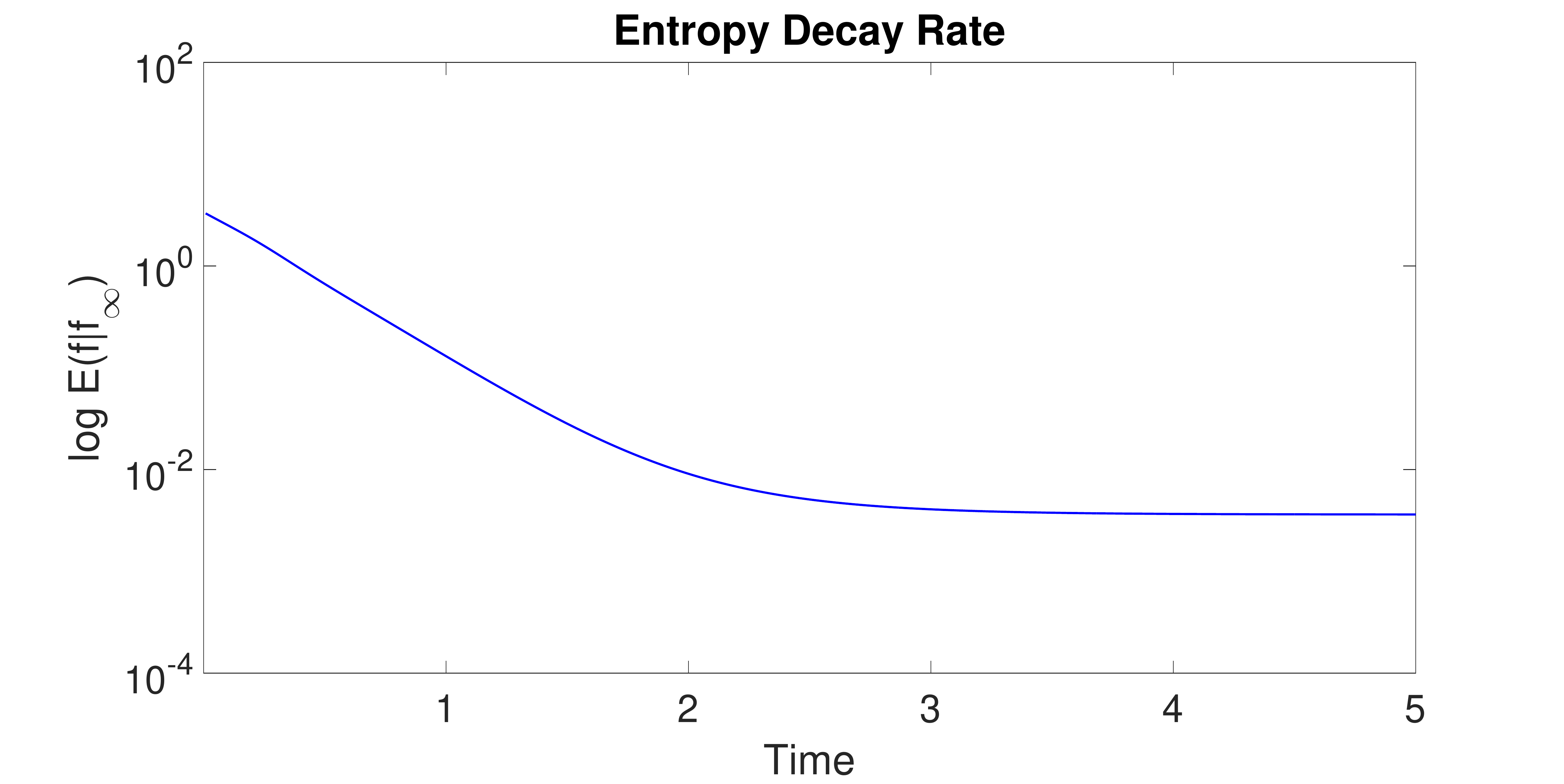}
  \centering
  \includegraphics[width=0.45\textwidth]{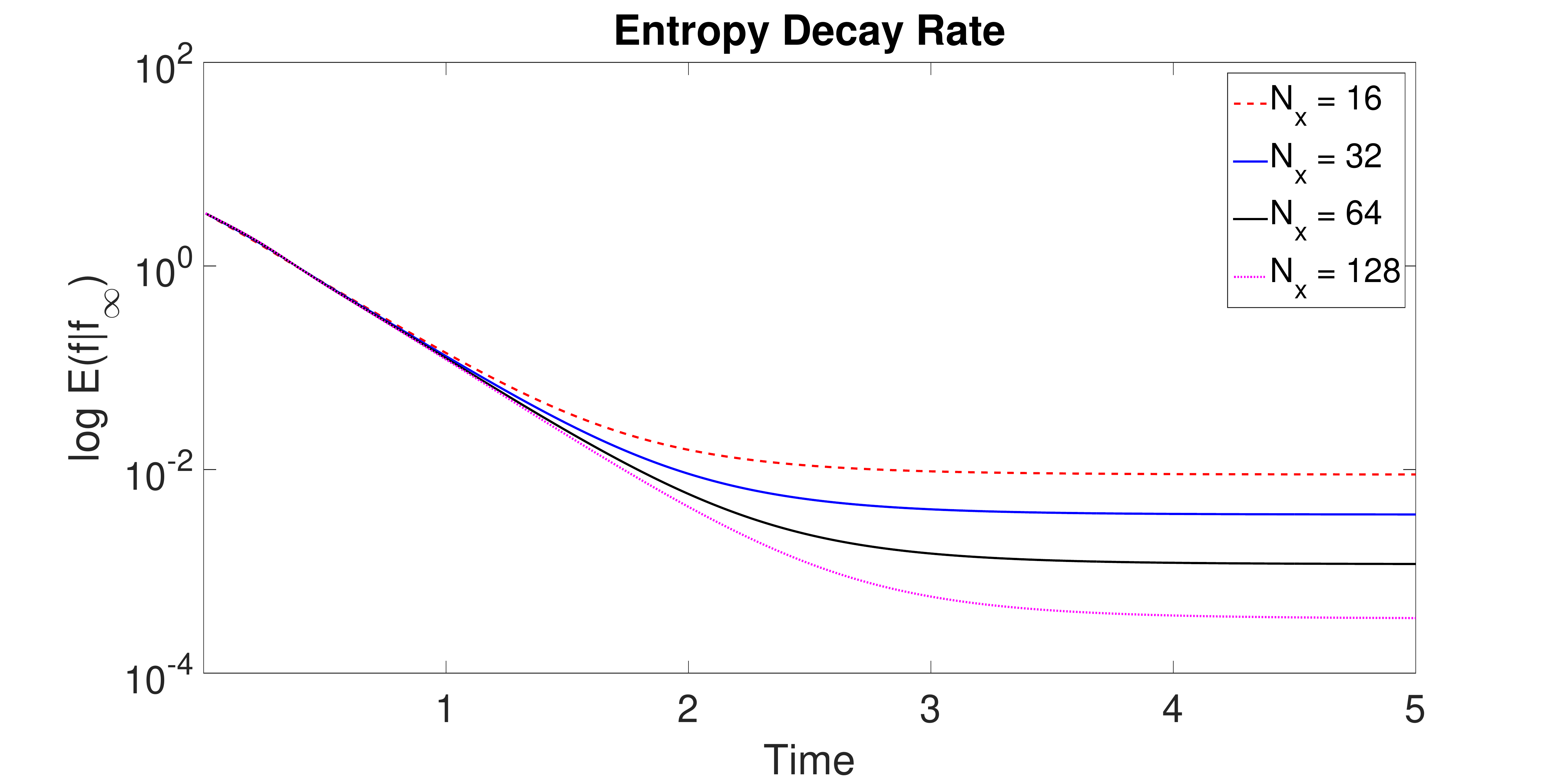}
  \caption{Left: exponential decay of entropy with $N_x = 32$. Right: entropy decay with different $N_x$. Here $x \in [0,1]$, $v \in [-6,6]$, $N_{max} = 1000$, $\Delta x = 1/N_x$, $\Delta v =12/64$, and $\tau = \Delta x /15$.  }
  \label{fig:entropy}
\end{figure}

Next we consider VPFP system $\eqref{vpfp}$ with $\eps =1$ and check the entropy decay. The initial data is taken the same as that in Section \ref{vpfp_ap},and the computational domain is chosen as $x \in [0,1] $ and $v\in [-6,6]$. In this case, we do not have an explicit formula for $f_\infty$, so we compute it numerically by running our scheme for long enough time until it converges to a steady state. Fig.~\ref{fig:entropy_vpfp} then displays the exponential decay of the relative entropy \eqref{entropyff}, as partially predicted in \cite{MN06}. 
\begin{figure}[!ht]
  \centering
  \includegraphics[width=0.6\textwidth]{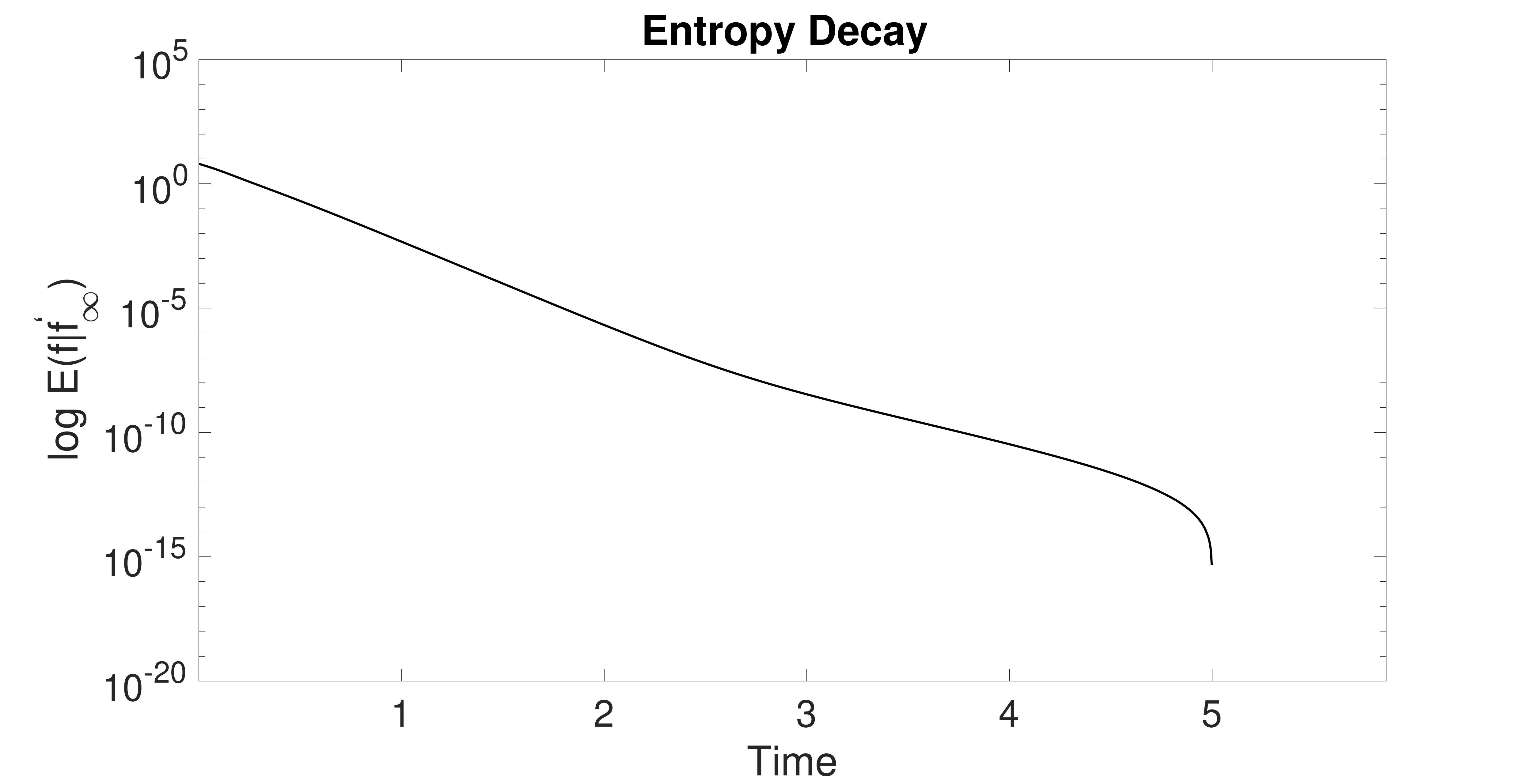}
  \centering
  \caption{Exponential decay of the relative entropy $E(f|f_\infty)$ with $N_{max} =1000$, $\Delta x =1/32$, $\Delta  v =  12/64$ and $\tau = \Delta x/16$. $f_\infty$ is computed at $t=5$.}
  \label{fig:entropy_vpfp}
\end{figure}

\subsubsection{Mixing regime}
In this section, we test the performance of our scheme when $\eps$ has a mixing magnitude:
\[
\epsilon(x)=\left\{\begin{array}{ll}{\epsilon_{0}+\frac{1}{2}(\tanh (5-10 x)+\tanh (5+10 x))} & {x \leq 0.3} ,\\ {\epsilon_{0}} & {x>0.3},\end{array}\right.
\]
with $\eps_0=10^{-3}$. The initial condition is chosen as:
\[
\rho^{0}(x)=\frac{\sqrt{2 \pi}}{6}(2+\sin ( \pi x)), \quad f^{0}(x, v)=\frac{\rho(x)}{\sqrt{2 \pi}} e^{-\frac{\left|v+\phi_{x}^{0}\right|^{2}}{2}},
\quad h(x)=\frac{1.6711}{2.5321} e^{\cos (\pi x)}.
\]
In Fig.~\ref{fig:mix}, we plot the shape of the solution at two different times $t=0.2$ and $t=0.3$, and compare our solution with the reference solution obtained by explicit solver, which uses the second order Runge-Kutta discretization in time and MUSCL scheme for space discretization. Here a good agreement between two solutions is observed, which confirms the efficiency of our method.   
\begin{figure}[!ht]
  \centering
  \includegraphics[width=0.48\textwidth]{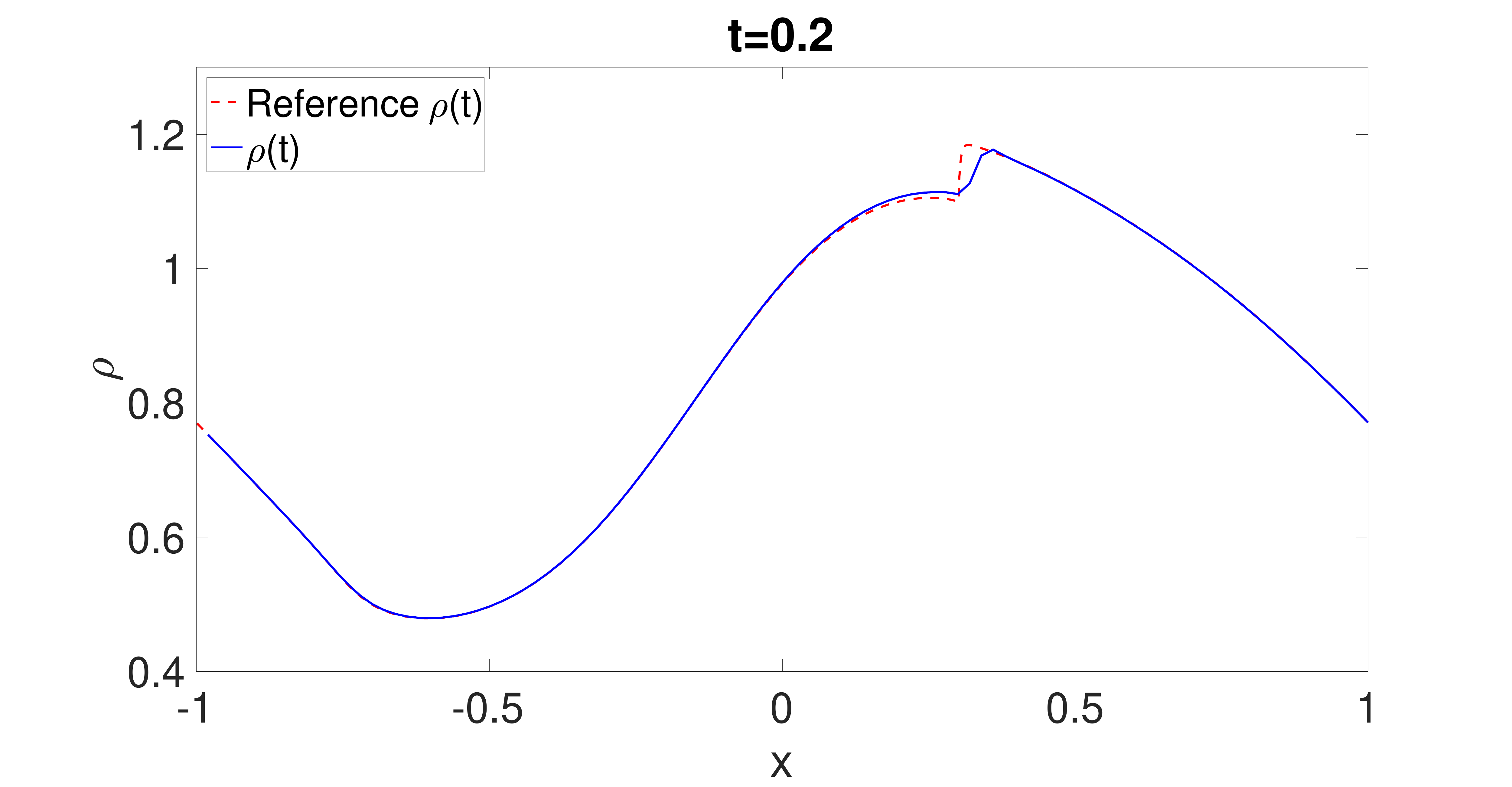}
  \includegraphics[width=0.48\textwidth]{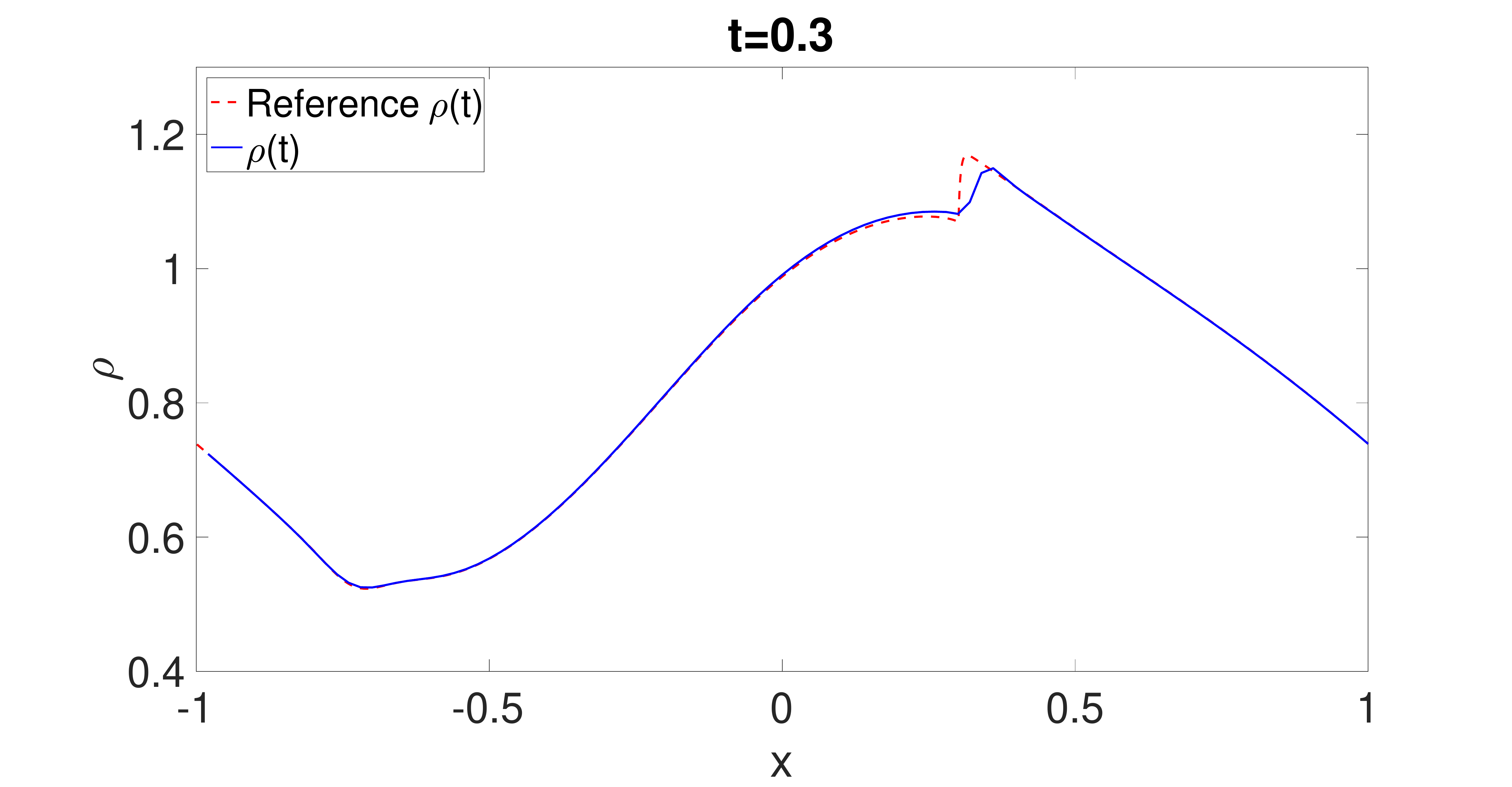}
  \caption{Comparison of our solution with the reference solution obtained by explicit solver. Here we use $x \in [0,1]$, $v \in [-6,6]$, $N_x = 100$, $\Delta x =2/N_x $, $\Delta v=12/64$, and $\tau = \Delta x/15 $ for our method. Use $N_x = 2000$, $\Delta x =2/N_x$, $\Delta v=12/64$, $\tau = \min \{\frac{\Delta x}{\max |v|}, \eps_0 \Delta x, \eps_0 \Delta v^2\}/5 = 7.0313e-6$ for the explicit reference solver. }
  \label{fig:mix}
\end{figure}

\subsection{2D in velocity}
\subsubsection{Convergence rate}
For the two dimensional case, we start again by checking the convergence of our proximal quasi-Newton method to the spatially homogeneous case with varying $\eps$. Here we consider the initial condition with four bumps: 
\[
f_0(v) = e^{(v_2-1)^2-(v_1-1)^2}+\frac{1}{\pi}e^{(v_2+1)^2-(v_1+1)^2}+\frac{2}{\pi}e^{(v_2-1)^2-(v_1+1)^2}+\frac{4}{\pi}e^{(v_2+1)^2-(v_1-1)^2},
\]
where $v \in [-5,5] \times [-5,5]$. In Fig.~\ref{fig:PN_2D_conver_search}, we compute the relative error \eqref{errork} with respect to $k$, where $u^*$ is obtained by running the same algorithm with 110 iterations. 
\begin{figure}[!ht]
    \centering
    \includegraphics[width=0.55\textwidth]{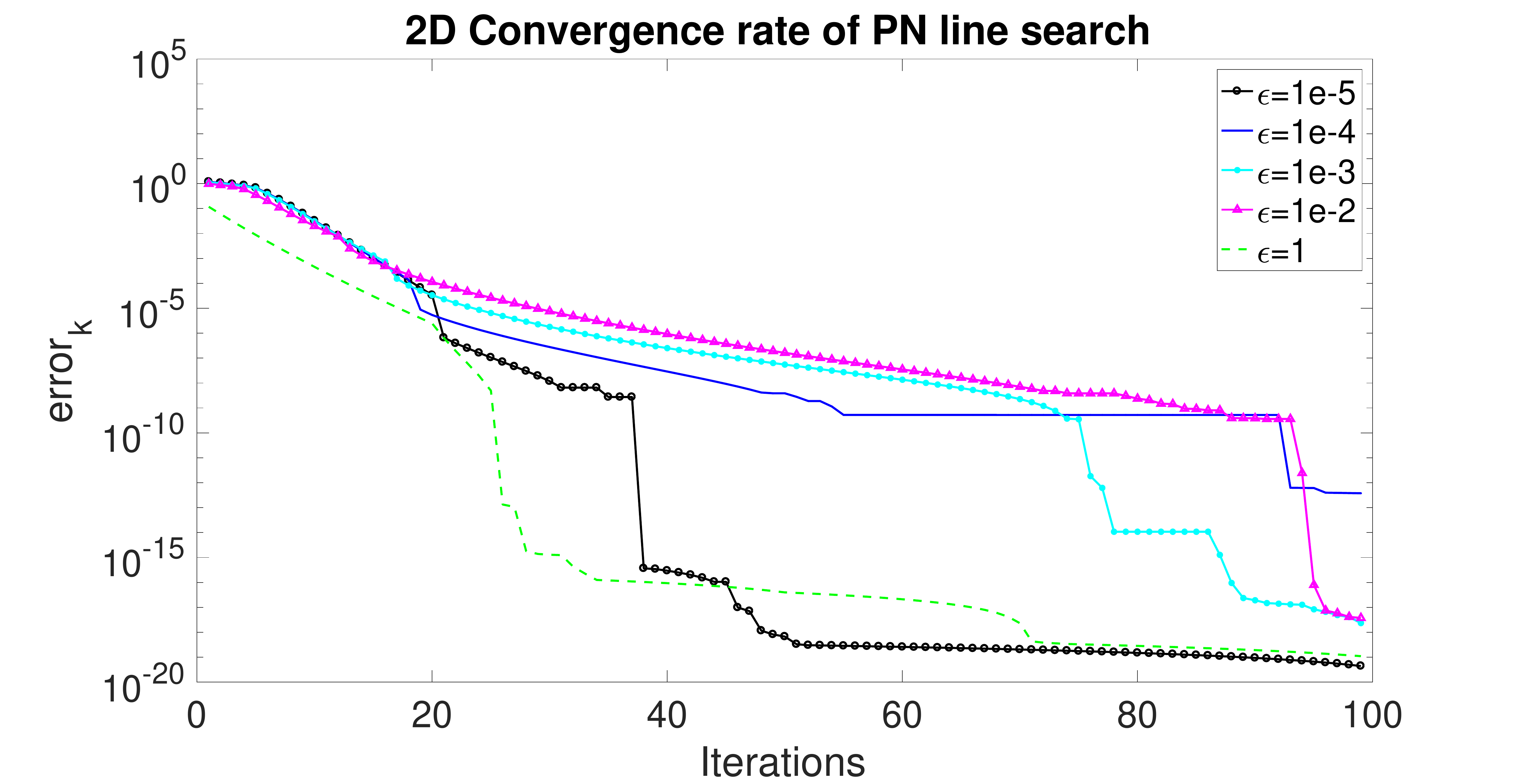}
    \caption{Convergence of Algorithm 2 with varying $\eps$. Here $N_v=40$, $\tau =0.05$.}
    \label{fig:PN_2D_conver_search}
\end{figure}

\subsubsection{Evolution of two semi-torus like initial condition}
In this section, we plot the evolution of VPFP system using Algorithm 2 with the following initial condition: 
\begin{align*}
f_0(v) =~& 1.5\left(1+\left( \sqrt{(v_1-2)^2+(v_2-2)^2}-2\right)^2\right)^{-10} \\
&+2\left(1+\left( \sqrt{(v_1+2)^2+(v_2+2)^2}-2\right)^2\right)^{-10}
\end{align*}
in Fig.~\ref{fig:2D00}. An evolving to the equilibrium and exponential convergence in entropy is observed. 

\begin{figure}[!ht]
{\includegraphics[width=0.45\textwidth]{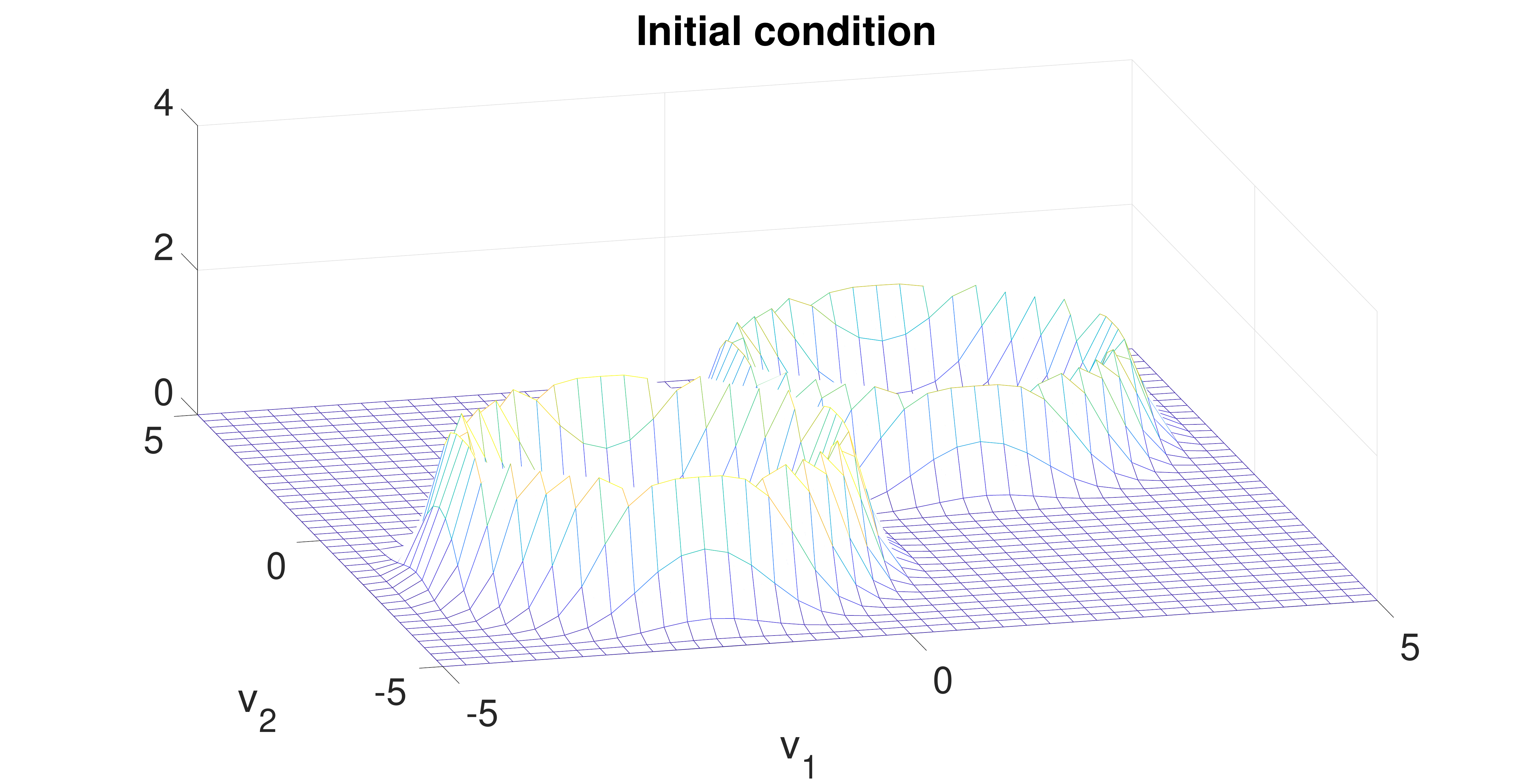}}
{\includegraphics[width=0.45\textwidth]{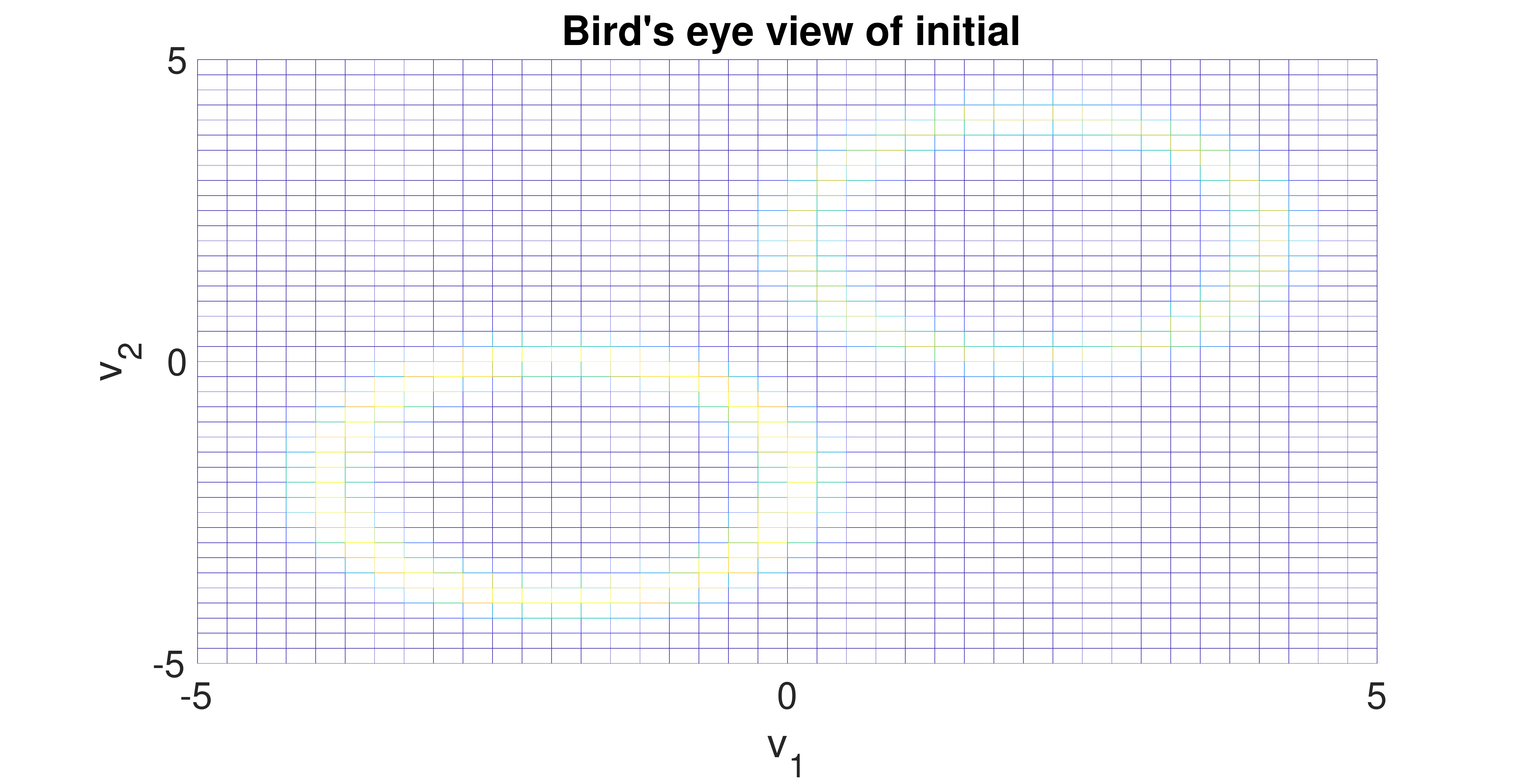}}
  \vfill
{\includegraphics[width=0.45\textwidth]{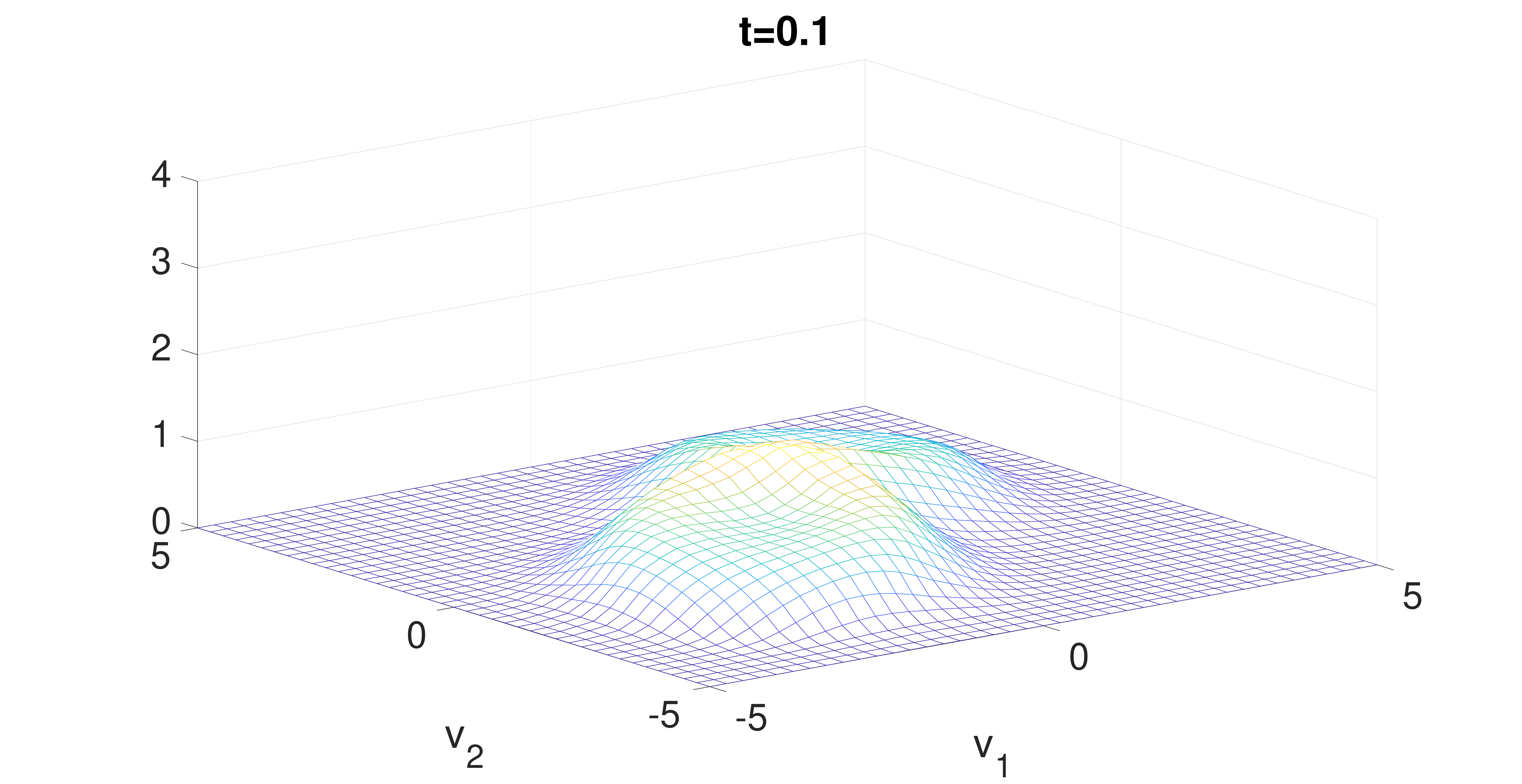}}
{\includegraphics[width=0.45\textwidth]{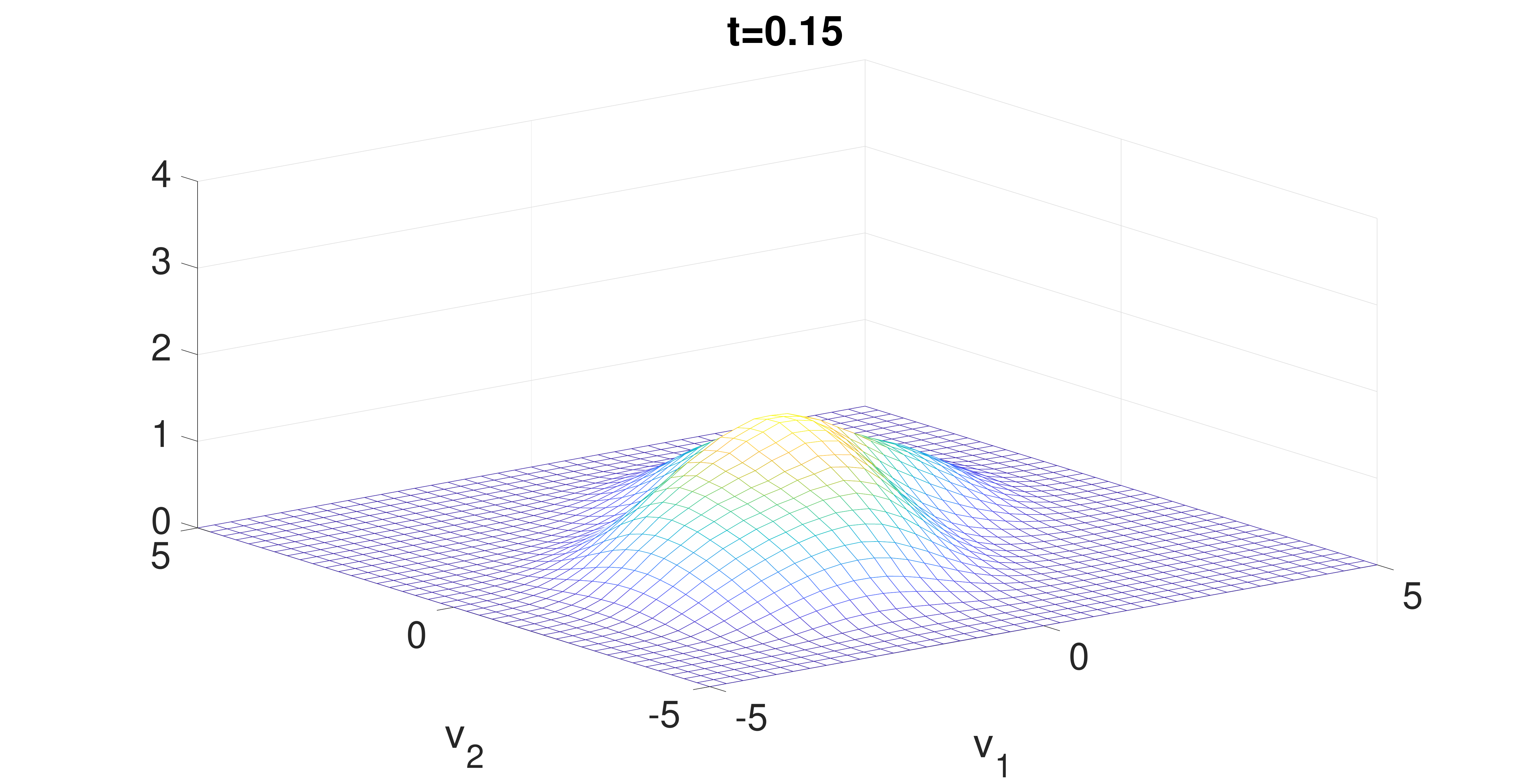}}
  \vfill
{\includegraphics[width=0.45\textwidth]{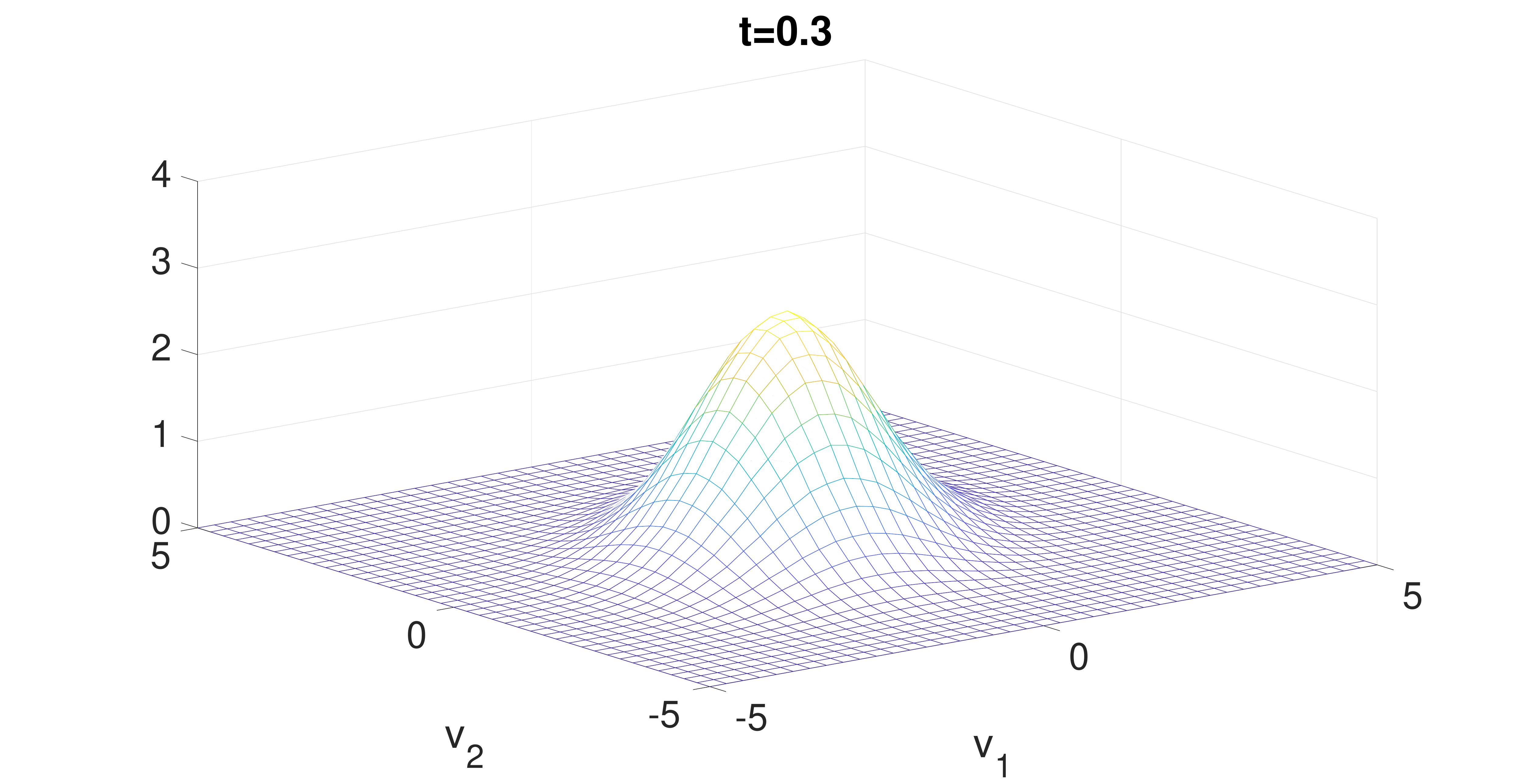}}
{\includegraphics[width=0.45\textwidth]{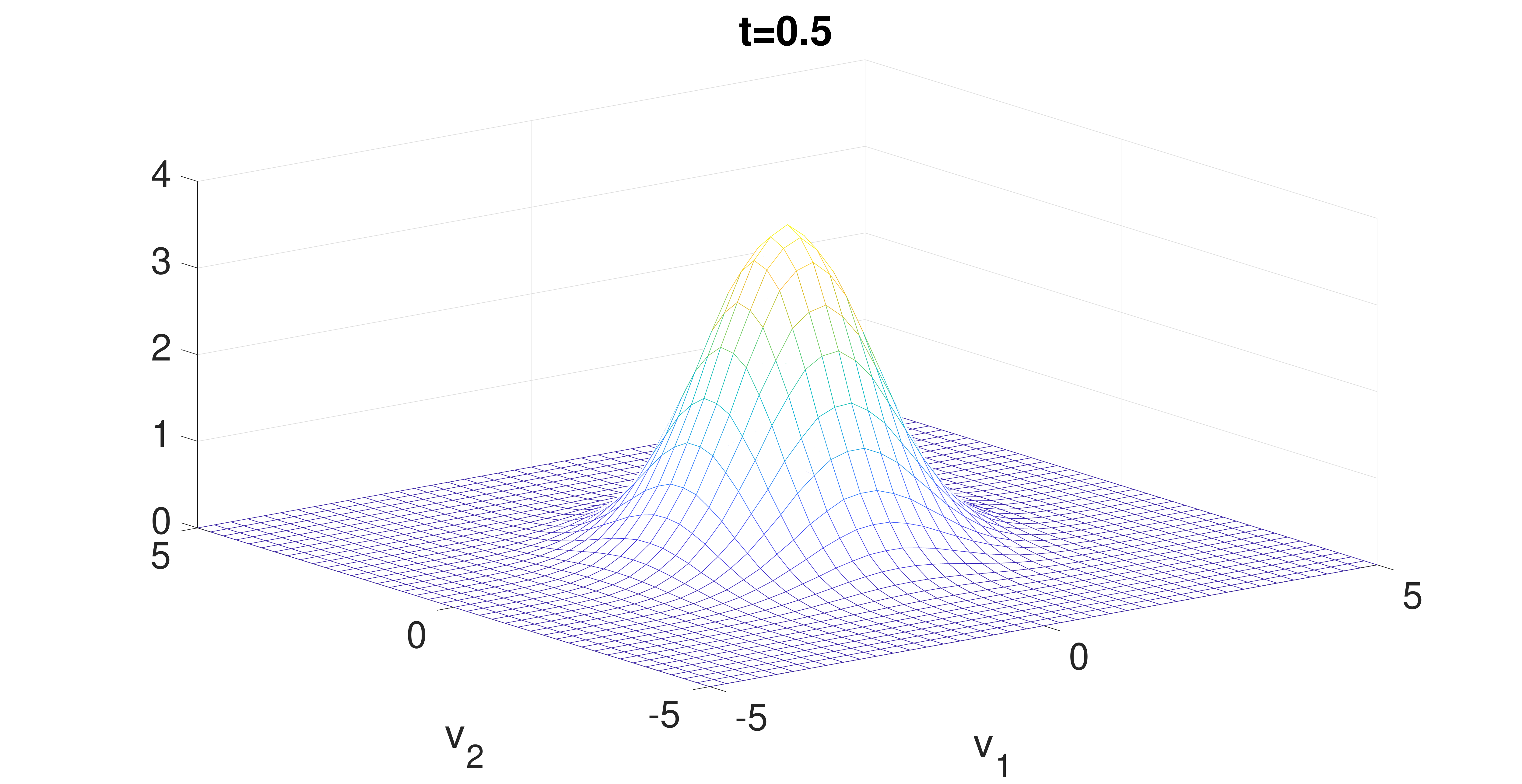}}
  \vfill
{\includegraphics[width=0.45\textwidth]{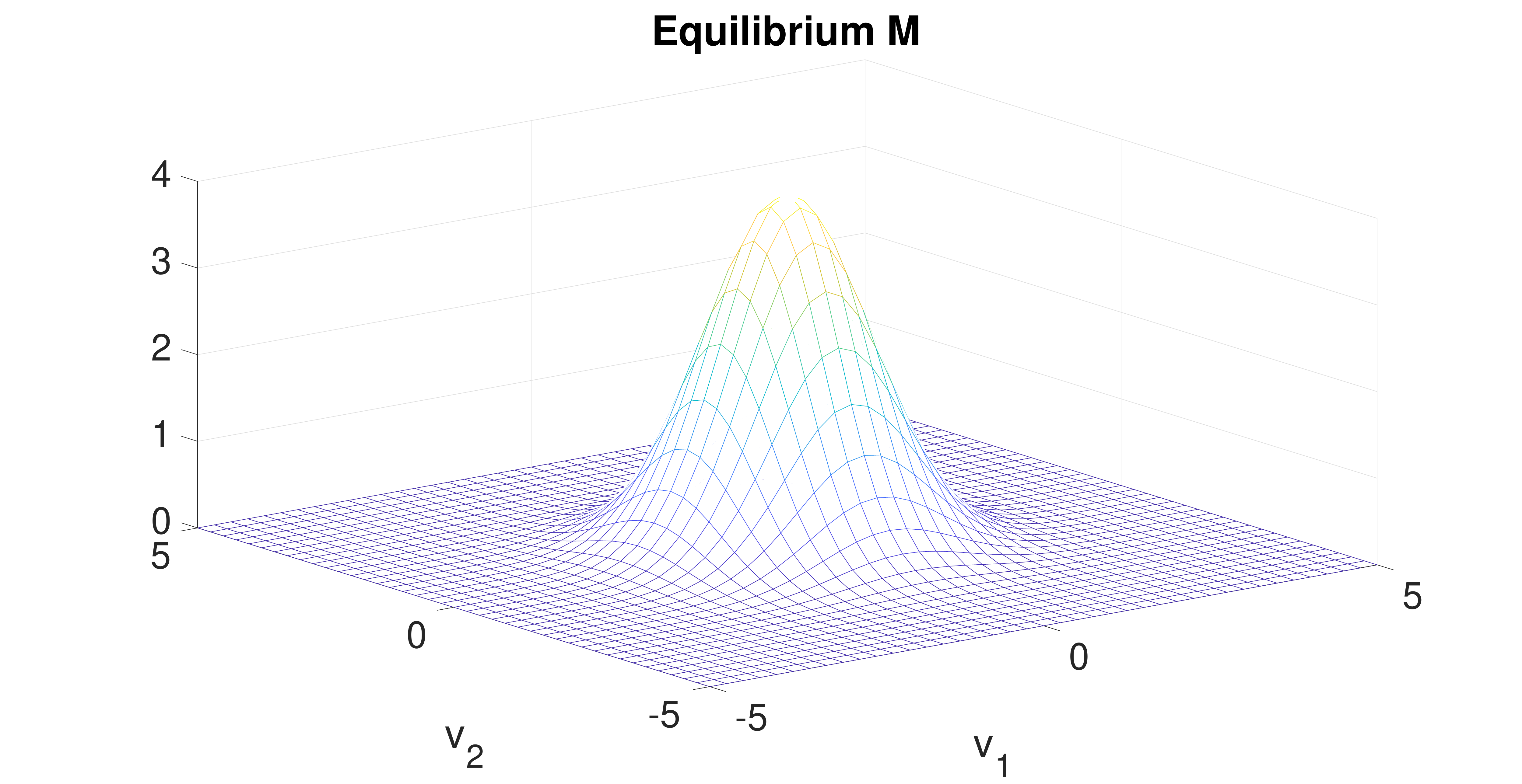}}
{\includegraphics[width=0.45\textwidth]{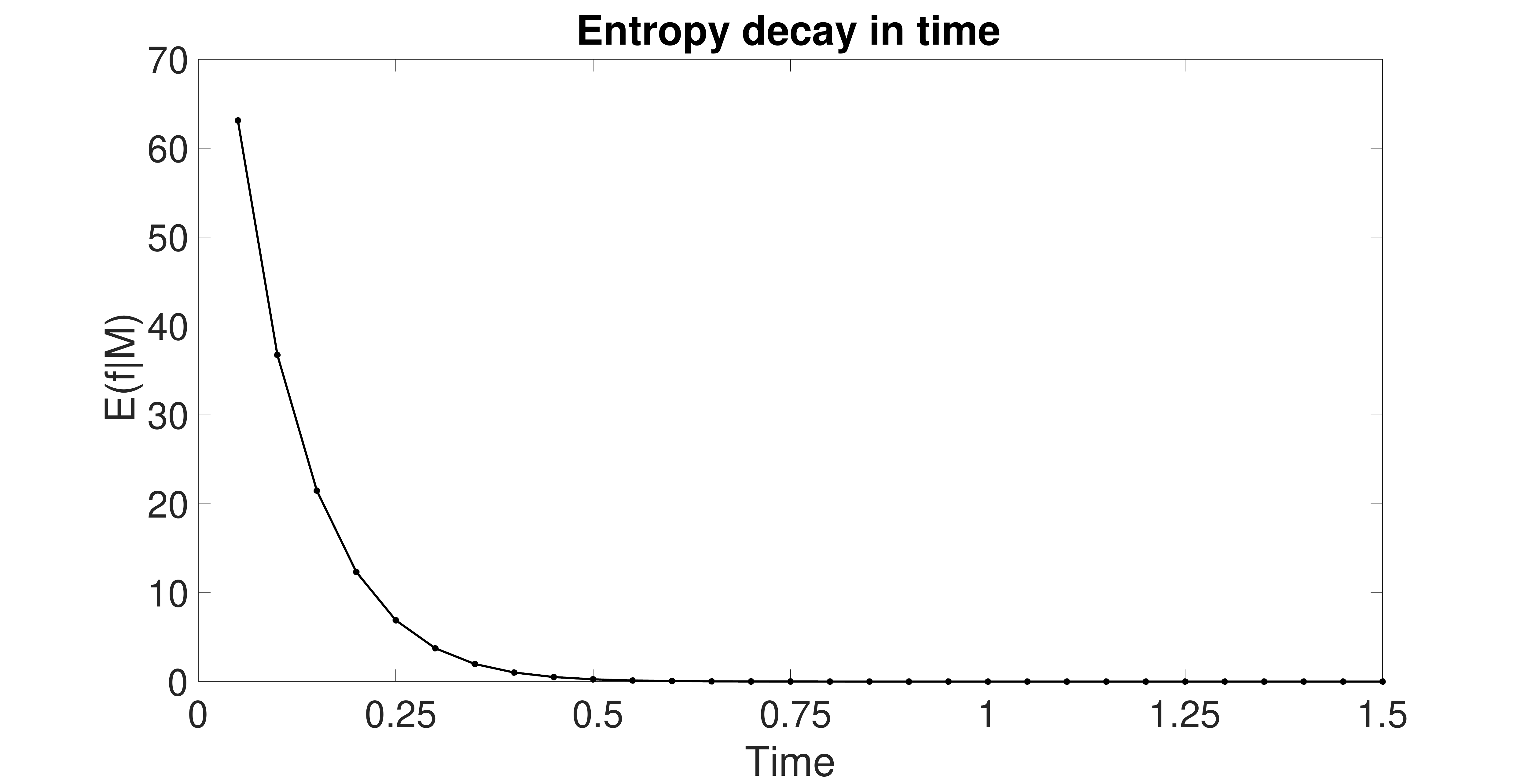}}
 \caption{Evolution of $f(t,v)$. Top two are initial states in different viewpoints; second and third rows are the evolution along time; bottom are equilibrium and evolution of entropy in time. Here we use $v \in [-5,5] \times [-5,5]$, $\tau = 0.05$, $N_{max} = 1000$, $\Delta v = 0.25$, $\delta = 10^{-7}$ and $\eps = 0.2$.}
 \label{fig:2D00}
\end{figure}

\subsubsection{The asymptotic preserving property}
Consider $1d_x \times 2d_v$ VPFP system with initial condition 
\[
f_0(v) = \frac{\rho^0(x)}{4\pi}[ e^{{(v_2-2)^2-(v_1-2)^2}}+e^{(v_2+2)^2-(v_1+2)^2}+e^{(v_2-2)^2-(v_1+2)^2}+e^{(v_2+2)^2-(v_1-2)^2}]\,,
\]
where $v \in [-5,5] \times [-5,5]$ and 
\[
\rho^{0}(x)=\frac{\sqrt{2 \pi}}{2}(2+\cos (2 \pi x)), \quad h(x)=\frac{5.0132}{1.2661} e^{\cos (2 \pi x)}, \quad x \in (0,1)\,.
\]
As in the one dimensional case, we compute the $l_1$ distance between our solution and the local equilibrium at each time $t_n$ as
\[
\|f^n-M^n\|_1 = \sum_{i,j,k} |f^n(x_i,v_j,v_k)-M_i^n(v_j,v_k)|  \Delta x \Delta v^2 .
\]
In Fig.~\ref{fig:AP2D}, we again observe an order $\mathcal O(\eps)$ distance with decreasing $\eps$, which indicate the asymptotic preserving property of our scheme. 
\begin{figure}[!ht]
    \centering
    \includegraphics[width=0.55\textwidth]{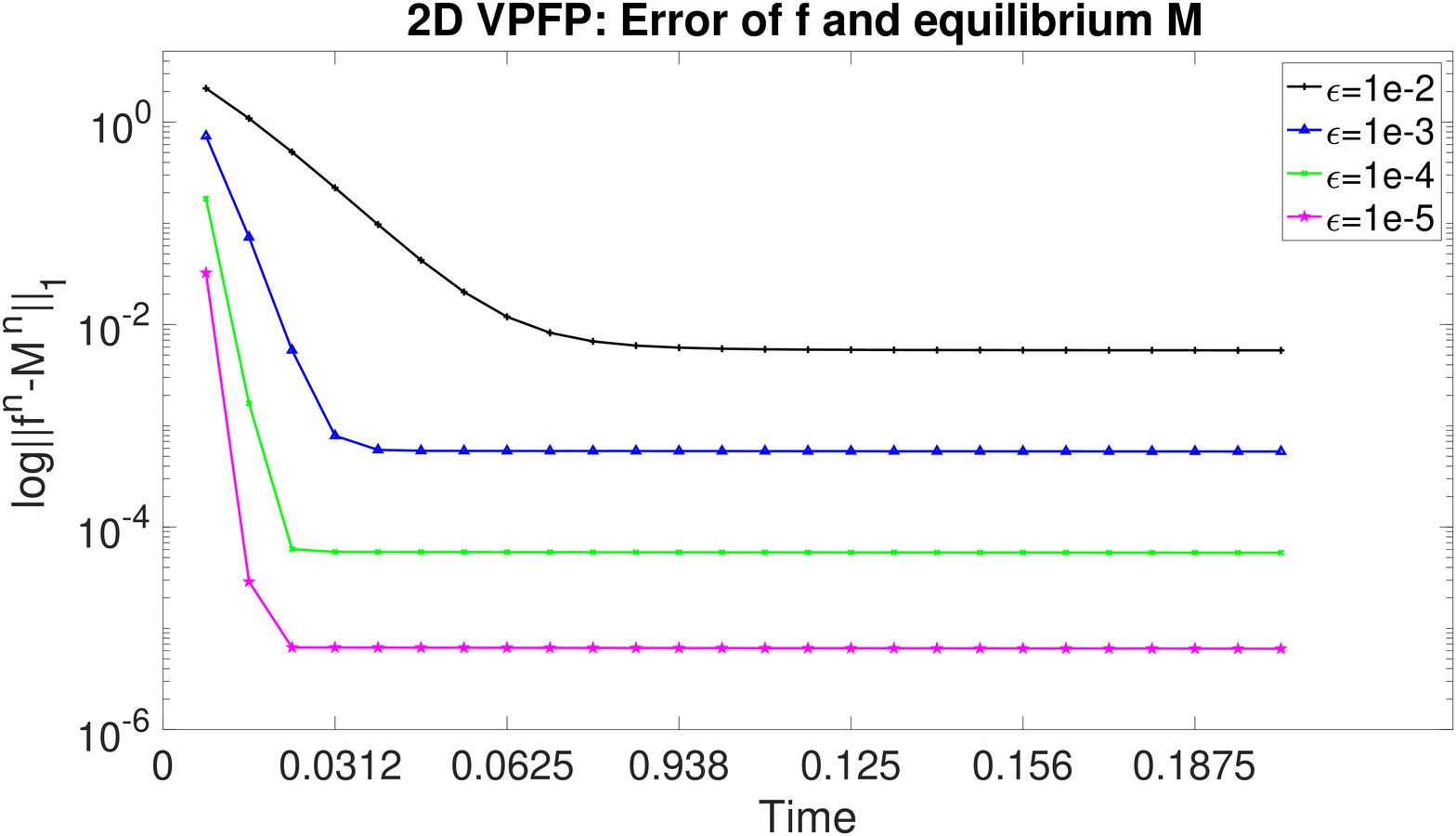}
    \caption{Evolution of the distance between our solution $f$ and local equilibrium $M$ with decreasing $\eps$. Here $\Delta x =1/16 $, $\Delta v=10/40$, $N_{max} = 1000$, and $\tau = 0.0078 $. }
    \label{fig:AP2D}
\end{figure}

\subsection{3D in velocity}
At last, we'd like to emphasize that our scheme can be easily extended to higher dimensions due to its passive parallelizability. To this end, we consider one example in three dimensions. The initial data is taken to be 
\[
f_0(v_1,v_2,v_3)=(2\pi)^{-3/2}(e^{-(v_1-1)^2-(v_2+1)^2-v_3^2/2}+e^{-(v_1+1)^2-(v_2-1)^2-v_3^2/2})\,,
\]
as displayed in Fig.~\ref{fig:3D1}. The computational domain is $v \in [-L,L] \times [-L,L] \times [-L,L] $ with $L=4$ and it is partitioned into 16 cells in each direction, i.e., $\Delta v = 0.5$. We take $\eps = 0.2$, $\Delta t = 0.05$, and $N_{max} = 1000$. Fig.~\ref{fig:3D2} gives the evolution of the initial profile towards the equilibrium and also the decay of entropy. 

\begin{figure}[!ht]
  \includegraphics[width=0.45\textwidth]{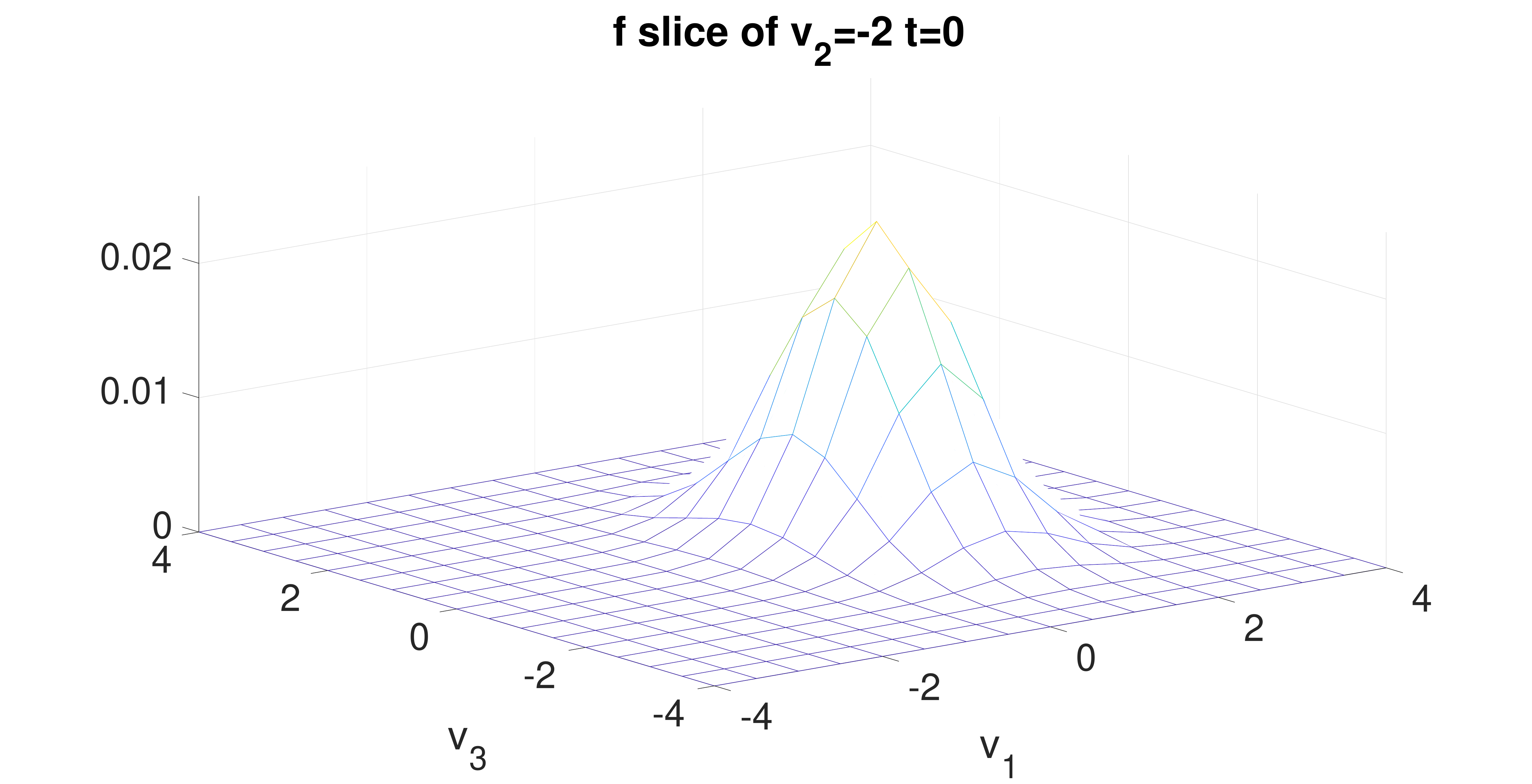}
  \includegraphics[width=0.45\textwidth]{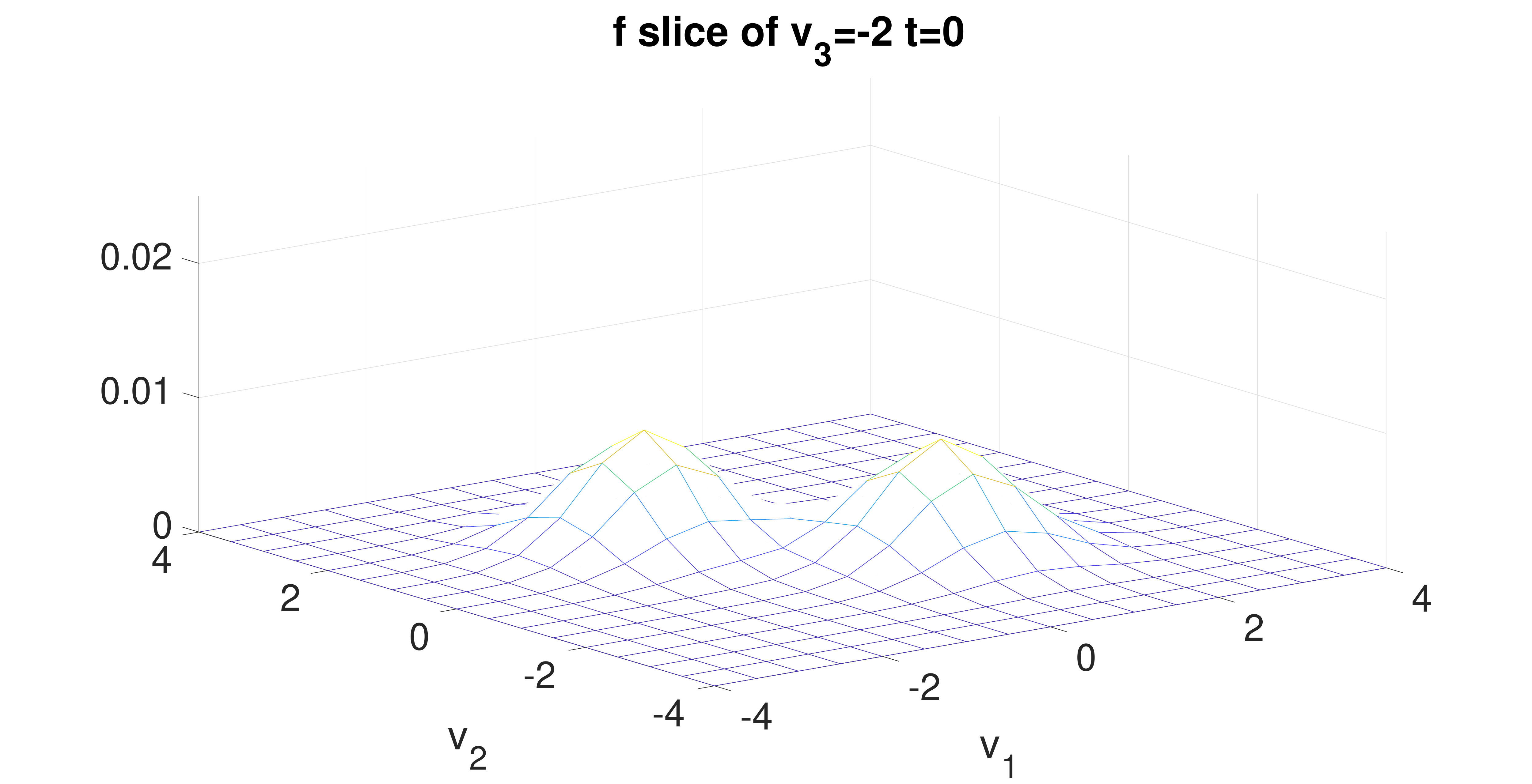}
  \caption{Initial  state of $f(0,v_1,-2,v_3)$ and $f(0,v_1,v_2,-2)$.}
  \label{fig:3D1}
\end{figure}

\begin{figure}[!ht]
{\includegraphics[width=0.45\textwidth]{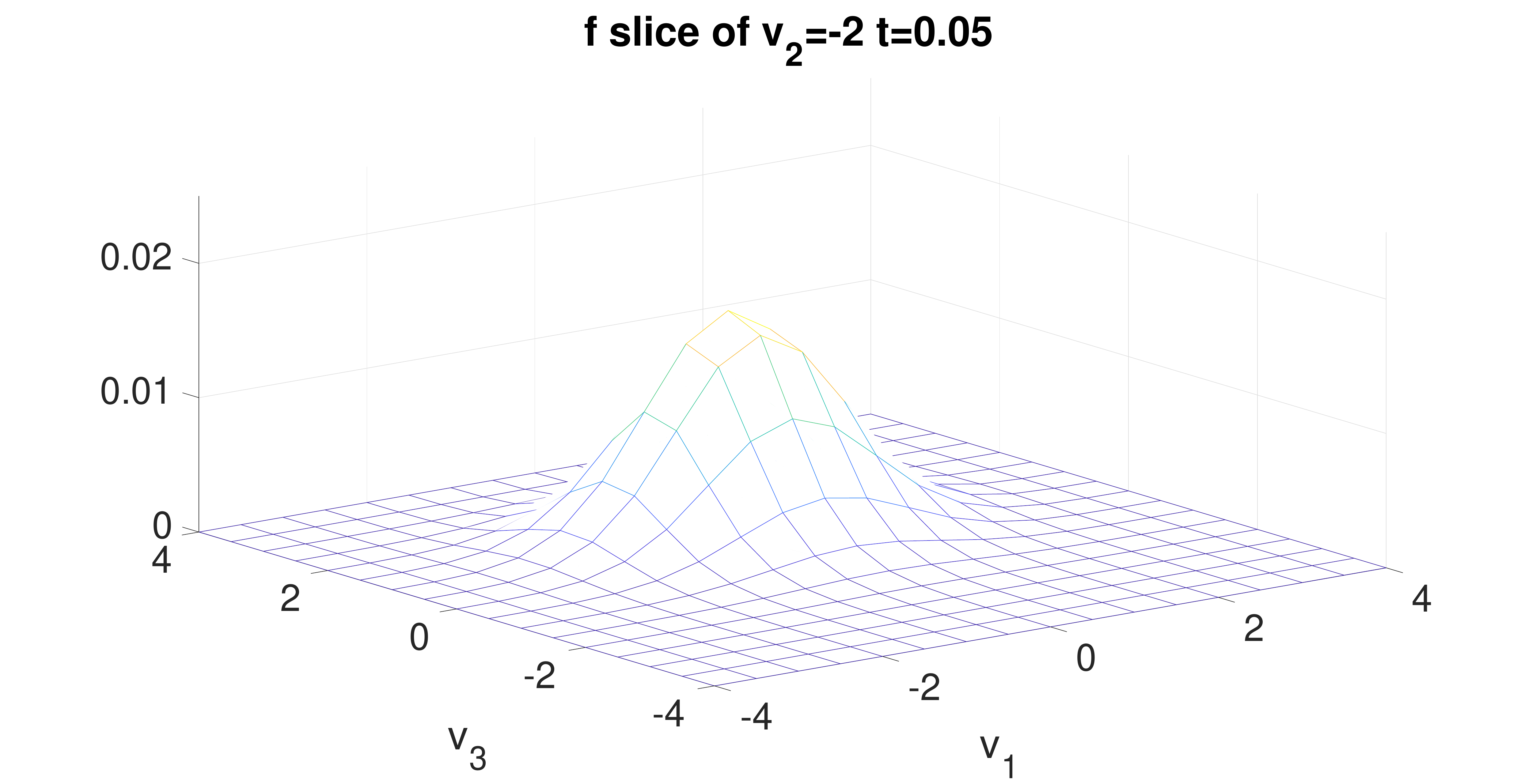}}
{\includegraphics[width=0.45\textwidth]{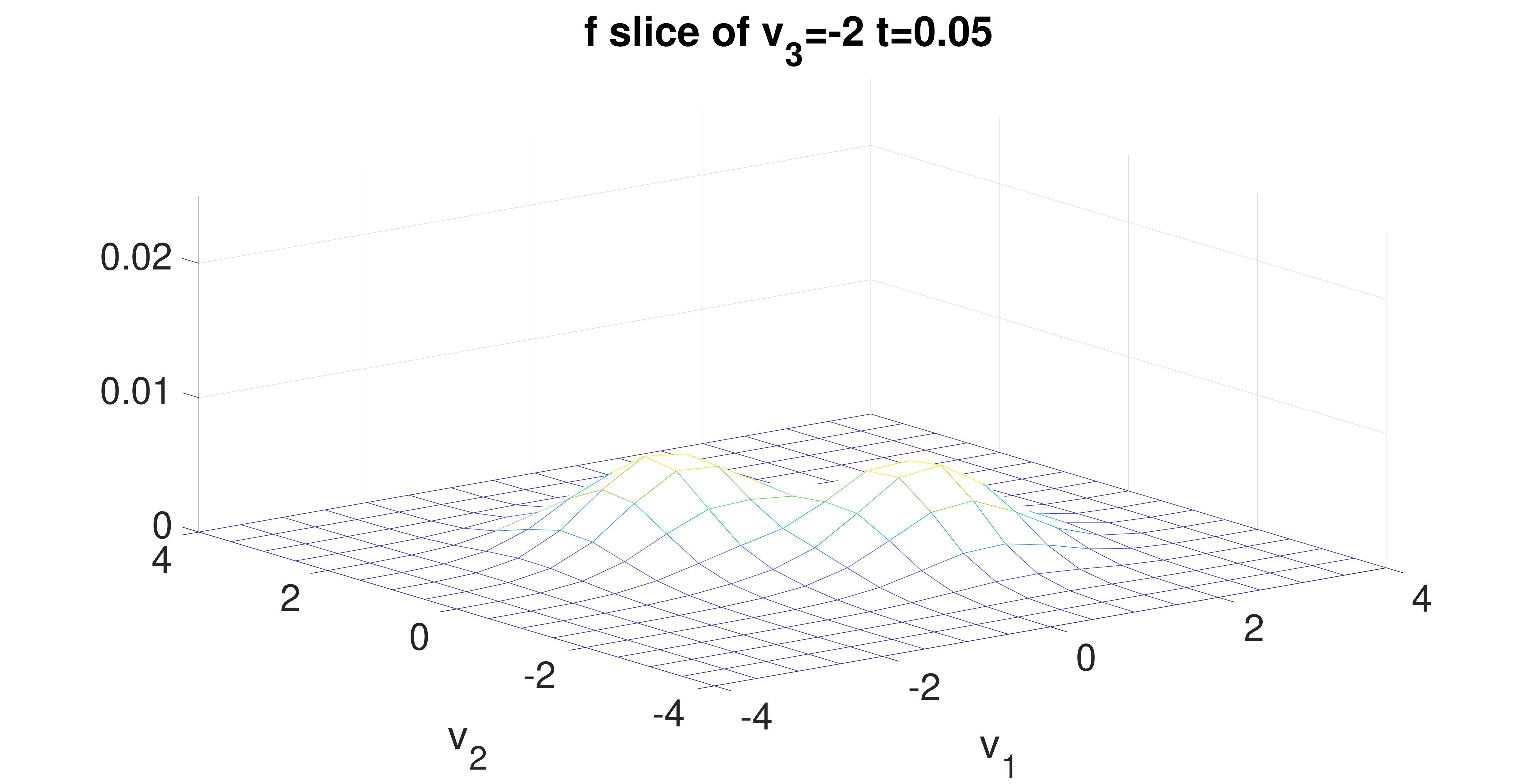}}
  \vfill
{\includegraphics[width=0.45\textwidth]{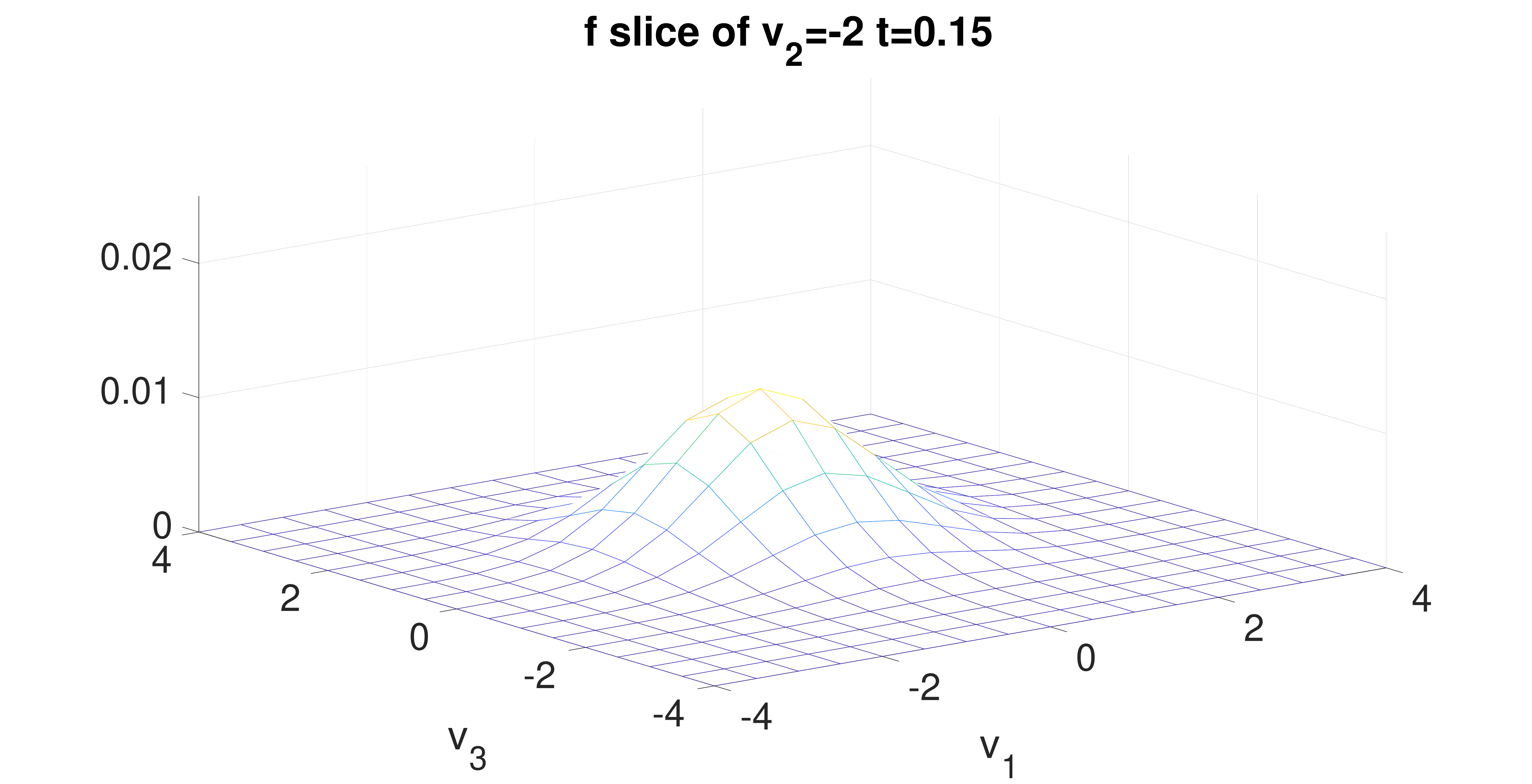}}
{\includegraphics[width=0.45\textwidth]{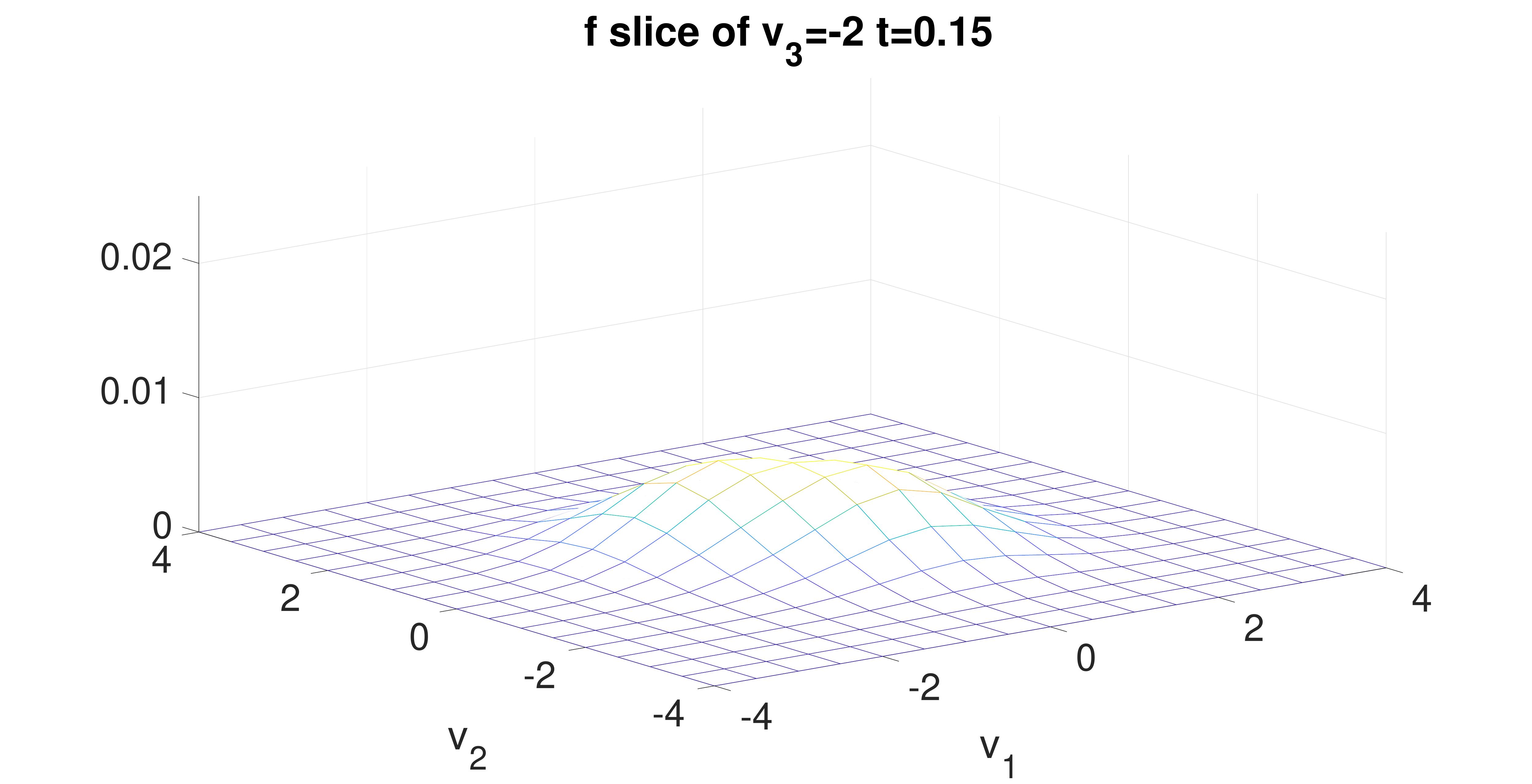}}
  \vfill
{\includegraphics[width=0.45\textwidth]{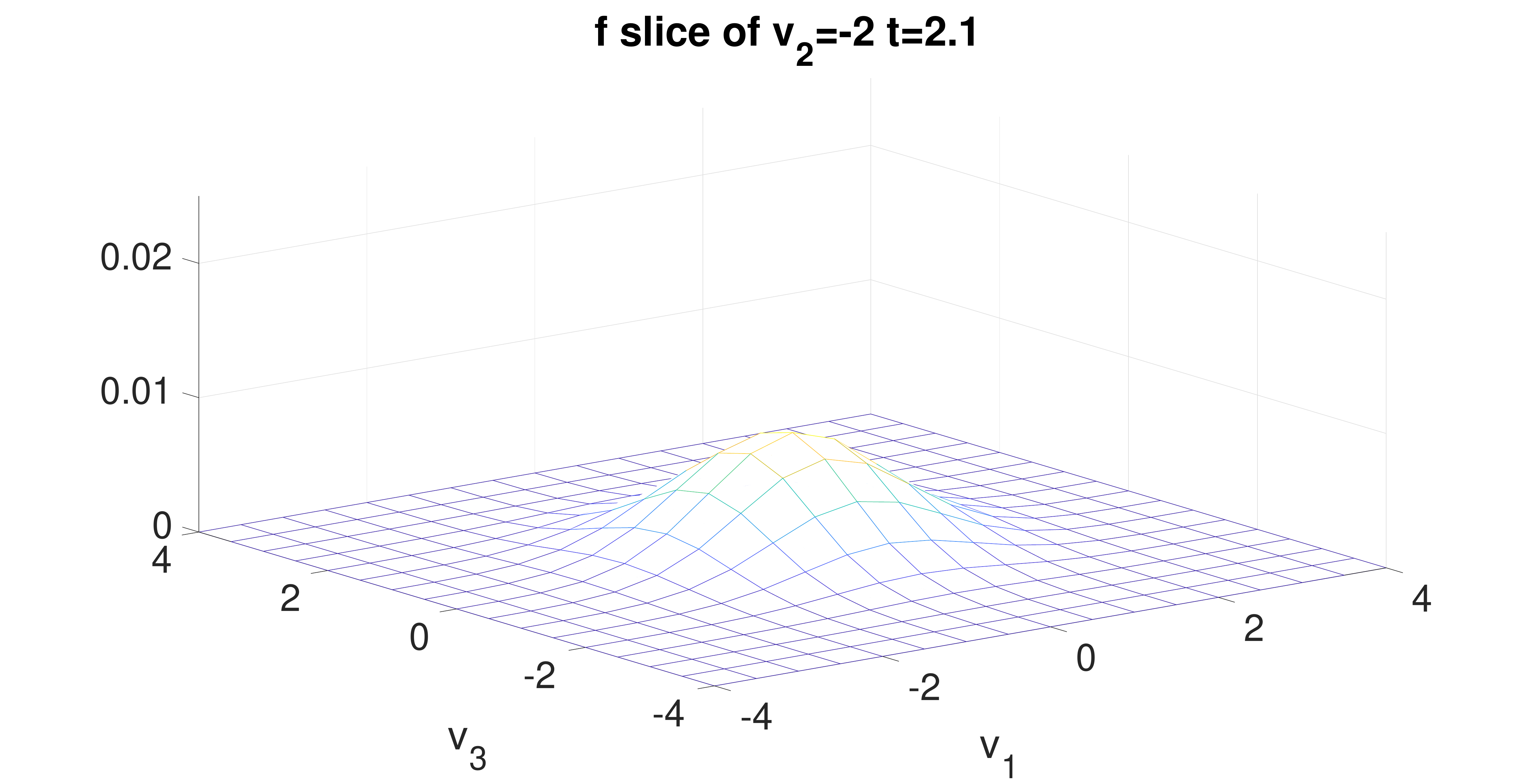}}
{\includegraphics[width=0.45\textwidth]{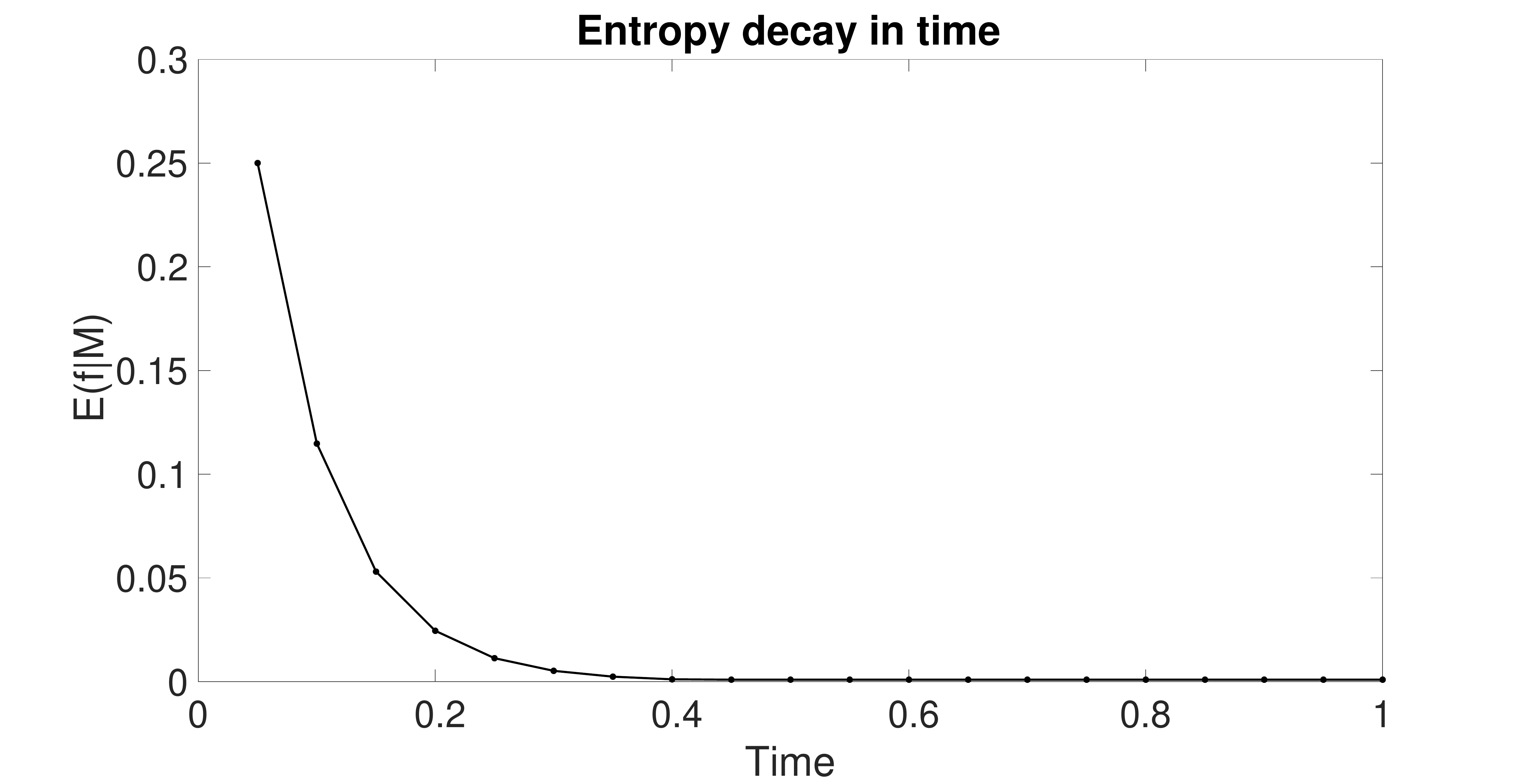}}
 \caption{Evolution of $f(t,v_1,-2,v_3)$ and $f(t,v_1,v_2,-2)$. Top two are $t=0.05$, middle two are $t=0.15$, bottom left is $f(2.1,v_1,-2,v_3)$ and bottom right is entropy decay in time.}
 \label{fig:3D2}
\end{figure}

\section{Conclusion and discussion}
In this paper, we propose an asymptotic preserving (AP) scheme for the VPFP system with high field scaling. The scheme falls into the category of implicit-explicit methods, which is often adopted in designing AP schemes. The major contribution, however, is the treatment of the implicit part, for which we use a variational formulation. Therefore, instead of directly inverting the implicit system, we solve an minimization problem. The minimizer then automatically conserves mass and preserves positivity, both of which are desirable features of numerical schemes. More importantly, the implicit system is stiff and often suffers from ill-conditioning, a problem that has been overlooked in the literature. The optimization algorithm we developed, on the contrary, includes a pre-conditioner that comes from the Hessian of the objective function, and therefore enjoys uniform convergence across different scales. Numerical examples also show that this convergence is insensitive to the dimension of the problem, an important property that is desired for high dimensional problems. Furthermore, the massive parallelizability of our scheme also makes it amenable in high dimensions. Although the implicit part of the VPFP system may be solved efficiently with more sophisticated designed algorithm such as multigrid method, our method has a much better generalizability. In fact, the variational formulation offers a natural implicit treatment that also mimics the real physical process (i.e., entropy decrease) for the collision term in many kinetic equations. And we hope that, the variational framework we put forward in this paper, together with the advanced optimization solver, can provide a new class of AP schemes for kinetic equations applicable to high dimensions efficiently.

\medskip 
\noindent{\bf Acknowledgement:} The authors would like to thank the anonymous reviewers for their comments and suggestions. JAC was supported by EPSRC grant number EP/P031587/1 and the Advanced Grant Nonlocal-CPD (Nonlocal PDEs for Complex Particle Dynamics: Phase Transitions, Patterns and Synchronization) of the European Research Council Executive Agency (ERC) under the European Union's Horizon 2020 research and innovation programme (grant agreement No. 883363). LW and WX was partially supported by NSF grant DMS-1903420 and DMS-1846854. MY was partially supported by NSF grant DMS-2012439.
\appendix
\section{Proof of Theorem~\ref{Thm:PNls2}}\label{apdx:1}

\begin{lemma}\label{apdx_lemma1}
Suppose $\HH_{1}$ and $\HH_{2}$ are positive definite matrices with bounded eigenvalues: $m_1 I \preceq \HH_{1} \preceq M_1 I$ and $m_{2} I \preceq \HH_{2} \preceq M_{2} I$. Let $\Delta u_{1}$ and $\Delta u_{2}$ be the search directions generated using $\HH_{1}$ and $\HH_{2}$ respectively:
\[
\begin{array}{l}
\Delta u_{1}=\operatorname{prox}_{\chi}^{\HH_{1}}\left(u-\HH_{1}^{-1} \nabla F(u)\right)-u\,, \\
\Delta u_{2}=\operatorname{prox}_{\chi}^{\HH_{2}}\left(u-\HH_{2}^{-1} \nabla F(u)\right)-u\,.
\end{array}
\]
Then these two search directions satisfy
\[
\| \Delta u_{1} - \Delta u_{2} \|_{\HH_1} \leq 
\| I-\HH_1^{-1/2} \HH_2 \HH_1^{-1/2} \| \|\Delta u_{2}\|_{\HH_1}\,.
\]
\end{lemma}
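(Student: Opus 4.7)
The plan is to work directly from the variational characterizations of $\Delta u_1$ and $\Delta u_2$ via the first-order optimality conditions for the (scaled) proximal operator, and then combine the resulting inclusions with monotonicity of $\partial \chi$.

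First, I would note that $z_i := u + \Delta u_i = \prox_\chi^{\HH_i}(u - \HH_i^{-1}\nabla F(u))$ is the minimizer of $\chi(z) + \tfrac{1}{2}\|z - u + \HH_i^{-1}\nabla F(u)\|_{\HH_i}^2$, so its optimality condition reads
\begin{equation*}
0 \in \partial \chi(u + \Delta u_i) + \HH_i \Delta u_i + \nabla F(u), \qquad i = 1, 2.
\end{equation*}
Hence there exist subgradients $\xi_i \in \partial \chi(u + \Delta u_i)$ with $\xi_i = -\HH_i \Delta u_i - \nabla F(u)$. Since $\chi$ is convex (as the indicator of an affine set), $\partial \chi$ is a monotone operator, giving
\begin{equation*}
\langle \xi_1 - \xi_2,\; \Delta u_1 - \Delta u_2 \rangle \ge 0,
\end{equation*}
which, after canceling the common $\nabla F(u)$, is equivalent to $\langle \HH_1 \Delta u_1 - \HH_2 \Delta u_2,\; \Delta u_2 - \Delta u_1 \rangle \ge 0$.

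Next I would insert and subtract $\HH_1 \Delta u_2$ to isolate an $\HH_1$-norm square:
\begin{equation*}
\|\Delta u_1 - \Delta u_2\|_{\HH_1}^2 \;\le\; \langle (\HH_1 - \HH_2)\Delta u_2,\; \Delta u_2 - \Delta u_1 \rangle.
\end{equation*}
The key algebraic identity I would then exploit is
\begin{equation*}
\HH_1 - \HH_2 \;=\; \HH_1^{1/2}\bigl(I - \HH_1^{-1/2} \HH_2 \HH_1^{-1/2}\bigr)\HH_1^{1/2},
\end{equation*}
so that, writing $\mathsf Q := I - \HH_1^{-1/2}\HH_2 \HH_1^{-1/2}$, the right-hand side becomes $\langle \mathsf Q\, \HH_1^{1/2}\Delta u_2,\; \HH_1^{1/2}(\Delta u_2 - \Delta u_1)\rangle$. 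Applying Cauchy--Schwarz and the operator-norm bound $\|\mathsf Q x\| \le \|\mathsf Q\|\,\|x\|$ yields
\begin{equation*}
\|\Delta u_1 - \Delta u_2\|_{\HH_1}^2 \;\le\; \|\mathsf Q\|\, \|\Delta u_2\|_{\HH_1}\, \|\Delta u_1 - \Delta u_2\|_{\HH_1}.
\end{equation*}
Dividing through by $\|\Delta u_1 - \Delta u_2\|_{\HH_1}$ (handling the trivial case where it vanishes separately) gives the desired inequality.

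I do not expect any real obstacle here: the bounds on eigenvalues of $\HH_1, \HH_2$ only enter implicitly to ensure that $\HH_1^{1/2}$, $\HH_1^{-1/2}$ are well-defined and self-adjoint so the identity for $\HH_1 - \HH_2$ is valid. The closest thing to a delicate step is the symmetrization identity for $\HH_1 - \HH_2$; one simply needs to verify it by direct expansion. Note that the hypotheses $m_i I \preceq \HH_i \preceq M_i I$ are not used quantitatively in the bound itself, only to make the positive square roots and inverses meaningful.
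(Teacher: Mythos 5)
Your proof is correct and reaches exactly the same key intermediate inequality as the paper, namely
\[
\|\Delta u_1 - \Delta u_2\|_{\HH_1}^2 \;\le\; \langle (\HH_1 - \HH_2)\Delta u_2,\; \Delta u_2 - \Delta u_1\rangle ,
\]
followed by the identical symmetrization $\HH_1-\HH_2=\HH_1^{1/2}\bigl(I-\HH_1^{-1/2}\HH_2\HH_1^{-1/2}\bigr)\HH_1^{1/2}$ and Cauchy--Schwarz. The only real difference is how you get there: the paper (following Lee, Sun, and Saunders' Proposition 3.6) characterizes $\Delta u_1,\Delta u_2$ as minimizers of two strongly convex composite functions, invokes the three-point (strong-convexity) inequality twice, and adds the two resulting inequalities; you instead write down the two subgradient inclusions $-\HH_i\Delta u_i-\nabla F(u)\in\partial\chi(u+\Delta u_i)$ and apply monotonicity of $\partial\chi$ directly. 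The two routes are algebraically equivalent --- summing the two three-point inequalities is precisely one standard proof of monotonicity in this setting --- so your version is a slightly more streamlined phrasing of the same argument rather than a different method. Your closing observation that the eigenvalue bounds $m_iI\preceq\HH_i\preceq M_iI$ are not used quantitatively is accurate; in the paper they are stated because the constants $m_i,M_i$ feed into Lemma~\ref{apdx_lemma2} and Theorem~\ref{Thm:PNls2}, not because the present lemma needs them beyond positive definiteness.
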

\begin{proof}
The main part of the proof is similar to that in \cite[proof of Proposition 3.6]{lee2014proximal} with a little alteration except the very last estimate. But we still include the details for completeness. 
    By the definition of search direction
    \[
    \Delta u=\operatorname{prox}_{\chi}^{\HH}\left(u-\HH^{-1} \nabla F(u)\right)-u .
    \]
    we have
    \[
    \HH(\HH^{-1}\nabla F(u) - \Delta u) \in \partial \chi (u + \Delta u)\,,
    \]
    thus
    \[
    \HH \Delta u  \in - \nabla F(u) - \partial \chi (u + \Delta u).
    \]
    Then
    \[
    \begin{aligned}
\Delta u_{1} &=\argmin _{d} (\nabla F(u))^{\intercal} d+(1/2) d^{\intercal} \HH_{1} d+\chi(u+d), \\
\Delta u_{2} &=\argmin _{d} (\nabla F(u))^{\intercal} d+(1/2) d^{\intercal} \HH_{2} d+\chi(u+d).
\end{aligned}
    \]
    which leads to
    \begin{align*}
& (\nabla F(u))^{\intercal} \Delta u_{1}+(1/2)\Delta u_{1}^{\intercal} \HH_{1} \Delta u_{1}+\chi  \left(u+\Delta u_{1}\right) \\ 
\leq & (\nabla F(u))^{\intercal} \Delta u_{2}+(1/2)\Delta u_{2}^{\intercal} \HH_{1} \Delta u_{2}+\chi \left(u+\Delta u_{2}\right)\\
& -(1/2)(\Delta u_1-\Delta u_{2})^{\intercal} \HH_{1} (\Delta u_1-\Delta u_{2}).
\end{align*}
which is equivalent to 
\begin{align*}
& (\nabla F(u))^{\intercal} \Delta u_{1}+\Delta u_{1}^{\intercal} \HH_{1} \Delta u_{1}+\chi  \left(u+\Delta u_{1}\right) \\ 
\leq & (\nabla F(u))^{\intercal} \Delta u_{2}+\Delta u_{1}^{\intercal} \HH_{1} \Delta u_{2}+\chi \left(u+\Delta u_{2}\right).
\end{align*}
Similarly, we have 
     \begin{align*}
(\nabla F(u))^{\intercal} \Delta u_{2}+\Delta u_{2}^{\intercal} \HH_{2} \Delta u_{2}+\chi  \left(u+\Delta u_{2}\right) \\
\quad \leq (\nabla F(u))^{T} \Delta u_{1}+\Delta u_{1}^{\intercal} \HH_{2} \Delta u_{2}+\chi \left(u+\Delta u_{1}\right).
\end{align*}
    Summing up these two inequalities to get
    \[
    (\Delta u_{1})^{\intercal} \HH_{1} \Delta u_{1}-(\Delta u_{1})^{\intercal}\left(\HH_{1}+\HH_{2}\right) \Delta u_{2}+(\Delta u_{2})^{\intercal} \HH_{2} \Delta u_{2} \leq 0.
    \]
    By completing square we have
    \begin{align*}
&(\Delta u_{1})^{\intercal} \HH_{1} \Delta u_{1}-2 (\Delta u_{1})^{\intercal} \HH_{1} \Delta u_{2}+(\Delta u_{2})^{\intercal} \HH_{1} \Delta u_{2} \\
\leq & (\Delta u_{1})^{\intercal}\left(\HH_{2}-\HH_{1}\right) \Delta u_{2}+(\Delta u_{2})^{\intercal}\left(\HH_{1}-\HH_{2}\right) \Delta u_{2}.
\end{align*}
    Consequently, 
    \[
    \begin{aligned}
    \| \Delta u_{1} - \Delta u_{2} \|^2_{\HH_1}
    & \leq (\Delta u_{1}-\Delta u_{2})^\intercal (\HH_2-\HH_1)\Delta u_{2} \\
    & = (\HH_1^{1/2} (\Delta u_{1}-\Delta u_{2}))^\intercal (\HH_1^{-1/2} \HH_2 \HH_1^{-1/2} - I) \HH_1^{1/2} \Delta u_{2} \\
    & \leq  \| \Delta u_{1} - \Delta u_{2} \|_{\HH_1} \| \HH_1^{-1/2} \HH_2 \HH_1^{-1/2} - I \| \|\Delta u_{2}\|_{\HH_1}.
    \end{aligned}
    \]
    which leads to the result.
\end{proof}
\begin{lemma}\label{apdx_lemma2}
Suppose that there exist constants $r,~r',~R',~L_2 >0$ such that $r' I \prec \HH^k \prec R' I$, $r I \prec \nabla^2F(u^{(k)}) $, and $\nabla^2F$ is Lipschitz continuous with constant $L_2$. Let $u_{nt}^{(k+1)} : = u^{(k)} + \Delta u_{nt}^{(k)}$, where $\Delta u_{nt}^{(k)}$ is the search direction by the proximal Newton method. 
Then
\[
\| u_{nt}^{(k+1)} - u^* \|_{\HH^k} \leq \frac{R \sqrt{R'}}{2r\sqrt{r'}} \| u^{(k)} - u^* \|^2_{\HH^k}.
\]
\end{lemma}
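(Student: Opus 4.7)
The target is the classical local quadratic convergence of the exact proximal Newton step, measured in the $\HH^k$-norm rather than the natural $\nabla^2 F(u^{(k)})$-norm. The plan is to first establish a quadratic bound in the $\nabla^2 F(u^{(k)})$-metric, and then use the two-sided spectral bounds $r' I \prec \HH^k \prec R' I$ together with $r I \prec \nabla^2 F(u^{(k)})$ to translate this into an $\HH^k$-norm estimate, with the factor $L_2$ entering through the Lipschitz continuity of $\nabla^2 F$.

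The first step is a fixed-point characterization of the minimizer: since $u^*$ satisfies the optimality condition $0 \in \partial \chi(u^*) + \nabla F(u^*)$, for any positive definite metric $M$ one has
\[
u^* = \prox_{\chi}^{M}\!\bigl(u^* - M^{-1}\nabla F(u^*)\bigr).
\]
Applying this with $M = \nabla^2 F(u^{(k)})$ and invoking the nonexpansiveness of $\prox_{\chi}^{M}$ in the $M$-norm (the same property used in Lemma~\ref{apdx_lemma1}), the proximal Newton iterate
\[
u_{nt}^{(k+1)} = \prox_{\chi}^{\nabla^2 F(u^{(k)})}\!\bigl(u^{(k)} - (\nabla^2 F(u^{(k)}))^{-1}\nabla F(u^{(k)})\bigr)
\]
satisfies
\[
\|u_{nt}^{(k+1)} - u^*\|_{\nabla^2 F(u^{(k)})} \leq \bigl\|(u^{(k)} - u^*) - (\nabla^2 F(u^{(k)}))^{-1}(\nabla F(u^{(k)}) - \nabla F(u^*))\bigr\|_{\nabla^2 F(u^{(k)})}.
\]
Writing $\nabla F(u^{(k)}) - \nabla F(u^*) = G^k (u^{(k)} - u^*)$ with $G^k = \int_0^1 \nabla^2 F(u^* + s(u^{(k)} - u^*))\, ds$, the right-hand side equals $\|(\nabla^2 F(u^{(k)}))^{-1}(\nabla^2 F(u^{(k)}) - G^k)(u^{(k)} - u^*)\|_{\nabla^2 F(u^{(k)})}$. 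Factoring out $(\nabla^2 F(u^{(k)}))^{-1/2}$ and using $r I \prec \nabla^2 F(u^{(k)})$, this is bounded by $r^{-1/2}\|(\nabla^2 F(u^{(k)}) - G^k)(u^{(k)} - u^*)\|$, and the Lipschitz bound $\|\nabla^2 F(u^{(k)}) - G^k\| \leq \int_0^1 L_2 (1-s)\|u^{(k)} - u^*\|\,ds = \tfrac{L_2}{2}\|u^{(k)} - u^*\|$ yields a quadratic decay in the Euclidean norm.

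The final step is the change of norm. The upper bound $\HH^k \prec R' I$ combined with $\nabla^2 F(u^{(k)}) \succ r I$ gives $\|\cdot\|_{\HH^k} \leq \sqrt{R'/r}\,\|\cdot\|_{\nabla^2 F(u^{(k)})}$ on the left, while the lower bound $\HH^k \succ r' I$ gives $\|\cdot\| \lesssim {r'}^{-1/2}\|\cdot\|_{\HH^k}$ on the right. Multiplying these conversions against the $\tfrac{L_2}{2\sqrt{r}}$ contraction constant collects into the stated constant (up to the accounting of the $\sqrt{r'}$ versus $r'$ factor, which should be reconciled by keeping one of the conversion steps in its $A$-norm form). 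The main obstacle I anticipate is precisely this bookkeeping across four different norms---Euclidean, $\HH^k$, $\nabla^2 F(u^{(k)})$, and the transformed metric arising after extracting $(\nabla^2 F(u^{(k)}))^{-1/2}$---so that the quadratic dependence survives the conversion intact and the final constant matches the stated one; the contraction mechanism itself is a direct adaptation of the textbook Newton argument.
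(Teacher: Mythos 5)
Your argument is correct and follows the same overall route as the paper's: pass from the $\HH^k$-norm to the Euclidean norm via $\HH^k \prec R'I$, then to the $\nabla^2F(u^{(k)})$-norm via $\nabla^2F(u^{(k)}) \succ rI$, invoke the quadratic contraction of the exact proximal Newton step in that natural metric, and finally convert $\|u^{(k)}-u^*\|$ back to $\|u^{(k)}-u^*\|_{\HH^k}$ via $\HH^k \succ r'I$. The one substantive difference is that you re-derive the middle estimate $\|u_{nt}^{(k+1)}-u^*\|_{\nabla^2F(u^{(k)})}\leq \tfrac{L_2}{2\sqrt{r}}\|u^{(k)}-u^*\|^2$ from the fixed-point characterization of $u^*$, the $M$-norm nonexpansiveness of $\prox_\chi^M$, and the integral-mean-value identity $\nabla F(u^{(k)})-\nabla F(u^*)=G^k(u^{(k)}-u^*)$; the paper simply cites this as Theorem 3.4 of Lee, Sun and Saunders. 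Your caution about the $\sqrt{r'}$ bookkeeping is warranted: the final conversion gives $\|u^{(k)}-u^*\|^2 \leq \tfrac{1}{r'}\|u^{(k)}-u^*\|_{\HH^k}^2$, so the constant that actually comes out is $\tfrac{L_2\sqrt{R'}}{2rr'}$, whereas the paper writes $\tfrac{L_2\sqrt{R'}}{2r\sqrt{r'}}$ in its last line (and the lemma statement has a further typo, $R$ in place of $L_2$). Your computation is the correct one; the two agree only when $r'\geq 1$.
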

\begin{proof}
    \begin{align*}
        \| u_{nt}^{(k+1)} - u^* \|_{\HH^k} & \leq \sqrt{R'} \| u_{nt}^{(k+1)} - u^* \| \\
        & \leq \frac{\sqrt{R'}}{\sqrt{r}} \| u_{nt}^{(k+1)} - u^* \|_{\nabla^2 F(u^{(k)})} \\
        & \leq \frac{\sqrt{R'}}{\sqrt{r}} \frac{L_2}{2\sqrt{r}} \| u^{(k)} - u^* \|^2 \\
        & \leq \frac{L_2 \sqrt{R'}}{2r\sqrt{r'}} \| u^{(k)} - u^* \|^2_{\HH^k}.
    \end{align*}
    The third inequality comes from the  quadratic convergence of proximal Newton method (Theorem 3.4 in \cite{lee2014proximal}).
\end{proof}

{\bf Proof of Theorem~\ref{Thm:PNls2}:}
\begin{proof} 
Let $\Delta u^{(k)}_{nt}$ be the search direction generated by the proximal newton method and $\Delta u^{(k)}$ generated by Algorithm~\ref{alg:PNS}. Then we have
    \begin{align*}
        \| u^{(k+1)} - u^* \|_{\HH^k} & = \| u^{(k)} + t^l \Delta u^{(k)} - u^* \|_{\HH^k} \\
        & = \| u^{(k)} +  \Delta u^{(k)}_{nt} - u^* -\Delta u^{(k)}_{nt} + t^l \Delta u^{(k)} \|_{\HH^k} \\
        & = \| u^{(k)} +  \Delta u^{(k)}_{nt} - u^* -t^l \Delta u^{(k)}_{nt} + t^l \Delta u^{(k)} -(1-t^l) \Delta u^{(k)}_{nt}\|_{\HH^k} \\
        & \leq \| u^{(k)} +  \Delta u^{(k)}_{nt} - u^* \|_{\HH^k} + t^l \| \Delta u^{(k)} - \Delta u^{(k)}_{nt} \|_{\HH^k}+ (1-t^l) \|\Delta u^{(k)}_{nt} \|_{\HH^k} \\
        & \leq C \| u^{(k)} - u^* \|^2_{\HH^k} + t^l q \|\Delta u^{(k)}_{nt} \|_{\HH^k}+ (1-t^l) \|\Delta u^{(k)}_{nt} \|_{\HH^k} \\
        & = C \| u^{(k)} - u^* \|^2_{\HH^k} + (1-t^l+qt^l)\|\Delta u^{(k)}_{nt} \|_{\HH^k} \\
        & = C \| u^{(k)} - u^* \|^2_{\HH^k} + (1-t^l+qt^l)\|u^{(k)}+\Delta u^{(k)}_{nt} -u^{(k)} + u^*-u^* \|_{\HH^k} \\
        & \leq C \| u^{(k)} - u^* \|^2_{\HH^k} + (1-t^l+qt^l)(\| u^{(k+1)}_{nt} - u^* \|_{\HH^k} + \| u^{(k)} - u^* \|_{\HH^k}) \\
        & \leq C' \| u^{(k)} - u^* \|^2_{\HH^k} + (1-t^l+qt^l)\| u^{(k)} - u^* \|_{\HH^k}.
    \end{align*}
    Here $C = \frac{L_2 \sqrt{R'}}{2r \sqrt{r'}}$, $C' = \frac{L_2\sqrt{R'}}{2r \sqrt{r'}}(2-t^l+qt^l)$,
    and the second and third inequalities use Lemmas~\ref{apdx_lemma1} and~\ref{apdx_lemma2}.
\end{proof}

\bibliographystyle{abbrv}
\bibliography{VPFP.bib}

\begin{thebibliography}{10}

\bibitem{ACGS99}
A.~Arnold, J.~A. Carrillo, I.~Gamba, and C.-W. Shu.
\newblock Low and high field scaling limits for the {V}lasov- and
  {W}igner-{P}oisson-{F}okker-{P}lanck systems.
\newblock volume~30, pages 121--153. 2001.
\newblock The Sixteenth International Conference on Transport Theory, Part I
  (Atlanta, GA, 1999).

\bibitem{benamou2000computational}
J.-D. Benamou and Y.~Brenier.
\newblock A computational fluid mechanics solution to the monge-kantorovich
  mass transfer problem.
\newblock {\em Numerische Mathematik}, 84(3):375--393, 2000.

\bibitem{Bouchut91}
F.~Bouchut.
\newblock Global weak solution of the {V}lasov--{P}oisson system for small
  electrons mass.
\newblock {\em Communications in Partial Differential Equations},
  16(8-9):1337--1365, 1991.

\bibitem{Bouchut95}
F.~Bouchut.
\newblock Smoothing effect for the non-linear
  {V}lasov--{P}oisson--{F}okker--{P}lanck system.
\newblock {\em Journal of Differential Equations}, 122(2):225--238, 1995.

\bibitem{bouchut1995long}
F.~Bouchut and J.~Dolbeault.
\newblock On long time asymptotics of the {V}lasov-{F}okker-{P}lanck equation
  and of the {V}lasov-{P}oisson-{F}okker-{P}lanck system with {C}oulombic and
  {N}ewtonian potentials.
\newblock {\em Differential and Integral Equations}, 8(3):487--514, 1995.

\bibitem{Braides}
A.~Braides.
\newblock Chapter 2 a handbook of ${\Gamma}$-convergence.
\newblock volume~3 of {\em Handbook of Differential Equations: Stationary
  Partial Differential Equations}, pages 101 -- 213. North-Holland, 2006.

\bibitem{buet2010chang}
C.~Buet, S.~Dellacherie, et~al.
\newblock On the chang and cooper scheme applied to a linear {F}okker-{P}lanck
  equation.
\newblock {\em Communications in Mathematical Sciences}, 8(4):1079--1090, 2010.

\bibitem{capizzano2010note}
S.~S. Capizzano and C.~Tablino-Possio.
\newblock A note on algebraic multigrid methods for the discrete weighted
  {L}aplacian.
\newblock {\em Computers \& Mathematics with Applications}, 60(5):1290--1298,
  2010.

\bibitem{carrillo2019primal}
J.~A. Carrillo, K.~Craig, L.~Wang, and C.~Wei.
\newblock Primal dual methods for {W}asserstein gradient flows.
\newblock {\em arXiv preprint arXiv:1901.08081}, 2019.

\bibitem{CS95}
J.~A. Carrillo and J.~Soler.
\newblock On the initial value problem for the
  {V}lasov--{P}oisson--{F}okker--{P}lanck system with initial data in lp
  spaces.
\newblock {\em Mathematical Methods in the Applied Sciences}, 18(10):825--839,
  1995.

\bibitem{CS97}
J.~A. Carrillo and J.~Soler.
\newblock On the {V}lasov--{P}oisson--{F}okker--{P}lanck equations with
  measures in morrey spaces as initial data.
\newblock {\em Journal of Mathematical Analysis and Applications},
  207(2):475--495, 1997.

\bibitem{CSV96}
J.~A. Carrillo, J.~Soler, and J.~L. Vazquez.
\newblock Asymptotic behaviour and self-similarity for the three dimensional
  {V}lasov--{P}oisson--{F}okker--{P}lanck system.
\newblock {\em Journal of Functional Analysis}, 141(1):99--132, 1996.

\bibitem{CGJS00}
C.~Cercignani, I.~M. Gamba, J.~W. Jerome, and C.-W. Shu.
\newblock Device benchmark comparisons via kinetic, hydrodynamic, and
  high-hield models.
\newblock {\em Computer Methods in Applied Mechanics and Engineering},
  181(4):381--392, 2000.

\bibitem{CR14}
Y.~Cheng and J.~A. Rossmanith.
\newblock A class of quadrature-based moment-closure methods with application
  to the {V}lasov--{P}oisson--{F}okker--{P}lanck system in the high-field
  limit.
\newblock {\em Journal of Computational and Applied Mathematics}, 262:384--398,
  2014.

\bibitem{DA17}
M.~Dehghan and M.~Abbaszadeh.
\newblock A local meshless method for solving multi-dimensional
  {V}lasov--{P}oisson and {V}lasov--{P}oisson--{F}okker--{P}lanck systems
  arising in plasma physics.
\newblock {\em Engineering with Computers}, 33(4):961--981, 2017.

\bibitem{DV01}
L.~Desvillettes and C.~Villani.
\newblock On the trend to global equilibrium in spatially inhomogeneous
  entropy-dissipating systems: The linear {F}okker-{P}lanck equation.
\newblock {\em Communications on Pure and Applied Mathematics}, 54(1):1--42,
  2001.

\bibitem{goudon2005multidimensional}
T.~Goudon, J.~Nieto, F.~Poupaud, and J.~Soler.
\newblock Multidimensional high-field limit of the electrostatic
  {V}lasov--{P}oisson--{F}okker--{P}lanck system.
\newblock {\em Journal of Differential Equations}, 213(2):418--442, 2005.

\bibitem{GMM17}
M.~P. Gualdani, S.~Mischler, and C.~Mouhot.
\newblock Factorization of non-symmetric operators and exponential
  {$H$}-theorem.
\newblock {\em M\'{e}m. Soc. Math. Fr. (N.S.)}, (153):137, 2017.

\bibitem{HV96}
K.~J. Havlak and H.~D. Victory, Jr.
\newblock The numerical analysis of random particle methods applied to
  {V}lasov--{P}oisson {F}okker-{P}lanck kinetic equations.
\newblock {\em SIAM Journal on Numerical Analysis}, 33(1):291--317, 1996.

\bibitem{HV98}
K.~J. Havlak and H.~D. Victory~Jr.
\newblock On deterministic particle methods for solving
  {V}lasov--{P}oisson--{F}okker--{P}lanck systems.
\newblock {\em SIAM Journal on Numerical Analysis}, 35(4):1473--1519, 1998.

\bibitem{heida2018convergences}
M.~Heida.
\newblock Convergences of the squareroot approximation scheme to the
  {F}okker--{P}lanck operator.
\newblock {\em Mathematical Models and Methods in Applied Sciences},
  28(13):2599--2635, 2018.

\bibitem{HJL17}
J.~Hu, S.~Jin, and Q.~Li.
\newblock Asymptotic-preserving schemes for multiscale hyperbolic and kinetic
  equations.
\newblock In {\em Handbook of Numerical Analysis}, volume~18, pages 103--129.
  Elsevier, 2017.

\bibitem{Jin10}
S.~Jin.
\newblock Asymptotic preserving ({AP}) schemes for multiscale kinetic and
  hyperbolic equations: a review.
\newblock {\em Rivista di Matematica della Università di Parma},
  3(2):177--216, 2012.

\bibitem{jin2011asymptotic}
S.~Jin and L.~Wang.
\newblock An asymptotic preserving scheme for the
  {V}lasov-{P}oisson-{F}okker-{P}lanck system in the high field regime.
\newblock {\em Acta Mathematica Scientia}, 31(6):2219--2232, 2011.

\bibitem{jordan1998variational}
R.~Jordan, D.~Kinderlehrer, and F.~Otto.
\newblock The variational formulation of the {F}okker--{P}lanck equation.
\newblock {\em SIAM Journal on Mathematical Analysis}, 29(1):1--17, 1998.

\bibitem{lee2014proximal}
J.~D. Lee, Y.~Sun, and M.~A. Saunders.
\newblock Proximal {N}ewton-type methods for minimizing composite functions.
\newblock {\em SIAM Journal on Optimization}, 24(3):1420--1443, 2014.

\bibitem{li2020fisher}
W.~Li, J.~Lu, and L.~Wang.
\newblock Fisher information regularization schemes for wasserstein gradient
  flows.
\newblock {\em Journal of Computational Physics}, page 109449, 2020.

\bibitem{MN06}
C.~Mouhot and L.~Neumann.
\newblock Quantitative perturbative study of convergence to equilibrium for
  collisional kinetic models in the torus.
\newblock {\em Nonlinearity}, 19(4):969, 2006.

\bibitem{nieto2001high}
J.~Nieto, F.~Poupaud, and J.~Soler.
\newblock High-field limit for the {V}lasov-{P}oisson-{F}okker-{P}lanck system.
\newblock {\em Archive for rational mechanics and analysis}, 158(1):29--59,
  2001.

\bibitem{Peyrebook19}
G.~Peyr{\'e}, M.~Cuturi, et~al.
\newblock Computational optimal transport.
\newblock {\em Foundations and Trends in Machine Learning}, 11(5-6):355--607,
  2019.

\bibitem{Poupaud92}
F.~Poupaud.
\newblock Runaway phenomena and fluid approximation under high fields in
  semiconductor kinetic theory.
\newblock {\em ZAMM-Journal of Applied Mathematics and Mechanics/Zeitschrift
  f{\"u}r Angewandte Mathematik und Mechanik}, 72(8):359--372, 1992.

\bibitem{rein92}
G.~Rein and J.~Weckler.
\newblock Generic global classical solutions of the
  {V}lasov--{P}oisson--{F}okker--{P}lanck system in three dimensions.
\newblock {\em Journal of Differential Equations}, 99(1):59--77, 1992.

\bibitem{santambrogio2015optimal}
F.~Santambrogio.
\newblock Optimal transport for applied mathematicians.
\newblock {\em Birk{\"a}user, NY}, 55(58-63):94, 2015.

\bibitem{schlichting2020scharfetter}
A.~Schlichting and C.~Seis.
\newblock The {S}charfetter--{G}ummel scheme for aggregation-diffusion
  equations.
\newblock {\em arXiv preprint arXiv:2004.13981}, 2020.

\bibitem{BCS97}
J.~Soler, J.~A. Carrillo, and L.~L. Bonilla.
\newblock Asymptotic behavior of an initial-boundary value problem for the
  {V}lasov--{P}oisson--{F}okker--{P}lanck system.
\newblock {\em SIAM Journal on Applied Mathematics}, 57(5):1343--1372, 1997.

\bibitem{VHO90}
H.~D. Victory~Jr and B.~P. O'Dwyer.
\newblock On classical solutions of {V}lasov--{P}oisson--{F}okker--{P}lanck
  systems.
\newblock {\em Indiana University Mathematics Journal}, pages 105--156, 1990.

\bibitem{WO05}
S.~Wollman and E.~Ozizmir.
\newblock Numerical approximation of the
  {V}lasov--{P}oisson--{F}okker--{P}lanck system in one dimension.
\newblock {\em Journal of Computational Physics}, 202(2):602--644, 2005.

\bibitem{ZM94}
Y.~Zheng and A.~Majda.
\newblock Existence of global weak solutions to one-component
  {V}lasov--{P}oisson and {F}okker--{P}lanck-{P}oisson systems in one space
  dimension with measures as initial data.
\newblock {\em Communications on Pure and Applied Mathematics},
  47(10):1365--1401, 1994.

\end{thebibliography}

\end{document}